\documentclass[12pt]{amsart}
\usepackage{amsmath,amsthm,amsfonts,amssymb,xcolor,enumitem,mathscinet}
\usepackage{graphicx}
\usepackage{tikz}
\usepackage{mathrsfs}
\usepackage{mathtools}
\usepackage[pdfpagelabels]{hyperref}
\usepackage[letterpaper,margin=1.1in]{geometry}

\newtheorem{theorem}{Theorem}[section]
\newtheorem{lemma}[theorem]{Lemma}
\newtheorem{corollary}[theorem]{Corollary}
\newtheorem{proposition}[theorem]{Proposition}

\theoremstyle{definition}
\newtheorem{definition}[theorem]{Definition}

\theoremstyle{remark}
\newtheorem{remark}[theorem]{Remark}
\newtheorem{example}[theorem]{Example}

\newcommand{\RR}{\mathbb{R}}
\newcommand{\QQ}{\mathbb{Q}}
\newcommand{\ZZ}{\mathbb{Z}}
\newcommand{\XX}{\mathbb{X}}
\newcommand{\cM}{\mathcal{M}}
\newcommand{\Leb}{\mathscr{L}}

\newcommand{\diam}{\mathop\mathrm{diam}\nolimits}
\newcommand{\side}{\mathop\mathrm{side}\nolimits}
\newcommand{\vol}{\mathop\mathrm{vol}\nolimits}
\newcommand{\res}{\hbox{ {\vrule height .22cm}{\leaders\hrule\hskip.2cm} }}
\newcommand{\gap}{\mathop\mathrm{gap}\nolimits}
\newcommand{\Child}{\mathsf{Child}}
\newcommand{\Collar}{\mathsf{Collar}}
\newcommand{\var}{\mathop\mathrm{var}\nolimits}

\numberwithin{equation}{section}

\begin{document}

\title{Method I revisited: extension, continuity, and applications}
\thanks{The author was partially supported by NSF DMS grant 2154047.}
\date{September 24, 2026}
\subjclass[2020]{Primary 28A12. Secondary 28A25, 28A35, 28A78, 28C15.}
\keywords{outer measures, Method I, extension of measures, Lebesgue--Stieltjes measures, Riesz representation theorem, product measures, Kolmogorov extension theorem, mass distributions, Frostman's lemma}
\author{Matthew Badger}
\address{Department of Mathematics\\ University of Connecticut\\ Storrs, CT 06269-1009}
\email{matthew.badger@uconn.edu}

\begin{abstract}
Carath\'eodory's construction, also known as Method I, turns any weight on a family of sets into an outer measure. The difficulty is the extension problem: showing that the outer measure retains the prescribed weights. We record elementary properties of Method I, beginning with the fact that countable subadditivity on the covering family is exactly the criterion for faithful extension, and use them to prove standard results with only outer measures, $\sigma$-algebras, and measures, without premeasures on algebras. In topological spaces, a continuity principle reduces this criterion to finite subadditivity and finite approximation by open and compact sets. Applications include the Lebesgue integral as a measure, Tonelli's theorem, multidimensional Lebesgue--Stieltjes measures, Riesz representation for vector-valued functionals, Kolmogorov extension, mass distributions on nested partitions, and Frostman's lemma.
\end{abstract}

\maketitle

\setcounter{tocdepth}{1}
\tableofcontents

\section{Introduction}\label{s:intro}

Many constructions in measure theory begin with a nonnegative weight on a simple family of sets. Carath\'eodory's covering formula \cite{Caratheodory1914,Caratheodory1918}, which we call \emph{Method I} following Munroe \cite{Munroe} and Rogers \cite{Rogers}, turns such a weight into an outer measure automatically. The primary difficulty is the \emph{extension problem}: does the outer measure retain the prescribed values? For Lebesgue measure, the real work is proving that the outer measure assigns each rectangle its volume, a compactness argument that converts finite geometric information into countable subadditivity. Such verifications are sometimes compressed or delegated to exercises even though they contain the main construction.

Our approach is to pause after the covering formula and record the properties needed to solve the extension problem directly. Proposition \ref{p:method-one}(E) says that a weight $w$ on an arbitrary family $\mathcal E$ is preserved by the Method I outer measure if and only if $w$ is countably subadditive on $\mathcal E$. The rest of Proposition \ref{p:method-one} supplies measurability, approximation, and regularity, and maximality gives uniqueness under $\sigma$-finiteness. Remark \ref{r:caratheodory-hahn} recovers the Carath\'eodory--Hahn extension theorem and shows that the algebra and the premeasure are used only to verify these properties. Many standard constructions can then be carried out with outer measures, $\sigma$-algebras, and measures alone, without premeasures on algebras and without $\pi$-$\lambda$ or monotone class theorems.

Countable subadditivity must be checked against arbitrary countable covers. In topological spaces, the continuity principle, Lemma \ref{l:continuity}, reduces this to finite information: finite subadditivity, together with finite approximation from outside by open sets and from inside by compact sets, implies countable subadditivity.

We use this extension-first organization in several standard constructions. For an arbitrary nonnegative function $f$, the set function $A\mapsto\int_A f\,d\mu$ is shown directly to be a measure. Rectangle weights give product measures, Tonelli's theorem, and multidimensional Lebesgue--Stieltjes measures. A locally bounded linear functional on compactly supported vector-valued continuous functions gives a weight on open sets, from which the Riesz representation theorem follows. Cylinder weights give Kolmogorov's extension theorem. Compatible masses on nested partitions give measures with prescribed values, and localized net contents yield Frostman's lemma without weak limits of measures. These results are not offered as replacements for every use of premeasures or functional methods, but they show that a substantial collection of basic constructions can be proved from the Method I extension principle.

This project began during the author's sabbatical in Fall 2020 while preparing notes for a graduate measure theory course at the University of Connecticut. The initial aim was to organize Carath\'eodory's covering construction as a direct method for solving extension problems. Later, a mass-distribution argument written with Schul \cite{BS4} used a compactness step to pass from finite to countable information. Comparing that argument with the standard construction of Lebesgue measure led to the continuity principle used below. The author's perspective on measure theory was shaped in particular by Folland \cite{Folland}, Royden \cite{Royden}, Evans and Gariepy \cite{EvansGariepy}, Mattila \cite{Mattila}, and Rogers \cite{Rogers}.

\section{Method I outer measures}\label{s:method-one}

Let $\XX$ be a nonempty set. A \emph{measure} on a $\sigma$-algebra $\cM$ on $\XX$ is a countably additive function $\mu:\cM\to[0,\infty]$ with $\mu(\emptyset)=0$. An \emph{outer measure} on $\XX$ is a function $\mu:\mathcal P(\XX)\to[0,\infty]$ with $\mu(\emptyset)=0$ that is monotone and countably subadditive, i.e.~$\mu(A)\leq\sum_{i=1}^\infty\mu(A_i)$ whenever $A\subset\bigcup_{i=1}^\infty A_i$. For every set $A\subset\XX$, the \emph{restriction} $\mu\res A$ of $\mu$ to $A$ is the outer measure defined by $\mu\res A(S)=\mu(S\cap A)$ for all $S\subset\XX$. A set $A\subset\XX$ is \emph{$\mu$ measurable} if
\[
 \mu=\mu\res A+\mu\res(\XX\setminus A),
\]
i.e.~if $\mu(S)=\mu(S\cap A)+\mu(S\setminus A)$ for all $S\subset\XX$.

For a nonempty family $\mathcal E\subset\mathcal P(\XX)$, let $\mathcal E_\sigma$ and $\mathcal E_\delta$ denote the families of countable unions and countable intersections of sets in $\mathcal E$, respectively, and put $\mathcal E_{\sigma\delta}:=(\mathcal E_\sigma)_\delta$.

\begin{lemma}[Method I]\label{l:method-one}
Let $\XX$ be a nonempty set and let $\mathcal E\subset\mathcal P(\XX)$ be a family of sets with $\emptyset\in\mathcal E$. For every function $w:\mathcal E\to[0,\infty]$ with $w(\emptyset)=0$, the function $\mu_w:\mathcal P(\XX)\to[0,\infty]$ defined by
\begin{equation}\label{method-one}
 \mu_w(A):=\inf\left\{\sum_{i=1}^\infty w(E_i):E_1,E_2,\dots\in\mathcal E,\ A\subset\bigcup_{i=1}^\infty E_i\right\}\quad\text{for all }A\subset\XX
\end{equation}
is an outer measure on $\XX$, where $\inf\emptyset=\infty$.
\end{lemma}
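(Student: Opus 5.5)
The plan is to verify the three defining properties of an outer measure directly from the covering formula \eqref{method-one}: $\mu_w(\emptyset)=0$, monotonicity, and countable subadditivity. Values lie in $[0,\infty]$ because every sum $\sum_i w(E_i)$ has nonnegative terms, and the convention $\inf\emptyset=\infty$ handles sets admitting no countable cover by members of $\mathcal E$.

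For $\mu_w(\emptyset)=0$, I take the admissible cover $E_i=\emptyset$ for all $i$. This is allowed because $\emptyset\in\mathcal E$, and it gives $\sum_i w(\emptyset)=0$; hence $\mu_w(\emptyset)\le 0$. For monotonicity, if $A\subset B$ then every countable cover of $B$ by sets in $\mathcal E$ is also a cover of $A$. So the set of sums defining $\mu_w(B)$ is contained in the set defining $\mu_w(A)$, and the infimum over the larger set is smaller: $\mu_w(A)\le\mu_w(B)$. This holds even when the sets of covers are empty.

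Countable subadditivity is the only step with content, and it is the standard $\varepsilon/2^j$ argument. Suppose $A\subset\bigcup_{j}A_j$. If $\sum_j\mu_w(A_j)=\infty$ there is nothing to prove, so assume every $\mu_w(A_j)<\infty$. Fix $\varepsilon>0$. For each $j$, the set of covers of $A_j$ is nonempty, so I can choose $E_{j,1},E_{j,2},\dots\in\mathcal E$ with $A_j\subset\bigcup_i E_{j,i}$ and
\[
\sum_i w(E_{j,i})\le\mu_w(A_j)+\varepsilon2^{-j}.
\]
The doubly indexed family $\{E_{j,i}\}_{i,j}$ is countable. Enumerating it as a single sequence gives an admissible cover of $A$, and rearranging a series of nonnegative terms does not change its sum. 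Therefore
\[
\mu_w(A)\le\sum_j\sum_i w(E_{j,i})\le\sum_j\mu_w(A_j)+\varepsilon.
\]
Letting $\varepsilon\to0$ gives the claim.

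The main care is bookkeeping rather than a genuine obstacle. The reindexing of a double series of nonnegative extended reals is justified by Tonelli for counting measure, or by the elementary fact that sums of nonnegative terms equal the supremum of finite partial sums. The choice of near-optimal covers uses countable choice. The case of infinite values must be dispatched first so that the $\varepsilon2^{-j}$ approximation makes sense.
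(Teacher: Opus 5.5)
Your proposal is correct and follows the same route as the paper. The all-empty cover gives $\mu_w(\emptyset)=0$, covers of $B$ are covers of $A$ for monotonicity, and near-optimal covers of each $A_j$ with error $2^{-j}\varepsilon$ are combined into a single cover of $A$ for countable subadditivity, with your remarks on reindexing and countable choice being bookkeeping the paper leaves implicit.
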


\begin{proof}
The cover $\emptyset,\emptyset,\dots$ gives $\mu_w(\emptyset)=0$. If $A\subset B$, then every cover of $B$ is a cover of $A$, and $\mu_w(A)\leq\mu_w(B)$. Let $A\subset\bigcup_{j=1}^\infty A_j$, and assume that $\sum_j\mu_w(A_j)<\infty$. Given $\varepsilon>0$, choose covers $E^j_1,E^j_2,\dots\in\mathcal E$ of $A_j$ with $\sum_iw(E^j_i)\leq\mu_w(A_j)+2^{-j}\varepsilon$. Together, they cover $A$, and $\mu_w(A)\leq\sum_j\mu_w(A_j)+\varepsilon$.
\end{proof}

We call $w$ a \emph{weight} and $\mu_w$ the \emph{Method I outer measure} with weight $w$.

\begin{proposition}[properties of Method I outer measures]\label{p:method-one}
Let $\XX$, $\mathcal E$, $w$, and $\mu_w$ be as in Lemma \ref{l:method-one}.
\begin{enumerate}[leftmargin=3.2em,labelwidth=2.5em,labelsep=.7em]
\item[\textup{(E)}] (extension) If $w$ is countably subadditive on $\mathcal E$, i.e.
\[
 E,E_1,E_2,\dots\in\mathcal E\quad\text{and}\quad E\subset\bigcup_{i=1}^\infty E_i\quad\text{imply}\quad w(E)\leq\sum_{i=1}^\infty w(E_i),
\]
then $\mu_w(E)=w(E)$ for all $E\in\mathcal E$.
\item[\textup{(M)}] (measurability) Suppose that for all $E,E'\in\mathcal E$, there are $F_0,F_1,\dots,F_p\in\mathcal E$ such that
\[
 E'\cap E=F_0,\qquad E'\setminus E=F_1\cup\dots\cup F_p,\quad\text{and}\quad\sum_{j=0}^pw(F_j)\leq w(E').
\]
Then every set in $\mathcal E$ is $\mu_w$ measurable.
\item[\textup{(OA)}] (outer approximation) For every $S\subset\XX$,
\[
 \mu_w(S)=\inf\{\mu_w(B):S\subset B,\ B\in\mathcal E_\sigma\}.
\]
\item[\textup{(IA)}] (inner approximation) For every $\mu_w$ measurable set $A\subset\XX$ with $\mu_w(A)<\infty$ and every $\varepsilon>0$, there are $B,C\in\mathcal E_\sigma$ such that
\[
 B\setminus C\subset A\qquad\text{and}\qquad
 \mu_w\bigl(A\setminus(B\setminus C)\bigr)<\varepsilon.
\]
\item[\textup{(OR)}] (outer regularity) If $S\subset\XX$ and $\mu_w(S)<\infty$, there is $B\in\mathcal E_{\sigma\delta}$ with $S\subset B$ and $\mu_w(B)=\mu_w(S)$. If $\XX\in\mathcal E_\sigma$, the same conclusion holds for every $S\subset\XX$.
\item[\textup{(IR)}] (inner regularity) For every $\mu_w$ measurable set $A\subset\XX$ with $\mu_w(A)<\infty$, there are $C_1,C_2\in\mathcal E_{\sigma\delta}$ such that $C_1\setminus C_2\subset A$, $\mu_w(C_1\setminus C_2)=\mu_w(C_1)=\mu_w(A)$, and $\mu_w(C_2)=0$.
\end{enumerate}
\end{proposition}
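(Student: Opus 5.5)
I will prove the six items in order, since each uses only the definition \eqref{method-one} and Lemma \ref{l:method-one}, and the later items build on the earlier ones.

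For (E), I note that $\mu_w(E)\le w(E)$ by taking the cover $E,\emptyset,\emptyset,\dots$. Conversely, every cover $E\subset\bigcup_i E_i$ by sets of $\mathcal E$ has $w(E)\le\sum_i w(E_i)$ by hypothesis. Taking the infimum over all such covers gives $w(E)\le\mu_w(E)$.

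For (M), I fix $E\in\mathcal E$ and $S\subset\XX$. It suffices to show $\mu_w(S\cap E)+\mu_w(S\setminus E)\le\mu_w(S)$ when $\mu_w(S)<\infty$. Given $\varepsilon>0$, I choose a cover $E'_1,E'_2,\dots\in\mathcal E$ of $S$ with $\sum_i w(E'_i)\le\mu_w(S)+\varepsilon$. The hypothesis applied to each pair $(E,E'_i)$ yields sets $F^i_0,\dots,F^i_{p_i}\in\mathcal E$. The sets $F^i_0$ cover $S\cap E$, and the sets $F^i_j$ with $j\ge1$ cover $S\setminus E$. Therefore
\[
\mu_w(S\cap E)+\mu_w(S\setminus E)\le\sum_i\sum_{j=0}^{p_i}w(F^i_j)\le\sum_i w(E'_i)\le\mu_w(S)+\varepsilon .
\]

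For (OA), the inequality ``$\le$'' is monotonicity. For ``$\ge$'', given a near-optimal cover $(E_i)$ of $S$, I put $B=\bigcup_iE_i\in\mathcal E_\sigma$. The same cover shows $\mu_w(B)\le\sum_i w(E_i)\le\mu_w(S)+\varepsilon$.

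For (OR), when $\mu_w(S)<\infty$ I take $B_n\in\mathcal E_\sigma$ from (OA) with $S\subset B_n$ and $\mu_w(B_n)<\mu_w(S)+1/n$. Then $B=\bigcap_nB_n\in\mathcal E_{\sigma\delta}$ contains $S$ and satisfies $\mu_w(B)=\mu_w(S)$. If $\mu_w(S)=\infty$ and $\XX\in\mathcal E_\sigma$, I take $B=\XX$.

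For (IA), I apply (OA) to $A$ to get $B\in\mathcal E_\sigma$ with $A\subset B$ and $\mu_w(B)<\mu_w(A)+\varepsilon$. Because $A$ is measurable and $\mu_w(A)<\infty$, this gives $\mu_w(B\setminus A)=\mu_w(B)-\mu_w(A)<\varepsilon$. Next I apply (OA) to $B\setminus A$ to get $C\in\mathcal E_\sigma$ with $B\setminus A\subset C$ and $\mu_w(C)<\varepsilon$. Then $B\setminus C\subset A$. Since $A\setminus(B\setminus C)=A\cap C\subset C$, we get $\mu_w\bigl(A\setminus(B\setminus C)\bigr)<\varepsilon$.

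For (IR), I apply (OR) to $A$ to get $C_1\in\mathcal E_{\sigma\delta}$ with $A\subset C_1$ and $\mu_w(C_1)=\mu_w(A)$. Measurability of $A$ and finiteness give $\mu_w(C_1\setminus A)=0$. Applying (OR) to $C_1\setminus A$ gives $C_2\in\mathcal E_{\sigma\delta}$ containing $C_1\setminus A$ with $\mu_w(C_2)=0$. Then $C_1\setminus C_2\subset A$. Since $C_2$ is null, $\mu_w(C_1\setminus C_2)=\mu_w(C_1)$ follows from $\mu_w(C_1)\le\mu_w(C_1\setminus C_2)+\mu_w(C_2)$ together with monotonicity.

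The only point needing care is (M). There one must check that the decompositions of the different $E'_i$ together give countable covers of $S\cap E$ and $S\setminus E$. This holds because each decomposition is finite, so all the $F^i_j$ can be enumerated as a single sequence. The remaining items are routine $\varepsilon$-arguments, with measurability used only to subtract finite measures.
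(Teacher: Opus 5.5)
Your proof is correct and follows the paper's argument item by item. The only differences are cosmetic. In (M) you use the sets $F^i_j$ directly as covers of $S\cap E$ and $S\setminus E$, whereas the paper passes through $\mu_w(E_i\cap E)+\mu_w(E_i\setminus E)$ and $\mu_w(F)\le w(F)$. In (IA) you use $\varepsilon$ where the paper uses $\varepsilon/2$; this is fine, since the strict inequality $\mu_w(B\setminus A)<\varepsilon$ still lets (OA) produce $C$ with $\mu_w(C)<\varepsilon$.
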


\begin{proof}
Suppose that $w$ is countably subadditive on $\mathcal E$, and let $E\in\mathcal E$. Every countable cover of $E$ by sets in $\mathcal E$ has total weight at least $w(E)$. Hence $w(E)\leq\mu_w(E)$. The cover $E,\emptyset,\emptyset,\dots$ gives $\mu_w(E)\leq w(E)$, which gives (E).

Assume the hypothesis of (M), and let $E\in\mathcal E$ and $S\subset\XX$. Let $E_1,E_2,\dots\in\mathcal E$ cover $S$, and for each $i$ let $F^i_0,\dots,F^i_{p_i}$ be given by the hypothesis with $E'=E_i$. Since $\mu_w(F)\leq w(F)$ for all $F\in\mathcal E$,
\[
 \mu_w(S\cap E)+\mu_w(S\setminus E)\leq\sum_{i=1}^\infty\bigl(\mu_w(E_i\cap E)+\mu_w(E_i\setminus E)\bigr)\leq\sum_{i=1}^\infty\sum_{j=0}^{p_i}w(F^i_j)\leq\sum_{i=1}^\infty w(E_i).
\]
Taking the infimum over all covers gives $\mu_w(S\cap E)+\mu_w(S\setminus E)\leq\mu_w(S)$. The reverse inequality holds by subadditivity, and $E$ is $\mu_w$ measurable.

For (OA), if $S\subset B\in\mathcal E_\sigma$, then $\mu_w(S)\leq\mu_w(B)$. Conversely, every cover $S\subset\bigcup_iE_i$ with $E_i\in\mathcal E$ gives $B:=\bigcup_iE_i\in\mathcal E_\sigma$ and
\[
 \mu_w(B)\leq\sum_iw(E_i).
\]
Taking infima proves (OA).

For (IA), let $A$ be $\mu_w$ measurable with $\mu_w(A)<\infty$, and let $\varepsilon>0$. By (OA), choose $B\in\mathcal E_\sigma$ with $A\subset B$ and $\mu_w(B)<\mu_w(A)+\varepsilon/2$. Since $A$ is $\mu_w$ measurable, $\mu_w(B\setminus A)<\varepsilon/2$. Applying (OA) again, choose $C\in\mathcal E_\sigma$ with $B\setminus A\subset C$ and $\mu_w(C)<\varepsilon$. Then $B\setminus C\subset A$ and $A\setminus(B\setminus C)=A\cap C\subset C$, which gives (IA).

If $\mu_w(S)<\infty$, then by (OA), for every $n\geq1$ choose $A_n\in\mathcal E_\sigma$ with $S\subset A_n$ and $\mu_w(A_n)\leq\mu_w(S)+1/n$. Then
\[
 B:=\bigcap_{n=1}^\infty A_n\in\mathcal E_{\sigma\delta},\qquad S\subset B,
\]
and $\mu_w(S)\leq\mu_w(B)\leq\mu_w(A_n)\leq\mu_w(S)+1/n$ for every $n$. If $\mu_w(S)=\infty$ and $\XX\in\mathcal E_\sigma$, take $B=\XX$. This proves (OR).

Let $A$ be $\mu_w$ measurable with $\mu_w(A)<\infty$. By the finite-measure case of (OR), choose $C_1\in\mathcal E_{\sigma\delta}$ with $A\subset C_1$ and $\mu_w(C_1)=\mu_w(A)$. Since $A$ is $\mu_w$ measurable,
\[
 \mu_w(C_1)=\mu_w(A)+\mu_w(C_1\setminus A),
\]
so $\mu_w(C_1\setminus A)=0$. Applying (OR) once more, choose $C_2\in\mathcal E_{\sigma\delta}$ with $C_1\setminus A\subset C_2$ and $\mu_w(C_2)=0$. Then $C_1\setminus C_2\subset A$ and
\[
 \mu_w(C_1)\leq\mu_w(C_1\setminus C_2)+\mu_w(C_2)=\mu_w(C_1\setminus C_2)\leq\mu_w(C_1),
\]
which gives (IR).
\end{proof}

\begin{remark}[maximal extension]\label{r:extension}
Proposition \ref{p:method-one}(E) is a characterization. If an outer measure $\nu$ satisfies $\nu(E)=w(E)$ for every $E\in\mathcal E$, then $w$ is countably subadditive on $\mathcal E$. Moreover, every cover $S\subset\bigcup_iE_i$ gives
\[
 \nu(S)\leq\sum_i\nu(E_i)=\sum_iw(E_i),
\]
so $\nu\leq\mu_w$. Thus, $w$ extends to an outer measure if and only if it is countably subadditive, and in that case $\mu_w$ is the largest outer measure extending $w$. Countable subadditivity already includes monotonicity by using the cover $E',\emptyset,\emptyset,\dots$ when $E\subset E'$.
\end{remark}

\begin{remark}[measurability criterion]\label{r:measurability} The following characterization of $\mu_w$ measurability holds for every weight $w$. A set $A\subset\XX$ is $\mu_w$ measurable if and only if
\begin{equation}\label{measurability-test}
 \mu_w(E\cap A)+\mu_w(E\setminus A)\leq w(E)\quad\text{for all }E\in\mathcal E.
\end{equation}
If $A$ is $\mu_w$ measurable, then the left side of \eqref{measurability-test} equals $\mu_w(E)\leq w(E)$. Conversely, if \eqref{measurability-test} holds and $E_1,E_2,\dots\in\mathcal E$ cover $S\subset\XX$, then
\[
 \mu_w(S\cap A)+\mu_w(S\setminus A)\leq\sum_{i=1}^\infty\bigl(\mu_w(E_i\cap A)+\mu_w(E_i\setminus A)\bigr)\leq\sum_{i=1}^\infty w(E_i),
\]
and taking the infimum over covers gives $\mu_w(S\cap A)+\mu_w(S\setminus A)\leq\mu_w(S)$. The hypothesis of Proposition \ref{p:method-one}(M) is one convenient way to verify \eqref{measurability-test} when $A\in\mathcal E$. It is modeled on the semiring axioms, which require $E'\cap E\in\mathcal E$ and $E'\setminus E$ to be a finite disjoint union of sets in $\mathcal E$. Proposition \ref{p:method-one}(M) asks for less. The sets $F_1,\dots,F_p$ need not be disjoint, only an inequality between weights is required, and \eqref{measurability-test} allows arbitrary countable covers. Thus, measurability of the sets in $\mathcal E$ is again a property of the outer measure $\mu_w$, checked on $\mathcal E$, rather than a structural requirement on the family $\mathcal E$.
\end{remark}

An outer measure $\nu$ on $\XX$ is \emph{regular} if for every $S\subset\XX$ there is a $\nu$ measurable set $A\supset S$ such that $\nu(A)=\nu(S)$. A measure $\mu$ on a measurable space $(\XX,\cM)$ is \emph{complete} if every subset of a $\mu$ null set in $\cM$ belongs to $\cM$. The \emph{completion} $(\XX,\overline{\cM},\overline\mu)$ of $(\XX,\cM,\mu)$ is obtained by adjoining all subsets of measurable $\mu$ null sets: $\overline{\cM}$ consists of the sets $A\cup N'$ with $A\in\cM$ and $N'\subset N$ for some $N\in\cM$ with $\mu(N)=0$, and $\overline\mu(A\cup N')=\mu(A)$. We say that $S\subset\XX$ is \emph{$\mu$-locally in $\cM$} if
\[
 A\in\cM\quad\text{and}\quad \mu(A)<\infty
 \qquad\Longrightarrow\qquad
 S\cap A\in\cM,
\]
and that $\mu$ is \emph{saturated} if every set that is $\mu$-locally in $\cM$ belongs to $\cM$.\footnote{The standard phrase is ``locally $\mu$ measurable''. See, for example, Folland \cite[Exercise 1.16]{Folland} and Royden \cite{Royden}.} The sets that are $\mu$-locally in $\cM$ form a $\sigma$-algebra containing $\cM$. The \emph{saturation} of $\mu$ is the extension of $\mu$ to this $\sigma$-algebra that assigns the value $\infty$ to every set not in $\cM$. It is a measure: if pairwise disjoint sets $A_1,A_2,\dots$ that are $\mu$-locally in $\cM$ have union $A\in\cM$ with $\mu(A)<\infty$, then every $A_i=A_i\cap A$ belongs to $\cM$. Every $\sigma$-finite measure is saturated. We will need the saturation of the completion below.

If $\nu$ is an outer measure, let $\cM_\nu$ denote the class of $\nu$ measurable sets. If $\mu$ is a measure on a $\sigma$-algebra $\cM$, let $\mu^*$ be the Method I outer measure with $\mathcal E=\cM$ and $w=\mu$. Put
\[
 \nu^-:=\nu|_{\cM_\nu}\qquad\text{and}\qquad\mu^+:=(\mu^*)^-.
\]
Thus, the superscript $-$ restricts an outer measure to its measurable sets, while the superscript $+$ enlarges the $\sigma$-algebra of a measure by completion and saturation.

\begin{theorem}[the big correspondence]\label{t:measure-outer-correspondence}
Let $\XX$ be a nonempty set.
\begin{enumerate}
\item (Carath\'eodory) For every outer measure $\nu$ on $\XX$, the class $\cM_\nu$ is a $\sigma$-algebra and $\nu^-$ is a complete measure.

\item If $\nu$ is regular, then $\nu^-$ is saturated and
\[
 (\nu^-)^*=\nu.
\]

\item If $\mu$ is a measure on a measurable space $(\XX,\cM)$, then $\mu^*$ is a regular outer measure, every set in $\cM$ is $\mu^*$ measurable, $\mu^*|_\cM=\mu$, and $\mu^*$ is the largest outer measure extending $\mu$. Moreover, $\mu^+$ is the saturation of the completion of $\mu$.

\item Consequently, the maps $\mu\mapsto\mu^*$ and $\nu\mapsto\nu^-$ are inverse to one another on the classes
\[
 \left\{\begin{array}{c}
 \text{complete, saturated measures}\\[-1mm]
 \text{on measurable spaces over }\XX
 \end{array}\right\}
 \longleftrightarrow
 \left\{\begin{array}{c}
 \text{regular outer measures}\\[-1mm]
 \text{on }\XX
 \end{array}\right\}.
\]

\item Restricting to $\sigma$-finite objects gives
\[
 \left\{\begin{array}{c}
 \sigma\text{-finite complete measures}\\[-1mm]
 \text{on measurable spaces over }\XX
 \end{array}\right\}
 \longleftrightarrow
 \left\{\begin{array}{c}
 \sigma\text{-finite regular outer measures}\\[-1mm]
 \text{on }\XX
 \end{array}\right\},
\]
where a regular outer measure is called $\sigma$-finite when its induced measure is $\sigma$-finite. In particular, if $\mu$ is $\sigma$-finite, then $\mu^+$ is the completion of $\mu$.
\end{enumerate}
\end{theorem}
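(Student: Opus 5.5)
I would prove the five parts in order, leaning on Lemma \ref{l:method-one} and Proposition \ref{p:method-one} wherever possible.

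\emph{Part (1).} This is the classical Carath\'eodory argument. First, $\cM_\nu$ contains $\emptyset$ and is closed under complements, since the defining identity is symmetric in $A$ and $\XX\setminus A$. Next I show closure under finite unions by splitting a test set $S$ four ways. Then I show that for disjoint measurable $A_1,\dots,A_n$ and any $S$,
\[
\nu\Bigl(S\cap\bigcup_{i\le n}A_i\Bigr)=\sum_{i\le n}\nu(S\cap A_i).
\]
For countable disjoint unions I pass to the limit: monotonicity and countable subadditivity squeeze $\nu(S)$ between $\sum_i\nu(S\cap A_i)+\nu(S\setminus A)$ and $\nu(S)$. Taking $S=A$ gives countable additivity. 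Completeness holds because any $N$ with $\nu(N)=0$ satisfies the test trivially.

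\emph{Part (3).} With $\mathcal E=\cM$ and $w=\mu$, countable additivity plus monotonicity of $\mu$ gives countable subadditivity on $\cM$. So (E) and Remark \ref{r:extension} give $\mu^*|_\cM=\mu$ and maximality. Since $\cM$ is closed under intersection and difference, the hypothesis of (M) holds with $p=1$, so $\cM\subset\cM_{\mu^*}$. Regularity comes from (OR), with $\XX\in\cM=\mathcal E_\sigma$; the set $B$ lies in $\mathcal E_{\sigma\delta}=\cM$.

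To identify $\mu^+$, I take $A\in\cM_{\mu^*}$. If $\mu^*(A)<\infty$, (IR) gives $C_1\setminus C_2\subset A\subset B$ with $B$ from (OR), all sets in $\cM$, and the differences null. Hence $A$ is in the completion. For general $A$ and any $E\in\cM$ with $\mu(E)<\infty$, the set $A\cap E$ is in the completion, so $A$ is locally in the completion. Conversely, every set locally in the completion is $\mu^*$ measurable. To see this, test against $S$ with $\mu^*(S)<\infty$ and cover $S$ by $B\in\cM$ of finite measure from (OR). Then $A\cap B$ is in the completion, hence measurable, so $\nu(S\cap A)+\nu(S\setminus A)=\nu(S)$. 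The values agree: they equal $\infty$ off the completion, because a set of finite outer measure that is $\mu^*$ measurable lies in the completion.

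\emph{Part (2).} Let $\mu=\nu^-$. Because $\mu^*$ is the infimum over covers by measurable sets, $\nu\le\mu^*$. Regularity supplies a single measurable $A\supset S$ with $\nu(A)=\nu(S)$, which gives equality. For saturation, let $A$ be locally in $\cM_\nu$ and take a test set $S$ with $\nu(S)<\infty$. Choose a measurable hull $H\supset S$ with $\nu(H)=\nu(S)<\infty$. Then $A\cap H\in\cM_\nu$, and splitting $S$ by $A\cap H$ gives the Carath\'eodory identity for $A$.

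\emph{Parts (4) and (5).} For (4), (2) gives $(\nu^-)^*=\nu$. For $\mu$ complete and saturated, (3) says $\mu^+$ is the saturation of the completion, which is $\mu$ itself, so $(\mu^*)^-=\mu$. For (5), $\sigma$-finite measures are saturated, and the correspondence preserves $\sigma$-finiteness by definition.

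I expect the identification of $\mu^+$ in (3) to be the main obstacle, specifically handling sets of infinite measure and checking that the saturation's value $\infty$ agrees with $\mu^*$. I would settle that step first.
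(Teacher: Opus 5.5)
Your proposal is correct and takes essentially the same route as the paper: (E), (M), (OR) and (IR) with $\mathcal E=\cM$ give part (3), a regular measurable hull handles both halves of part (2), and parts (4)--(5) follow from (1)--(3). The differences are cosmetic: you sketch Carath\'eodory's theorem instead of citing it, you test $S$ directly against $A\cap H$ in (2), and you sandwich $A$ between the (IR) and (OR) sets in (3). You should state explicitly that $\overline{\cM}\subset\cM_{\mu^*}$, which holds for sets in $\cM$ by (M) and for null sets by part (1), because your identification of $\mu^+$ uses it.
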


\begin{proof}
Part (1) is Carath\'eodory's theorem. See, for example, Rogers \cite[Chapter 1]{Rogers} or Folland \cite[Chapter 1]{Folland}.

Suppose that $\nu$ is regular and that $S$ is $\nu^-$-locally in $\cM_\nu$. To verify the Carath\'eodory inequality, it is enough to consider $F\subset\XX$ with $\nu(F)<\infty$. Choose $A\in\cM_\nu$ with $F\subset A$ and $\nu(A)=\nu(F)$. Since $S\cap A\in\cM_\nu$,
\[
 \nu(F)=\nu(A)=\nu(A\cap S)+\nu(A\setminus S)
 \geq \nu(F\cap S)+\nu(F\setminus S).
\]
The reverse inequality is subadditivity. Thus, $S\in\cM_\nu$, and $\nu^-$ is saturated. Since $\nu$ extends $\nu^-$, maximality in Remark \ref{r:extension} gives $\nu\leq(\nu^-)^*$. Conversely, for $S\subset\XX$, regularity gives $A\in\cM_\nu$ with $S\subset A$ and $\nu(A)=\nu(S)$, whence
$ (\nu^-)^*(S)\leq(\nu^-)^*(A)=\nu^-(A)=\nu(S). $
This proves (2).

For (3), the measure $\mu$ is countably subadditive on $\cM$, and the sets $F_0=E'\cap E$ and $F_1=E'\setminus E$ satisfy the hypothesis of Proposition \ref{p:method-one}(M). Hence Proposition \ref{p:method-one}(E) and (M) give extension and measurability, Remark \ref{r:extension} gives maximality, and Proposition \ref{p:method-one}(OR), applied with $\mathcal E=\cM$, gives regularity because $\XX\in\cM$. Let $(\XX,\overline{\cM},\overline\mu)$ be the completion of $(\XX,\cM,\mu)$. Every set in $\overline{\cM}$ is $\mu^*$ measurable: sets in $\cM$ are measurable by Proposition \ref{p:method-one}(M), and every subset of a $\mu^*$ null set is measurable by part (1).

We claim that $\cM_{\mu^*}$ is exactly the saturation of $\overline{\cM}$. First let $A\in\cM_{\mu^*}$ and let $B\in\cM$ with $\mu(B)<\infty$. The set $A\cap B$ is $\mu^*$ measurable and has finite outer measure. Proposition \ref{p:method-one}(IR), again with $\mathcal E=\cM$, gives $C_1,C_2\in\cM$ such that
\[
 D:=C_1\setminus C_2\subset A\cap B,\qquad
 \mu^*(D)=\mu^*(A\cap B),\qquad \mu(C_2)=0.
\]
Hence $(A\cap B)\setminus D$ is $\mu^*$ null. By Proposition \ref{p:method-one}(OR), it is contained in a set $N\in\cM$ with $\mu(N)=0$. Thus, $A\cap B\in\overline{\cM}$. The same follows for every finite-measure $B\in\overline{\cM}$ after changing $B$ by a measurable null set. Therefore, $A$ is $\overline\mu$-locally in $\overline{\cM}$.

Conversely, suppose that $A$ is $\overline\mu$-locally in $\overline{\cM}$. Let $S\subset\XX$ with $\mu^*(S)<\infty$. By regularity, choose $B\in\cM$ with $S\subset B$ and $\mu(B)=\mu^*(S)$. Then $A\cap B\in\overline{\cM}$, hence is $\mu^*$ measurable, and
\[
 \mu^*(S) = \mu^*(B)
 =\mu^*(B\cap A)+\mu^*(B\setminus A)
 \geq \mu^*(S\cap A)+\mu^*(S\setminus A).
\]
Subadditivity gives the reverse inequality, and the case $\mu^*(S)=\infty$ is automatic. Thus, $A\in\cM_{\mu^*}$. Finally, a set in this saturation that is not in $\overline{\cM}$ must have $\mu^*$ measure $\infty$, since otherwise the preceding regularity argument would place it in $\overline{\cM}$. Hence $\mu^+$ is precisely the saturation of the completion.

If $\mu$ is complete and saturated, then $\mu^+=\mu$, so (3) gives $(\mu^*)^-=\mu$. Together with (2), this proves (4). If $\mu$ is $\sigma$-finite, its completion is $\sigma$-finite and saturated, so (3) gives the last assertion of (5). Conversely, the induced measure of a $\sigma$-finite regular outer measure is $\sigma$-finite by definition. This proves (5).
\end{proof}

Munroe \cite[Theorems 12.3 and 12.4]{Munroe} records parts of Proposition \ref{p:method-one} and Theorem \ref{t:measure-outer-correspondence}. Theorem 12.3 contains the $\mathcal E_{\sigma\delta}$ outer regularity in (OR), and Theorem 12.4 treats extension, regularity, completion, and reconstruction when the weight is already a measure. Proposition \ref{p:method-one}(E) and Remark \ref{r:extension} add that an arbitrary weight extends faithfully exactly when it is countably subadditive on the covering family. The regularity hypothesis in Theorem \ref{t:measure-outer-correspondence}(2) cannot be dropped. For every outer measure $\nu$, maximality in Remark \ref{r:extension} gives only $(\nu^-)^*\geq\nu$, and Kim \cite{Kim1985} gives examples in which the inequality is strict.

Given a family $\mathcal E$ of subsets of $\XX$, write $\cM(\mathcal E)$ for the $\sigma$-algebra generated by $\mathcal E$.
Call a weight $w$ on $\mathcal E$ \emph{$\sigma$-finite} if $\XX$ is covered by sets $E_1,E_2,\dots\in\mathcal E$ with $w(E_i)<\infty$ for every $i$.

\begin{lemma}[uniqueness]\label{l:uniqueness}
Let $\mathcal E$, $w$, and $\mu_w$ be as in Lemma \ref{l:method-one}, and suppose that $w$ satisfies the hypotheses of Proposition \ref{p:method-one}(E) and (M). If $w$ is $\sigma$-finite, then the restriction of $\mu_w$ to $\cM(\mathcal E)$ is the unique measure on $\cM(\mathcal E)$ that agrees with $w$ on $\mathcal E$.
\end{lemma}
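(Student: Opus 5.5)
Existence is immediate from earlier results. By Proposition \ref{p:method-one}(M), every set in $\mathcal E$ is $\mu_w$ measurable. By Carath\'eodory's theorem (Theorem \ref{t:measure-outer-correspondence}(1)), the $\mu_w$ measurable sets form a $\sigma$-algebra, so $\cM(\mathcal E)\subset\cM_{\mu_w}$. Hence $\mu_w|_{\cM(\mathcal E)}$ is a measure, and by Proposition \ref{p:method-one}(E) it agrees with $w$ on $\mathcal E$. The content of the lemma is therefore uniqueness, which I will prove without $\pi$-$\lambda$ arguments, using maximality (Remark \ref{r:extension}) together with $\sigma$-finiteness and measurability.

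Let $\lambda$ be any measure on $\cM(\mathcal E)$ with $\lambda=w$ on $\mathcal E$, and put $\mu:=\mu_w|_{\cM(\mathcal E)}$.

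\textbf{Step 1: $\lambda\le\mu$.} Take $A\in\cM(\mathcal E)$ and any cover $A\subset\bigcup_iE_i$ with $E_i\in\mathcal E$. Countable subadditivity of the measure $\lambda$ on $\cM(\mathcal E)$ gives
\[
 \lambda(A)\le\sum_i\lambda(E_i)=\sum_iw(E_i).
\]
Taking the infimum over covers yields $\lambda(A)\le\mu_w(A)$. This is just the maximality argument of Remark \ref{r:extension}, run with $\lambda^*$ or directly.

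\textbf{Step 2: reverse inequality on a set of finite weight.} Use $\sigma$-finiteness to pick $X_1,X_2,\dots\in\mathcal E$ covering $\XX$ with $w(X_k)<\infty$. Fix $k$ and $A\in\cM(\mathcal E)$. Both $A\cap X_k$ and $X_k\setminus A$ lie in $\cM(\mathcal E)$, so Step 1 applies to each. Since both measures are additive on $\cM(\mathcal E)$,
\[
 \mu(A\cap X_k)+\mu(X_k\setminus A)=\mu(X_k)=w(X_k)=\lambda(X_k)=\lambda(A\cap X_k)+\lambda(X_k\setminus A).
\]
All terms are finite, and Step 1 bounds each $\lambda$ term by the corresponding $\mu$ term. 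Therefore equality must hold termwise, giving $\lambda(A\cap X_k)=\mu(A\cap X_k)$.

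\textbf{Step 3: globalize.} Disjointify by setting $Y_k:=X_k\setminus(X_1\cup\dots\cup X_{k-1})\in\cM(\mathcal E)$. Each $A\cap Y_k=(A\setminus(X_1\cup\dots\cup X_{k-1}))\cap X_k$ is of the form $A'\cap X_k$ with $A'\in\cM(\mathcal E)$, so Step 2 gives $\lambda(A\cap Y_k)=\mu(A\cap Y_k)$. Countable additivity of both measures over the disjoint pieces $A\cap Y_k$ then gives $\lambda(A)=\mu(A)$.

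The only delicate point is Step 2. The cancellation requires finiteness, which is exactly where $\sigma$-finiteness enters, and it requires that $X_k$ itself lies in $\mathcal E$, so that both measures are pinned to the same value $w(X_k)$. Both conditions are supplied by the definition of a $\sigma$-finite weight, so I expect no real obstacle. Measurability of $\mathcal E$ (hypothesis (M)) is used only to ensure that $\mu$ is a measure on $\cM(\mathcal E)$, which the additivity in Steps 2 and 3 requires.
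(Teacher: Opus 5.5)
Your proof is correct and follows the paper's argument essentially step for step: maximality gives $\lambda\le\mu_w$ on $\cM(\mathcal E)$, finiteness of $w$ on the covering sets $X_k\in\mathcal E$ gives equality on subsets of each $X_k$, and disjointifying the cover finishes. The only cosmetic difference is that you get equality by comparing two finite sums termwise, whereas the paper subtracts $\nu(E_i\setminus A)$ from $w(E_i)$ directly.
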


\begin{proof}
Choose $E_1,E_2,\dots\in\mathcal E$ with $\XX=\bigcup_iE_i$ and $w(E_i)<\infty$ for all $i$. By Proposition \ref{p:method-one}(E) and (M) and Theorem \ref{t:measure-outer-correspondence}(1), $\mu_w$ is a measure on $\cM(\mathcal E)$ that extends $w$. Let $\nu$ be another such measure. As in Remark \ref{r:extension}, countable subadditivity of $\nu$ gives $\nu\leq\mu_w$ on $\cM(\mathcal E)$. Fix $i$. Since $\nu(E_i)=w(E_i)=\mu_w(E_i)<\infty$,
\[
 \nu(A\cap E_i)=w(E_i)-\nu(E_i\setminus A)\geq w(E_i)-\mu_w(E_i\setminus A)=\mu_w(A\cap E_i)\quad\text{for all }A\in\cM(\mathcal E).
\]
Hence $\nu$ and $\mu_w$ agree on every set in $\cM(\mathcal E)$ contained in some $E_i$. Writing $A$ as the disjoint union of the sets $A\cap E_i\setminus(E_1\cup\dots\cup E_{i-1})$ completes the proof.
\end{proof}

\begin{lemma}[measure criterion]\label{l:measure-criterion}
Let $\cM$ be a $\sigma$-algebra on a nonempty set $\XX$, and let $\mu:\cM\to[0,\infty]$ satisfy $\mu(\emptyset)=0$. If $\mu$ is countably subadditive on $\cM$ and $\mu(A\cup B)=\mu(A)+\mu(B)$ for all disjoint $A,B\in\cM$, then $\mu$ is a measure on $\cM$.
\end{lemma}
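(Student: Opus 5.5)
The plan is to prove countable additivity directly by squeezing the measure of a countable disjoint union between two inequalities: the upper bound comes from the countable subadditivity hypothesis, and the lower bound from finite additivity together with monotonicity.

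Let $A_1,A_2,\dots\in\cM$ be pairwise disjoint and put $A:=\bigcup_{i=1}^\infty A_i\in\cM$. Countable subadditivity, applied to the cover $A\subset\bigcup_iA_i$, gives at once
\[
 \mu(A)\leq\sum_{i=1}^\infty\mu(A_i).
\]

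For the reverse inequality we first extract monotonicity from finite additivity. If $B\subset C$ with $B,C\in\cM$, then $C\setminus B\in\cM$ is disjoint from $B$, so
\[
 \mu(C)=\mu(B)+\mu(C\setminus B)\geq\mu(B).
\]
Next, an induction on $n$ extends the two-set additivity hypothesis to finitely many pairwise disjoint sets: apply it to the disjoint pair $A_1\cup\dots\cup A_{n-1}$ and $A_n$, both of which lie in $\cM$. Combining this with monotonicity gives, for every $n$,
\[
 \sum_{i=1}^n\mu(A_i)=\mu(A_1\cup\dots\cup A_n)\leq\mu(A).
\]
Letting $n\to\infty$ yields $\sum_i\mu(A_i)\leq\mu(A)$. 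Together with the upper bound, this gives countable additivity, and since $\mu(\emptyset)=0$ by hypothesis, $\mu$ is a measure on $\cM$.

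There is no real obstacle. The only point needing care is arithmetic in $[0,\infty]$: the monotonicity step uses only addition, never subtraction, so it remains valid when some of the values are infinite.
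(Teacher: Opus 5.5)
Your argument is correct, and it is exactly the alternative proof the paper records in parentheses at the end of its own proof: monotonicity and finite additivity give $\mu(\bigcup_{i}A_i)\geq\sum_{i=1}^n\mu(A_i)$ for every $n$, and countable subadditivity gives the reverse inequality. The paper's primary proof instead takes $\mathcal E=\cM$ and $w=\mu$, uses Proposition~\ref{p:method-one}(E) to get $\mu_w|_{\cM}=\mu$ and Proposition~\ref{p:method-one}(M) with $F_0=E'\cap E$, $F_1=E'\setminus E$ to make every set in $\cM$ $\mu_w$ measurable, and then reads off countable additivity from Carath\'eodory's theorem (Theorem~\ref{t:measure-outer-correspondence}(1)); your route is shorter and self-contained, while the paper's main route showcases the Method I machinery.
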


\begin{proof}
Let $\mu_w$ be the Method I outer measure with $\mathcal E=\cM$ and $w=\mu$. By Proposition \ref{p:method-one}(E), $\mu_w|_\cM=\mu$. For $E,E'\in\cM$, take $F_0=E'\cap E$ and $F_1=E'\setminus E$. Then $w(F_0)+w(F_1)=w(E')$, and every set in $\cM$ is $\mu_w$ measurable by Proposition \ref{p:method-one}(M). By Theorem \ref{t:measure-outer-correspondence}(1), $\mu=\mu_w|_\cM$ is countably additive. (Alternatively, finite additivity gives monotonicity, and for disjoint $A_1,A_2,\dots\in\cM$ gives $\mu(\bigcup_{i=1}^\infty A_i)\geq\sum_{i=1}^n\mu(A_i)$ for all $n$. The reverse inequality is countable subadditivity.)
\end{proof}

\begin{remark}[the Carath\'eodory--Hahn extension theorem]\label{r:caratheodory-hahn}
Let $\mathcal A$ be an algebra of subsets of $\XX$ (a family containing $\emptyset$ that is closed under complements and finite unions), and let $\mu_0:\mathcal A\to[0,\infty]$ be a premeasure: $\mu_0(\emptyset)=0$ and $\mu_0(\bigcup_iA_i)=\sum_i\mu_0(A_i)$ whenever $A_1,A_2,\dots\in\mathcal A$ are pairwise disjoint with $\bigcup_iA_i\in\mathcal A$. The Carath\'eodory--Hahn theorem says that $\mu_0$ extends to a measure on $\cM(\mathcal A)$, uniquely when $\XX$ is a countable union of sets in $\mathcal A$ of finite $\mu_0$ measure. Take $\mathcal E=\mathcal A$ and $w=\mu_0$. A cover $A\subset\bigcup_iA_i$ in $\mathcal A$ can be disjointified inside $\mathcal A$, which gives countable subadditivity, and the sets $F_0=E'\cap E$ and $F_1=E'\setminus E$ verify the hypothesis of Proposition \ref{p:method-one}(M). Proposition \ref{p:method-one}(E), Theorem \ref{t:measure-outer-correspondence}(1), and Lemma \ref{l:uniqueness} then give existence and uniqueness. The same argument applies to semirings, after writing the differences as finite disjoint unions.

Thus, the algebra and the countable additivity of $\mu_0$ are used only to verify the hypotheses of Proposition \ref{p:method-one}(E) and (M). The applications below verify these hypotheses directly on the family where a weight is naturally defined, usually a semiring, and the weight is never extended to the generated ring. In Theorem \ref{t:riesz}, the family is not a semiring, and measurability comes from Remark \ref{r:measurability}.
\end{remark}

\begin{example}[a weight that is not countably subadditive]\label{ex:rational-intervals}
Let $\XX=\QQ\cap(0,1]$, let $\mathcal E$ consist of the empty set and the sets $\QQ\cap(a,b]$ with $0\leq a<b\leq1$, and put $w(\QQ\cap(a,b])=b-a$. Since the rational numbers are dense, $w$ is well defined, and if $\QQ\cap(a,b]\subset\bigcup_{i=1}^k\QQ\cap(a_i,b_i]$, then density gives $(a,b]\subset\bigcup_{i=1}^k[a_i,b_i]$. If finitely many closed intervals $[a_i,b_i]$ cover $(a,b]$, then $b-a\leq\sum_{i=1}^k(b_i-a_i)$. Thus, $w$ is finitely subadditive. Enumerate $\XX=\{q_1,q_2,\dots\}$ and let $\varepsilon>0$. The sets $\QQ\cap(\max\{q_j-2^{-j}\varepsilon,0\},q_j]$ cover $\XX$ and have total weight at most $\varepsilon$. Hence $\mu_w(\XX)=0<1=w(\XX)$, and $w$ is not countably subadditive.
\end{example}

\section{Integration}\label{s:integration}

\subsection{The Lebesgue integral as a measure}\label{ss:integral}

For $A\subset\XX$, let $\chi_A$ denote the characteristic function of $A$, so $\chi_A(x)=1$ for $x\in A$ and $\chi_A(x)=0$ for $x\notin A$.

\begin{definition}[the Lebesgue integral as a set function]\label{d:integral}
Let $\mu$ be a measure on a $\sigma$-algebra $\cM$ on $\XX$. For every function $f:\XX\to[0,\infty]$, the \emph{Lebesgue integral} of $f$ with respect to $\mu$ is the set function $\mu\res f:\cM\to[0,\infty]$ defined by
\[
 \mu\res f(A)=\int_Af\,d\mu:=\sup\left\{\sum_{i=1}^nc_i\,\mu(E_i\cap A)\right\}\quad\text{for all }A\in\cM,
\]
where the supremum runs over all finite lists of sets $E_1,\dots,E_n\in\cM$ and numbers $c_1,\dots,c_n\in[0,\infty)$ such that $\sum_{i=1}^nc_i\chi_{E_i}\leq f$. Here $0\cdot\infty=0$.
\end{definition}

Note that no measurability of $f$ is assumed. It follows directly from the definition that $\int_Af\,d\mu\leq\int_Ag\,d\mu$ whenever $f\leq g$. If $A\in\cM$, then $\mu\res\chi_A(B)=\mu(A\cap B)$ for all $B\in\cM$, by refining any admissible list to the finite partition generated by $E_1,\dots,E_n$. Thus, $\mu\res\chi_A$ agrees with the restriction $\mu\res A$ on $\cM$, and the two uses of the symbol $\res$ are compatible.

\begin{theorem}[the Lebesgue integral is a measure]\label{t:integral}
Let $\mu$ be a measure on a $\sigma$-algebra $\cM$ on $\XX$, and let $f:\XX\to[0,\infty]$. Then $\mu\res f$ is a measure on $\cM$, and $\mu\res f(N)=0$ whenever $\mu(N)=0$.
\end{theorem}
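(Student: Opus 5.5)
The plan is to apply Lemma \ref{l:measure-criterion} to $\nu:=\mu\res f$ on $\cM$. Clearly $\nu(\emptyset)=0$, since every term $c_i\mu(E_i\cap\emptyset)$ vanishes. So it suffices to prove two things: finite additivity on disjoint pairs $A,B\in\cM$, and countable subadditivity on $\cM$. The null-set claim is immediate, because $\mu(N)=0$ forces every term $c_i\mu(E_i\cap N)$ to be $0$, using $0\cdot\infty=0$ only in the harmless direction.

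\emph{Finite additivity.} For disjoint $A,B\in\cM$, the inequality $\nu(A\cup B)\le\nu(A)+\nu(B)$ follows from splitting $\mu(E_i\cap(A\cup B))=\mu(E_i\cap A)+\mu(E_i\cap B)$ inside any admissible sum. For the reverse inequality I take admissible lists $(c_i,E_i)$ and $(d_j,F_j)$, nearly optimal for $A$ and $B$ respectively. Form the combined list $(c_i,E_i\cap A)$, $(d_j,F_j\cap B)$. It is admissible because on $A$ only the first family contributes and on $B$ only the second, and the sum in each is at most $f$. Its value on $A\cup B$ is $\sum c_i\mu(E_i\cap A)+\sum d_j\mu(F_j\cap B)$.

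\emph{Countable subadditivity.} This is the main step. Let $A\subset\bigcup_k A_k$ with all sets in $\cM$. By finite additivity and monotonicity in the set variable (each term increases with $A$), I may disjointify and replace $A_k$ by $A\cap A_k\setminus(A_1\cup\dots\cup A_{k-1})$. So assume $A=\bigsqcup_k A_k$. Fix an admissible list $(c_i,E_i)$. Countable additivity of $\mu$ gives
\[
\sum_{i=1}^n c_i\mu(E_i\cap A)=\sum_{i=1}^n c_i\sum_k\mu(E_i\cap A_k)=\sum_k\sum_{i=1}^n c_i\mu(E_i\cap A_k)\le\sum_k\nu(A_k).
\]
The interchange is legitimate because all terms are nonnegative and the $i$-sum is finite. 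Taking the supremum over admissible lists gives $\nu(A)\le\sum_k\nu(A_k)$.

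The key point is that the supremum is over \emph{fixed} finite simple minorants, so countable additivity of $\mu$ passes through each one directly; no monotone convergence or measurability of $f$ is needed. The only real obstacle is the reduction to disjoint covers. I will handle it as above, using monotonicity of $\nu$ in the set variable, which is clear from the definition, together with the sets $A\cap A_k\setminus(\cdots)\subset A_k$. Lemma \ref{l:measure-criterion} then shows that $\nu$ is a measure.
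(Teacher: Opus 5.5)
Your proposal is correct and follows essentially the same route as the paper: Lemma \ref{l:measure-criterion}, the trivial null-set estimate, the combined list $(c_i,E_i\cap A),(d_j,F_j\cap B)$ for superadditivity, and interchanging the finite $i$-sum with the countable sum for subadditivity. The one difference is that the paper skips your disjointification by using countable subadditivity of $\mu$ directly, $\mu(E_i\cap A)\le\sum_k\mu(E_i\cap A_k)$, so the step you call the ``only real obstacle'' is not needed (and in any case it uses only monotonicity, not finite additivity); likewise the $\le$ half of finite additivity already follows from countable subadditivity.
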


\begin{proof}
By Lemma \ref{l:measure-criterion}, it suffices to show that $\mu\res f(\emptyset)=0$ and that $\mu\res f$ is countably subadditive and finitely additive.

If $\mu(N)=0$, then $\sum_{i=1}^nc_i\,\mu(E_i\cap N)=0$ for every admissible list. Hence $\mu\res f(N)=0$. In particular, $\mu\res f(\emptyset)=0$.

Let $A,A_1,A_2,\dots\in\cM$ with $A\subset\bigcup_{j=1}^\infty A_j$. For every admissible list,
\[
 \sum_{i=1}^nc_i\,\mu(E_i\cap A)\leq\sum_{i=1}^nc_i\sum_{j=1}^\infty\mu(E_i\cap A_j)=\sum_{j=1}^\infty\sum_{i=1}^nc_i\,\mu(E_i\cap A_j)\leq\sum_{j=1}^\infty\mu\res f(A_j).
\]
Taking the supremum gives $\mu\res f(A)\leq\sum_{j=1}^\infty\mu\res f(A_j)$.

Let $A,B\in\cM$ be disjoint. It suffices to show that $\mu\res f(A)+\mu\res f(B)\leq\mu\res f(A\cup B)$ when the right side is finite. Let $\varepsilon>0$. Choose admissible lists with $\mu\res f(A)-\varepsilon\leq\sum_{i=1}^nc_i\,\mu(E_i\cap A)$ and $\mu\res f(B)-\varepsilon\leq\sum_{j=1}^md_j\,\mu(F_j\cap B)$. Replacing $E_i$ by $E_i\cap A$ and $F_j$ by $F_j\cap B$ keeps both lists admissible. Since $A$ and $B$ are disjoint, the combined list $\sum_ic_i\chi_{E_i\cap A}+\sum_jd_j\chi_{F_j\cap B}\leq f$ is admissible, and its value on $A\cup B$ is $\sum_ic_i\,\mu(E_i\cap A)+\sum_jd_j\,\mu(F_j\cap B)$. Hence $\mu\res f(A)+\mu\res f(B)-2\varepsilon\leq\mu\res f(A\cup B)$. Letting $\varepsilon\to0$ completes the proof.
\end{proof}

\begin{lemma}[simple functions]\label{l:simple-integral}
If $\varphi=\sum_{i=1}^n c_i\chi_{E_i}$ is a nonnegative measurable simple function, then
\[
 \int_A\varphi\,d\mu=\sum_{i=1}^n c_i\mu(E_i\cap A).
\]
\end{lemma}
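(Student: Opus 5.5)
The inequality $\int_A\varphi\,d\mu\geq\sum_i c_i\mu(E_i\cap A)$ is immediate: the list $E_1,\dots,E_n$, $c_1,\dots,c_n$ is itself admissible in Definition \ref{d:integral}, since $\sum_ic_i\chi_{E_i}=\varphi\leq\varphi$. So the whole content is the reverse inequality. We must show that every admissible list $F_1,\dots,F_m\in\cM$, $d_1,\dots,d_m\geq0$ with $\psi:=\sum_jd_j\chi_{F_j}\leq\varphi$ satisfies $\sum_jd_j\mu(F_j\cap A)\leq\sum_ic_i\mu(E_i\cap A)$.

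To do this I will pass to a common refinement. Let $G_1,\dots,G_N$ be the nonempty atoms of the finite partition of $\XX$ generated by $E_1,\dots,E_n,F_1,\dots,F_m$. Each atom lies in $\cM$, and each $E_i$ and $F_j$ is a disjoint union of atoms. On each atom $G_k$, the functions $\varphi$ and $\psi$ are constant, with values
\[
 a_k:=\sum_{i:G_k\subset E_i}c_i\qquad\text{and}\qquad b_k:=\sum_{j:G_k\subset F_j}d_j .
\]
Since $G_k\neq\emptyset$, the inequality $\psi\leq\varphi$ evaluated at a point of $G_k$ gives $b_k\leq a_k$.

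Finite additivity of $\mu$ on the disjoint sets $G_k\cap A$, together with exchanging finite sums, then gives
\[
 \sum_jd_j\mu(F_j\cap A)=\sum_kb_k\,\mu(G_k\cap A)\leq\sum_ka_k\,\mu(G_k\cap A)=\sum_ic_i\mu(E_i\cap A).
\]
The convention $0\cdot\infty=0$ causes no trouble here. All coefficients are finite and nonnegative, so both the rearrangements and the termwise comparison remain valid when some $\mu(G_k\cap A)=\infty$. Taking the supremum over admissible lists finishes the proof.

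The main point needing care is the bookkeeping in the refinement step. One must check that the representation $\sum_jd_j\mu(F_j\cap A)=\sum_kb_k\mu(G_k\cap A)$ holds even though the $F_j$ overlap. This follows by writing each $F_j\cap A$ as the disjoint union of the sets $G_k\cap A$ with $G_k\subset F_j$. This is the same refinement argument the paper already invokes for $\mu\res\chi_A$.
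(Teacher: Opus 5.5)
Your proof is correct and follows the paper's argument: the given representation is itself admissible, and for the reverse inequality you refine the $E_i$ together with the sets of an arbitrary admissible minorant into a finite measurable partition, compare the constant values on each atom, and conclude by finite additivity. Your extra bookkeeping for the overlapping $F_j$ and for the convention $0\cdot\infty=0$ only makes explicit what the paper's short proof leaves implicit.
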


\begin{proof}
The displayed sum is admissible. For the reverse inequality, refine the sets $E_i$ together with the sets in any admissible simple minorant into a finite measurable partition. On each atom the two simple functions are constant, and the minorant is bounded above by $\varphi$. Finite additivity of $\mu$ on the atoms gives the desired inequality.
\end{proof}

A function $f:\XX\to[0,\infty]$ is \emph{$\cM$ measurable} if $\{x\in\XX:f(x)>a\}\in\cM$ for all $a\in\RR$. In particular, $\{f\geq a\}\in\cM$ for every $a>0$.

\begin{theorem}[monotone convergence]\label{t:mct}
Let $\mu$ be a measure on a $\sigma$-algebra $\cM$ on $\XX$, and let $f_1\leq f_2\leq\cdots$ be functions from $\XX$ to $[0,\infty]$ with pointwise limit $f$. Then $\lim_{i\to\infty}\int_\XX f_i\,d\mu\leq\int_\XX f\,d\mu$. If, moreover, each $f_i$ is $\cM$ measurable, then equality holds.
\end{theorem}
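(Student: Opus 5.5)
The inequality $\lim_i\int_\XX f_i\,d\mu\leq\int_\XX f\,d\mu$ needs no measurability. Since $f_i\leq f_{i+1}\leq f$, the remark after Definition \ref{d:integral} shows that $\int_\XX f_i\,d\mu$ is nondecreasing in $i$ and bounded above by $\int_\XX f\,d\mu$. So the limit exists and satisfies the inequality. All the work is in the reverse inequality when each $f_i$ is $\cM$ measurable.

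For the reverse inequality, I fix an admissible list $\varphi=\sum_{k=1}^n c_k\chi_{E_k}\leq f$ and a number $t\in(0,1)$. The goal is to show
\[
 t\sum_{k=1}^n c_k\,\mu(E_k)\leq\lim_{i\to\infty}\int_\XX f_i\,d\mu ,
\]
and then let $t\to1$ and take the supremum over $\varphi$.

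The first step is to refine $E_1,\dots,E_n$ into a finite measurable partition. On each atom $P$ of this partition, $\varphi$ equals a constant $a_P\geq0$. By Lemma \ref{l:simple-integral}, $\sum_k c_k\mu(E_k)=\sum_P a_P\,\mu(P)$. It therefore suffices to handle a single atom $P$ with $a_P>0$.

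The second step is to set $A_i:=P\cap\{f_i\geq ta_P\}$. Each $A_i$ lies in $\cM$ by measurability of $f_i$. The sets $A_i$ increase with $i$ because $f_i$ increases. They exhaust $P$: on $P$ we have $f\geq a_P>ta_P$, and $f_i\to f$. Now $ta_P\chi_{A_i}\leq f_i$ is admissible for $f_i$, so
\[
 ta_P\,\mu(A_i)\leq\int_{P}f_i\,d\mu .
\]
Continuity from below of $\mu$, which follows from countable additivity, gives $\mu(A_i)\to\mu(P)$.

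To combine the atoms, I use that $\mu\res f_i$ is a measure by Theorem \ref{t:integral}, in particular finitely additive. Summing over the finitely many atoms gives
\[
 t\sum_P a_P\,\mu(A_i\cap P)\leq\sum_P\int_P f_i\,d\mu\leq\int_\XX f_i\,d\mu .
\]
Letting $i\to\infty$ and then $t\to1$ yields the claim. If $\mu(P)=\infty$, the same argument shows the limit is $\infty$, so there is no issue.

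The main obstacle is the exhaustion step. The factor $t<1$ is needed there: with $t=1$, the sets $\{f_i\geq a_P\}$ might never exhaust $P$, since $f_i$ could approach $a_P$ strictly from below. This step is also where measurability of the $f_i$ is genuinely used.
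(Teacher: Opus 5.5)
Your proof is correct and is essentially the paper's argument: the same factor $t<1$, exhaustion of the support of $\varphi$ by superlevel sets of the measurable $f_i$, and continuity from below. The only difference is bookkeeping. The paper uses the single set $A_i=\{f_i\geq t\varphi\}$ for all atoms at once and evaluates via Lemma \ref{l:simple-integral}, whereas you treat each atom separately and reassemble using the finite additivity of $\mu\res f_i$ from Theorem \ref{t:integral}.
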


\begin{proof}
The inequality follows from monotonicity in the integrand. Assume that each $f_i$ is $\cM$ measurable, and let $L=\lim_{i\to\infty}\int_\XX f_i\,d\mu$. Let $\varphi=\sum_{j=1}^nc_j\chi_{E_j}\leq f$ be admissible, let $0<t<1$, and put $A_i=\{x\in\XX:f_i(x)\geq t\varphi(x)\}$. Since $\varphi$ is constant on the atoms of the finite measurable partition generated by $E_1,\dots,E_n$, each $A_i$ is measurable. The sets $A_i$ increase to $\XX$: if $\varphi(x)=0$, then $x\in A_1$, and if $\varphi(x)>0$, then $t\varphi(x)<f(x)$ and $x\in A_i$ for all large $i$. Since $t\varphi\chi_{A_i}\leq f_i$, Lemma \ref{l:simple-integral} gives $t\sum_jc_j\mu(E_j\cap A_i)\leq L$ for every $i$. Continuity from below gives $t\sum_jc_j\mu(E_j)\leq L$. Taking the supremum over admissible $\varphi$ and letting $t\uparrow1$ gives $\int_\XX f\,d\mu\leq L$.
\end{proof}

\begin{lemma}[locality, null changes, and completion]\label{l:integral-locality}
Let $f,g:\XX\to[0,\infty]$ and $A\in\cM$.
\begin{enumerate}
\item If $f=g$ on $A$, then $\int_Af\,d\mu=\int_Ag\,d\mu$.
\item The same conclusion holds if $f=g$ $\mu$ almost everywhere on $A$.
\item If $(\XX,\overline{\cM},\overline\mu)$ is the completion of $(\XX,\cM,\mu)$, then $\int_A f\,d\overline\mu=\int_A f\,d\mu$.
\end{enumerate}
\end{lemma}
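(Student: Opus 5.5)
For (1), the plan is to show that the supremum defining $\int_Af\,d\mu$ depends only on the values of $f$ on $A$. Let $\sum_ic_i\chi_{E_i}\leq f$ be an admissible list. Then the list $E_i\cap A$ with the same coefficients is admissible for $g$. Indeed, $\sum_ic_i\chi_{E_i\cap A}$ vanishes off $A$ and is at most $f=g$ on $A$. Since $\mu((E_i\cap A)\cap A)=\mu(E_i\cap A)$, every value in the supremum for $f$ is also attained in the supremum for $g$. Hence $\int_Af\,d\mu\leq\int_Ag\,d\mu$, and symmetry gives equality.

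For (2), suppose $f=g$ on $A\setminus N$ for some $N\in\cM$ with $\mu(N)=0$. I will use Theorem~\ref{t:integral}, which says that $\mu\res f$ and $\mu\res g$ are measures vanishing on $\mu$ null sets. Write $A=(A\setminus N)\cup(A\cap N)$, a disjoint union of sets in $\cM$. By additivity,
\[
\int_Af\,d\mu=\int_{A\setminus N}f\,d\mu+\int_{A\cap N}f\,d\mu=\int_{A\setminus N}f\,d\mu,
\]
and similarly for $g$. Part (1), applied on $A\setminus N$, shows that the two right sides agree. The hypothesis ``almost everywhere'' should be read as agreement off a measurable null set; that is the convention I would state.

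For (3), the inequality $\int_Af\,d\mu\leq\int_Af\,d\overline\mu$ is immediate. Every $\cM$-admissible list is $\overline{\cM}$-admissible with the same values, because $\overline\mu=\mu$ on $\cM$. For the reverse inequality, take an $\overline{\cM}$-admissible list $\sum_ic_i\chi_{E_i}\leq f$ and write $E_i=B_i\cup N_i'$ with $B_i\in\cM$ and $N_i'\subset N_i\in\cM$, $\mu(N_i)=0$. Put $N=\bigcup_iN_i$ and $B_i'=B_i\setminus N\in\cM$. Then $B_i'\subset E_i$, so $\sum_ic_i\chi_{B_i'}\leq f$ is $\cM$-admissible. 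Moreover $E_i\cap A$ and $B_i'\cap A$ differ by a subset of $N$, so $\overline\mu(E_i\cap A)=\mu(B_i'\cap A)$. Thus every value in the $\overline\mu$ supremum is attained in the $\mu$ supremum.

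The main obstacle is this replacement in (3). It requires shrinking each $E_i$ to a measurable subset rather than enlarging it, so that the minorant stays below $f$, which may be nonmeasurable. Removing the common null set $N$ does exactly this.
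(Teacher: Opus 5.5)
Your proof is correct and follows the paper's argument in all three parts: intersecting admissible minorants with $A$, splitting off a measurable null set using Theorem~\ref{t:integral}, and replacing each set of $\overline{\cM}$ by a measurable subset with null difference. Removing the common null set $N$ in (3) is harmless but unnecessary, since already $B_i\subset E_i$ and $E_i\setminus B_i\subset N_i$.
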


\begin{proof}
For (1), intersect every set in an admissible simple minorant with $A$. For (2), remove a measurable null set $N$ on which equality may fail. Theorem \ref{t:integral} gives $(\mu\res f)(N)=(\mu\res g)(N)=0$, so additivity in the set variable reduces the claim to (1).

For (3), the inequality $\int_A f\,d\mu\leq\int_A f\,d\overline\mu$ is immediate. Conversely, let $\sum_{i=1}^n c_i\chi_{B_i}\leq f$ be admissible for $\overline\mu$, with $B_i\in\overline{\cM}$. For each $i$, choose $C_i\in\cM$ such that $C_i\subset B_i$ and $\overline\mu(B_i\setminus C_i)=0$. Then $\sum_i c_i\chi_{C_i}\leq f$ and $\mu(C_i\cap A)=\overline\mu(B_i\cap A)$ for every $i$. Thus, every admissible value for the completed integral is an admissible value for the original integral. Taking the supremum proves (3).
\end{proof}

\begin{remark}[completion]\label{r:completion-convention}
We use the usual convention of suppressing the bar on a completed measure in integrals. Thus, for $A\in\overline{\cM}$, we write $\int_A f\,d\mu$ for $\int_A f\,d\overline\mu$ and $\mu(A)$ for $\overline\mu(A)$. Lemma \ref{l:integral-locality}(3) shows that this convention does not change an integral already defined on $(\XX,\cM,\mu)$.
\end{remark}

\begin{example}[measurability is necessary]\label{ex:mct-necessary}
Let $\cM=\{\emptyset,\ZZ_+\}$, let $\mu(\emptyset)=0$ and $\mu(\ZZ_+)=1$, and let $g:\ZZ_+\to[0,\infty]$ vanish at some point. If $c\chi_E\leq g$ with $E\in\cM$ and $c>0$, then $E=\emptyset$. Hence $\int_{\ZZ_+}g\,d\mu=0$. In particular, the functions $f_i=\chi_{\{1,\dots,i\}}$ increase to $f=\chi_{\ZZ_+}$, while $\int_{\ZZ_+}f_i\,d\mu=0$ for all $i$ and $\int_{\ZZ_+}f\,d\mu=1$. The sets $\{1,\dots,i\}$ are not in $\cM$, and the second assertion of Theorem \ref{t:mct} does not apply. Similarly, $\chi_{\{1\}}$ and $\chi_{\ZZ_+\setminus\{1\}}$ have integral $0$, while their sum $f$ has integral $1$.
\end{example}

\begin{remark}[measurability, additivity, and convergence]\label{r:convergence}
If $f,g:\XX\to[0,\infty]$ are measurable, then $\int_\XX(f+g)\,d\mu=\int_\XX f\,d\mu+\int_\XX g\,d\mu$. This follows by approximating $f$ and $g$ from below by measurable simple functions, using Lemma \ref{l:simple-integral}, and applying monotone convergence. Iteration and monotone convergence give $\int_\XX\sum_if_i\,d\mu=\sum_i\int_\XX f_i\,d\mu$ for every sequence of nonnegative measurable functions. For arbitrary nonnegative functions, only the inequality $\int_\XX(f+g)\,d\mu\geq\int_\XX f\,d\mu+\int_\XX g\,d\mu$ holds in general, and Example \ref{ex:mct-necessary} shows that it can be strict. Thus, Theorem \ref{t:integral} gives additivity in the set variable for every $f\geq0$, while measurability restores additivity in the integrand. The signed integral, Fatou's lemma, and the dominated convergence theorem then follow by the standard arguments.
\end{remark}

\begin{remark}[lower and upper integrals]\label{r:upper-integral}
The integral in Definition \ref{d:integral} is the lower Lebesgue integral. The upper Lebesgue integral
\[
 \overline{\int_A}f\,d\mu:=\inf\left\{\int_Ag\,d\mu:g\geq f\text{ and }g\text{ is }\cM\text{ measurable}\right\}
\]
also defines a measure on $\cM$ for every $f:\XX\to[0,\infty]$, by Lemma \ref{l:measure-criterion}. Countable subadditivity follows by patching near-optimal majorants on disjoint pieces, and finite additivity follows from Theorem \ref{t:integral} applied to each majorant.
\end{remark}

\subsection{Product measures and Tonelli's theorem}\label{ss:tonelli}

Let $(X,\mathcal M,\mu)$ and $(Y,\mathcal N,\nu)$ be measure spaces. A \emph{measurable rectangle} is a set $A\times B$ with $A\in\mathcal M$ and $B\in\mathcal N$. Let $\mathcal R$ consist of the empty set and all measurable rectangles, and let the \emph{product $\sigma$-algebra} $\mathcal M\otimes\mathcal N$ be the $\sigma$-algebra on $X\times Y$ generated by $\mathcal R$. For $E\subset X\times Y$, the sections of $E$ are $E_x=\{y\in Y:(x,y)\in E\}$ and $E^y=\{x\in X:(x,y)\in E\}$. On $\mathcal R$, define $w(A\times B)=\mu(A)\nu(B)$, where $0\cdot\infty=0$, and let $\mu\times\nu$ denote the Method I outer
measure generated by $w$.

\begin{theorem}[product measure by Method I]\label{t:product-measure}
Every measurable rectangle $A\times B$ is $\mu\times\nu$ measurable and satisfies $(\mu\times\nu)(A\times B)=\mu(A)\nu(B)$. Consequently, $\mathcal M\otimes\mathcal N\subset\cM_{\mu\times\nu}$, and
the restriction of the outer measure $\mu\times\nu$ to
$\mathcal M\otimes\mathcal N$ is the largest measure on
$\mathcal M\otimes\mathcal N$ that assigns $\mu(A)\nu(B)$ to every
measurable rectangle $A\times B$.
\end{theorem}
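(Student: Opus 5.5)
We verify the hypotheses of Proposition \ref{p:method-one}(E) and (M) for the weight $w(A\times B)=\mu(A)\nu(B)$ on $\mathcal R$, and then read off the remaining claims from earlier results.

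\emph{Measurability.} This step is purely combinatorial. Given rectangles $E=A\times B$ and $E'=A'\times B'$, put
\[
 F_0=(A'\cap A)\times(B'\cap B),\qquad F_1=(A'\setminus A)\times B',\qquad F_2=(A'\cap A)\times(B'\setminus B).
\]
Then $E'\cap E=F_0$ and $E'\setminus E=F_1\cup F_2$. Additivity of $\mu$ and $\nu$ gives $\sum_j w(F_j)=w(E')$; the convention $0\cdot\infty=0$ causes no trouble, since each identity is a finite sum of products of nonnegative extended reals. Hence (M) holds, and every measurable rectangle is $\mu\times\nu$ measurable. By Theorem \ref{t:measure-outer-correspondence}(1), this gives $\mathcal M\otimes\mathcal N\subset\cM_{\mu\times\nu}$.

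\emph{Countable subadditivity.} This is the main step. Suppose $A\times B\subset\bigcup_i A_i\times B_i$. For each $x\in X$ and $y\in Y$,
\[
 \chi_A(x)\chi_B(y)\leq\sum_i\chi_{A_i}(x)\chi_{B_i}(y).
\]
Fix $x$ and integrate in $y$ against $\nu$. All functions involved are measurable, so the termwise integration of series in Remark \ref{r:convergence} gives
\[
 \chi_A(x)\,\nu(B)\leq\sum_i\chi_{A_i}(x)\,\nu(B_i).
\]
Here Lemma \ref{l:simple-integral} evaluates each term, and monotonicity in the integrand handles the inequality. The right side is a countable sum of nonnegative measurable simple functions of $x$. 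Integrating in $x$ and applying the same remark again yields
\[
 \mu(A)\nu(B)\leq\sum_i\mu(A_i)\nu(B_i).
\]
The one care point is the convention $0\cdot\infty=0$. Lemma \ref{l:simple-integral} with $c=\nu(B)$ possibly infinite needs a moment's thought. If $\nu(B)=\infty$, the inequality $c\,\chi_A\leq\sum_i\nu(B_i)\chi_{A_i}$ still gives $\int c\chi_A\,d\mu=\infty\cdot\mu(A)$ with the right convention: approximate by $n\chi_A$ and apply monotone convergence, Theorem \ref{t:mct}. The same device covers $c_i=\infty$ on the right. Proposition \ref{p:method-one}(E) then gives $(\mu\times\nu)(A\times B)=\mu(A)\nu(B)$.

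\emph{Maximality.} By Theorem \ref{t:measure-outer-correspondence}(1), the restriction of $\mu\times\nu$ to $\mathcal M\otimes\mathcal N$ is a measure, and it assigns the prescribed values to rectangles. Let $\lambda$ be any measure on $\mathcal M\otimes\mathcal N$ with $\lambda(A\times B)=\mu(A)\nu(B)$. If $S\in\mathcal M\otimes\mathcal N$ is covered by rectangles $E_i$, countable subadditivity of $\lambda$ gives
\[
 \lambda(S)\leq\sum_i\lambda(E_i)=\sum_i w(E_i).
\]
Taking the infimum over covers gives $\lambda(S)\leq(\mu\times\nu)(S)$, as in Remark \ref{r:extension}. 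This proves that the restriction is the largest such measure.
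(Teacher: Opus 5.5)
Your proposal is correct and follows the paper's proof. The same three-piece decomposition $R'\cap R$, $(A'\setminus A)\times B'$, $(A'\cap A)\times(B'\setminus B)$ verifies Proposition \ref{p:method-one}(M), countable subadditivity comes from integrating a fiberwise inequality over $X$ with Remark \ref{r:convergence}, and maximality is the argument of Remark \ref{r:extension}. The one difference is minor: the paper gets $\nu(B)\le\sum_i\chi_{A_i}(x)\nu(B_i)$ for $x\in A$ directly from countable subadditivity of $\nu$ (the $B_i$ with $x\in A_i$ cover $B$) instead of integrating the indicator inequality in $y$, and your use of monotone convergence for infinite constants makes explicit a point the paper leaves tacit.
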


\begin{proof}
Suppose that $A\times B\subset\bigcup_iA_i\times B_i$ with
$A_i\times B_i\in\mathcal R$. For every $x\in A$, the sets $B_i$ with
$x\in A_i$ cover $B$, and hence $\nu(B)\leq\sum_i\chi_{A_i}(x)\nu(B_i)$.
Integrating over $A$ and using Remark \ref{r:convergence} gives
\[
 \mu(A)\nu(B)
 \leq\sum_{i=1}^\infty\mu(A\cap A_i)\nu(B_i)
 \leq\sum_{i=1}^\infty\mu(A_i)\nu(B_i).
\]
Thus, $w$ is countably subadditive on $\mathcal R$, and Proposition
\ref{p:method-one}(E) gives the asserted values.

Let $R=A\times B$ and $R'=A'\times B'$. The sets $R'\cap R$,
$(A'\setminus A)\times B'$, and $(A'\cap A)\times(B'\setminus B)$ partition
$R'$, and their weights add to $w(R')$. Proposition \ref{p:method-one}(M)
shows that $R$ is $\mu\times\nu$ measurable, and hence
$\mathcal M\otimes\mathcal N\subset\cM_{\mu\times\nu}$. Finally, let
$\lambda$ be a measure on $\mathcal M\otimes\mathcal N$ with
$\lambda(A\times B)=\mu(A)\nu(B)$ for all measurable rectangles. As in Remark
\ref{r:extension}, countable subadditivity of $\lambda$ gives
$\lambda\leq\mu\times\nu$ on $\mathcal M\otimes\mathcal N$.
\end{proof}

We write $\mu\otimes\nu$ for the restriction of the outer measure
$\mu\times\nu$ to $\mathcal M\otimes\mathcal N$. Thus, $\mu\times\nu$ is
defined on every subset of $X\times Y$, while $\mu\otimes\nu$ is a measure on
the product $\sigma$-algebra. If $\mu$ and $\nu$ are $\sigma$-finite, then
$\mu\otimes\nu$ is the unique measure on $\mathcal M\otimes\mathcal N$ that assigns
$\mu(A)\nu(B)$ to every measurable rectangle $A\times B$. Indeed, the proof of Theorem \ref{t:product-measure} shows
that $w$ satisfies the hypotheses of Proposition \ref{p:method-one}(E) and
(M), and $w$ is $\sigma$-finite. Lemma \ref{l:uniqueness} applies.

Conversely, $\mu\times\nu$ is the Method I outer measure generated by
$\mu\otimes\nu$. Since $X\times Y\in\mathcal R$, Proposition
\ref{p:method-one}(OR) gives every set $S\subset X\times Y$ a hull
$B\in\mathcal R_{\sigma\delta}\subset\mathcal M\otimes\mathcal N$ with
$(\mu\times\nu)(B)=(\mu\times\nu)(S)$. The Method I outer measure generated by
$\mu\otimes\nu$ is therefore at most $\mu\times\nu$, and it is at least
$\mu\times\nu$ by Remark \ref{r:extension}. Under $\sigma$-finiteness,
Theorem \ref{t:measure-outer-correspondence}(5) identifies
$\cM_{\mu\times\nu}$ with the completion of $\mathcal M\otimes\mathcal N$.
The completion can be strictly larger, even when $\mu$ and $\nu$ are
complete. For example, let $\mu=\nu$ be Lebesgue measure on the Lebesgue
measurable subsets of $\RR$, and let $N\subset\RR$ be a set that is not
Lebesgue measurable. Then $\{0\}\times N$ is $\mu\times\nu$ null, but it does
not belong to $\mathcal M\otimes\mathcal N$, since every section of a set in
$\mathcal M\otimes\mathcal N$ is measurable. Theorems \ref{t:tonelli-sets}
and \ref{t:tonelli} are stated for $\cM_{\mu\times\nu}$.

\begin{theorem}[Tonelli for sets]\label{t:tonelli-sets}
Assume that $\mu$ and $\nu$ are $\sigma$-finite, and let
$E\subset X\times Y$ be $\mu\times\nu$ measurable.  Then, for $\mu$ almost
every $x$, $E_x$ belongs to the completion of $\mathcal N$, and for $\nu$
almost every $y$, $E^y$ belongs to the completion of $\mathcal M$.  The
section functions $x\mapsto\nu(E_x)$ and $y\mapsto\mu(E^y)$ are measurable
with respect to the corresponding completed $\sigma$-algebras, and
\begin{equation}\label{tonelli-sets-symmetric}
 (\mu\times\nu)(E)
 =\int_X\nu(E_x)\,d\mu(x)
 =\int_Y\mu(E^y)\,d\nu(y),
\end{equation}
where each section function is extended by $0$ on the null set where it is
undefined, and we use the convention in Remark
\ref{r:completion-convention} for completed measures.
\end{theorem}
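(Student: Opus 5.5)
The plan is to prove the statement first for sets in the class $\mathcal R_{\sigma\delta}$, pass to $\mu\times\nu$ null sets, and then obtain the general case from inner regularity. By symmetry it suffices to treat $x$-sections and the first equality in \eqref{tonelli-sets-symmetric}. Throughout, I reduce to finite measure. Choose $X_k\uparrow X$ and $Y_k\uparrow Y$ with $\mu(X_k),\nu(Y_k)<\infty$. If the statement holds for every $E\cap(X_k\times Y_k)$, then continuity from below (Theorem \ref{t:measure-outer-correspondence}(1)) and monotone convergence (Theorem \ref{t:mct}) give it for $E$.

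\emph{Step 1: $\mathcal R_\sigma$ and $\mathcal R_{\sigma\delta}$.} Let $B=\bigcup_iR_i$ with $R_i\in\mathcal R$. The family $\mathcal R$ has the semiring-type splitting used in the proof of Theorem \ref{t:product-measure}. Hence I can rewrite $B$ as a countable disjoint union of measurable rectangles $A_j\times B_j$. Then $\nu(B_x)=\sum_j\chi_{A_j}(x)\nu(B_j)$, which is $\mathcal M$ measurable. Remark \ref{r:convergence} and Theorem \ref{t:product-measure} give
\[
\int_X\nu(B_x)\,d\mu=\sum_j\mu(A_j)\nu(B_j)=(\mu\times\nu)(B),
\]
using countable additivity of $\mu\times\nu$ on its measurable sets. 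For $C=\bigcap_n B^n$ with $B^n\in\mathcal R_\sigma$, I may replace $B^n$ by $B^1\cap\dots\cap B^n$, which is still in $\mathcal R_\sigma$ because $\mathcal R$ is closed under intersection. So the $B^n$ decrease. After localizing to $X_k\times Y_k$, everything is finite. Continuity from above for $\nu$ on each section, dominated by $\nu((B^1)_x)\le\nu(Y_k)$, together with continuity from above for $\mu\times\nu$ and for the measure $A\mapsto\int\nu(\cdot)\,d\mu$ (here I use a monotone convergence argument on $\nu((B^1)_x)-\nu(B^n_x)$), gives the formula for $C$. The section function is $\mathcal M$ measurable as a pointwise limit.

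\emph{Step 2: null sets.} Let $(\mu\times\nu)(N)=0$. By Proposition \ref{p:method-one}(OR) there is $C\in\mathcal R_{\sigma\delta}$ with $N\subset C$ and $(\mu\times\nu)(C)=0$. Step 1 gives $\int\nu(C_x)\,d\mu=0$, so $\nu(C_x)=0$ for $\mu$-a.e.\ $x$. For those $x$, $N_x$ is a subset of a $\nu$ null set, hence lies in the completion of $\mathcal N$ and has measure $0$. Thus $x\mapsto\nu(N_x)$ vanishes $\mu$-a.e. It is therefore measurable for the completion of $\mathcal M$, and its integral is $0$.

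\emph{Step 3: general $E$.} After localizing, $E$ has finite measure. Proposition \ref{p:method-one}(IR) gives $C_1,C_2\in\mathcal R_{\sigma\delta}$ with $C_1\setminus C_2\subset E$, $(\mu\times\nu)(C_2)=0$, and $(\mu\times\nu)(C_1\setminus C_2)=(\mu\times\nu)(E)$. Hence $E=(C_1\setminus C_2)\cup N$ with $N$ null. Note that $C_1\setminus C_2$ differs from $C_1$ by a subset of the null set $C_2$. So for a.e.\ $x$, $E_x$ differs from $(C_1)_x$ by a $\nu$ null set, by Step 2 applied to $C_2$ and $N$. Therefore $\nu(E_x)=\nu((C_1)_x)$ a.e. Lemma \ref{l:integral-locality}(2),(3) and Step 1 then give the formula.

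\emph{Main obstacle.} The delicate point is the decreasing limit in Step 1. It needs finiteness, which is why the localization to $X_k\times Y_k$ must be done before intersecting. It also needs care to justify continuity from above for the integral, using only the tools developed so far: Theorem \ref{t:integral}, Theorem \ref{t:mct}, and Remark \ref{r:convergence} applied to the differences.
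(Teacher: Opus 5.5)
Your proposal is correct and follows the paper's proof essentially step for step. It uses the same hierarchy $\mathcal R\to\mathcal R_\sigma\to\mathcal R_{\sigma\delta}\to\cM_{\mu\times\nu}$, the same localization to $X_k\times Y_k$ with monotone convergence on differences of the decreasing section functions, and the same appeal to Proposition \ref{p:method-one}(OR) and (IR) followed by continuity from below. The differences are cosmetic: you handle $\mathcal R_\sigma$ by disjointifying into countably many rectangles and using countable additivity of $\mu\times\nu$, where the paper uses increasing finite unions. You also treat null sets in a separate step rather than taking the null $\mathcal R_{\sigma\delta}$ hull inside the (IR) step.
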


\begin{proof}
We prove the first equality. The second follows by interchanging the factors.
Every finite union $F$ of measurable rectangles is a finite disjoint union of
measurable rectangles. Hence $F_x\in\mathcal N$, the function
$x\mapsto\nu(F_x)$ is $\mathcal M$ measurable, and finite additivity gives
$(\mu\times\nu)(F)=\int_X\nu(F_x)\,d\mu(x)$. Every $F\in\mathcal R_\sigma$ is
an increasing union of such finite unions. Continuity from below for
$\mu\times\nu$ and $\nu$, followed by monotone convergence, extends the
identity to $\mathcal R_\sigma$.

Choose increasing sets $A_k\in\mathcal M$ and $B_k\in\mathcal N$ of finite
measure with $\bigcup_kA_k=X$ and $\bigcup_kB_k=Y$, and put
$R_k=A_k\times B_k$. Let $F\in\mathcal R_{\sigma\delta}$, and write
$F=\bigcap_jU_j$ with $U_j\in\mathcal R_\sigma$ decreasing, which is possible
because finite intersections preserve $\mathcal R_\sigma$. Fix $k$. The
functions $g_j(x)=\nu((U_j\cap R_k)_x)$ are measurable and decrease to
$g(x)=\nu((F\cap R_k)_x)$, and $g_1\leq\nu(B_k)\chi_{A_k}$ is integrable.
Since $g_1-g_j\uparrow g_1-g$, monotone convergence and additivity in the
integrand give $\int_Xg_j\,d\mu\downarrow\int_Xg\,d\mu$. The sets
$U_j\cap R_k$ decrease to $F\cap R_k$ inside a set of finite $\mu\times\nu$
measure, and continuity from above gives
$(\mu\times\nu)(F\cap R_k)=\int_Xg\,d\mu$. Letting $k\to\infty$, continuity
from below and monotone convergence give
\begin{equation}\label{tonelli-sigmadelta}
 (\mu\times\nu)(F)=\int_X\nu(F_x)\,d\mu(x)
 \quad\text{for all }F\in\mathcal R_{\sigma\delta}.
\end{equation}
In particular, every section $F_x$ is $\mathcal N$ measurable and the section
function is $\mathcal M$ measurable.

Suppose next that $E$ is $\mu\times\nu$ measurable and
$(\mu\times\nu)(E)<\infty$. By Proposition \ref{p:method-one}(IR) and (OR),
there are $C_1,C_2,N\in\mathcal R_{\sigma\delta}$ with
$C_1\setminus C_2\subset E\subset(C_1\setminus C_2)\cup N$,
$(\mu\times\nu)(C_1)=(\mu\times\nu)(E)$, and
$(\mu\times\nu)(C_2)=(\mu\times\nu)(N)=0$. By \eqref{tonelli-sigmadelta},
$\nu((C_2)_x)=\nu(N_x)=0$ for $\mu$ almost every $x$. For such $x$, the
section $E_x$ lies between $(C_1)_x\setminus(C_2)_x$ and
$((C_1)_x\setminus(C_2)_x)\cup N_x$. Hence $E_x$ belongs to the completion
of $\mathcal N$ and $\nu(E_x)=\nu((C_1)_x)$. Thus, the section function agrees
almost everywhere with the $\mathcal M$ measurable function
$x\mapsto\nu((C_1)_x)$, and it is measurable for the completion of
$\mathcal M$. Lemma \ref{l:integral-locality} and
\eqref{tonelli-sigmadelta} give
$\int_X\nu(E_x)\,d\mu(x)=(\mu\times\nu)(C_1)=(\mu\times\nu)(E)$.

For arbitrary $E$, apply the preceding case to $E\cap R_k$. Outside one $\mu$
null set, the sections $(E\cap R_k)_x$ belong to the completion of
$\mathcal N$ for every $k$ and increase to $E_x$. Continuity from below for
$\mu\times\nu$ and $\nu$, followed by monotone convergence, gives the first
equality in general. Interchanging the factors gives the remaining
assertions.
\end{proof}

\begin{theorem}[Tonelli]\label{t:tonelli}
Assume that $\mu$ and $\nu$ are $\sigma$-finite, and let
$f:X\times Y\to[0,\infty]$ be $\cM_{\mu\times\nu}$ measurable.  Then, for
$\mu$ almost every $x$, the function $y\mapsto f(x,y)$ is measurable with
respect to the completion of $\mathcal N$, and for $\nu$ almost every $y$,
the function $x\mapsto f(x,y)$ is measurable with respect to the completion
of $\mathcal M$.  The two iterated-integral functions are measurable with
respect to the corresponding completed $\sigma$-algebras, and
\begin{equation}\label{tonelli-functions}
 \int_{X\times Y}f\,d(\mu\times\nu)
 =\int_X\int_Yf(x,y)\,d\nu(y)\,d\mu(x)
 =\int_Y\int_Xf(x,y)\,d\mu(x)\,d\nu(y).
\end{equation}
\end{theorem}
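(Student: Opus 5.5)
Theorem \ref{t:tonelli} follows from Theorem \ref{t:tonelli-sets} by the usual route: simple functions, then monotone limits. Here $\int_{X\times Y}f\,d(\mu\times\nu)$ means the integral of Definition \ref{d:integral} with respect to the measure $(\mu\times\nu)^-$ on $\cM_{\mu\times\nu}$. By symmetry it suffices to prove the first equality in \eqref{tonelli-functions} together with the accompanying measurability claims; the second follows by interchanging the factors.

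\emph{Step 1 (simple functions).} Let $\varphi=\sum_{i=1}^n c_i\chi_{E_i}$ with $E_i\in\cM_{\mu\times\nu}$ and $c_i\in[0,\infty)$. By Theorem \ref{t:tonelli-sets}, there is a single $\mu$ null set $N$ (a finite union) outside of which every section $(E_i)_x$ lies in the completion of $\mathcal N$. For such $x$, the function $y\mapsto\varphi(x,y)=\sum_ic_i\chi_{(E_i)_x}(y)$ is measurable for the completion of $\mathcal N$. Lemma \ref{l:simple-integral}, applied on the completed space in line with Remark \ref{r:completion-convention}, gives $\int_Y\varphi(x,y)\,d\nu(y)=\sum_ic_i\nu((E_i)_x)$. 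This is a finite combination of functions measurable for the completion of $\mathcal M$, and hence is itself measurable. Additivity in the integrand for measurable functions (Remark \ref{r:convergence}), Lemma \ref{l:integral-locality}(2) to discard $N$, and \eqref{tonelli-sets-symmetric} then give
\[
 \int_X\int_Y\varphi\,d\nu\,d\mu=\sum_ic_i\,(\mu\times\nu)(E_i)=\int_{X\times Y}\varphi\,d(\mu\times\nu).
\]
The last equality is Lemma \ref{l:simple-integral} again.

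\emph{Step 2 (monotone limits).} For $\cM_{\mu\times\nu}$ measurable $f\geq0$, take the standard measurable simple functions $\varphi_k\uparrow f$, for instance $\varphi_k=\sum_{j=1}^{k2^k}2^{-k}\chi_{\{f\geq j2^{-k}\}}$. Take the countable union of the exceptional null sets from Step 1 over all $k$. Outside this $\mu$ null set, each $y\mapsto\varphi_k(x,y)$ is measurable for the completed $\mathcal N$. These functions increase to $y\mapsto f(x,y)$, so the limit is measurable too, and monotone convergence (Theorem \ref{t:mct}, on the completed space) gives $\int_Y\varphi_k(x,\cdot)\,d\nu\uparrow\int_Yf(x,\cdot)\,d\nu$. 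The functions $x\mapsto\int_Y\varphi_k(x,\cdot)\,d\nu$ are measurable for the completion of $\mathcal M$ and increase, so their limit, the iterated-integral function, is measurable, after extending by $0$ on the null set. A second application of monotone convergence on $X$, together with monotone convergence on $X\times Y$ for the measure $(\mu\times\nu)^-$, passes the equality from Step 1 to the limit.

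\emph{Main obstacle.} I expect the delicate part to be the bookkeeping with completions and null sets, not any new estimate. Monotone convergence in Theorem \ref{t:mct} requires measurability of the approximants. The inner integrals are therefore taken with respect to the completed measure $\overline\nu$, and Lemma \ref{l:integral-locality}(3) guarantees this is the same integral as with respect to $\nu$. One must also check that the countably many exceptional sets combine into one $\mu$ null set, and that redefining the iterated integral on that null set changes nothing (Lemma \ref{l:integral-locality}(2)).
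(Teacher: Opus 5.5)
Your proposal is correct and follows essentially the same route as the paper. The paper also obtains \eqref{tonelli-functions} for measurable simple functions from Theorem \ref{t:tonelli-sets} and Lemma \ref{l:simple-integral}, discards one $\mu$ null set outside which every approximant section is measurable for the completion of $\mathcal N$, applies monotone convergence on $Y$ and then on $X$, and uses Lemma \ref{l:integral-locality} to handle completions and null sets; your version only spells out bookkeeping the paper leaves implicit, such as additivity in the integrand from Remark \ref{r:convergence} and the explicit approximants $\varphi_k$.
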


\begin{proof}
Choose $\cM_{\mu\times\nu}$ measurable simple functions $0\leq f_i\uparrow f$.
Theorem \ref{t:tonelli-sets} and Lemma \ref{l:simple-integral} give
\eqref{tonelli-functions} for each $f_i$.  After discarding one null set,
$f_i(x,\cdot)$ is measurable with respect to the completion of $\mathcal N$
for every $i$ and increases to $f(x,\cdot)$ for almost every $x$.  Monotone
convergence on $Y$, followed by monotone convergence on $X$ and Lemma
\ref{l:integral-locality}, gives the first equality in
\eqref{tonelli-functions}.  Interchanging the factors gives the second.
\end{proof}

\begin{remark}[comparison with other proofs]\label{r:tonelli}
The proof of Theorem \ref{t:tonelli-sets} follows the hierarchy
$\mathcal R\to\mathcal R_\sigma\to\mathcal R_{\sigma\delta}\to\cM_{\mu\times\nu}$
already present in Method I, and the last step is an instance of (OR) and
(IR) in Proposition \ref{p:method-one}. The proof uses neither Dynkin's $\pi$-$\lambda$ theorem nor a monotone class lemma. Evans and Gariepy \cite[\S1.4]{EvansGariepy} also build the product outer measure from rectangle covers and pass through countable unions and countable intersections of rectangles before treating null and measurable sets. Fubini's theorem for integrable
functions follows from Theorem \ref{t:tonelli} by splitting into positive and
negative parts.
\end{remark}

\section{The continuity principle}\label{s:stieltjes}

For an outer measure $\mu$ on a metric space $\XX$, we say that $\mu$ is \emph{Borel} if every Borel set is $\mu$ measurable, and \emph{Borel regular} if $\mu$ is Borel and every set $S\subset\XX$ is contained in a Borel set $B$ with $\mu(B)=\mu(S)$. A metric space is \emph{proper} if every closed bounded set is compact. On a proper metric space, a \emph{Radon measure} is a Borel regular outer measure that is finite on bounded sets.

Proposition \ref{p:method-one}(E) reduces the extension problem to countable subadditivity on the original covering family. In topological spaces, the next lemma reduces that criterion one step further, to finite geometric information. Compactness reduces a countable open cover to finitely many members, while finite open and compact approximations control the error. This is the continuity principle that drives the constructions below.

\begin{lemma}[continuity principle]\label{l:continuity}
Let $\XX$ be a topological space and let $\mathcal E$, $w$, and $\mu_w$ be as in Lemma \ref{l:method-one}. Assume that $w$ satisfies the following three properties.
\begin{enumerate}
\item (finitely subadditive) If $E,E_1,\dots,E_k\in\mathcal E$ and $E\subset E_1\cup\dots\cup E_k$, then $w(E)\leq\sum_{i=1}^kw(E_i)$.
\item (finitely outer approximable by open sets) For every $E\in\mathcal E$ and $\varepsilon>0$, there is an open set $U\supset E$ such that $U\setminus E$ is covered by finitely many sets $E_1',\dots,E_l'\in\mathcal E$ with $\sum_{i=1}^lw(E_i')\leq\varepsilon$.
\item (finitely inner approximable by compact sets) For every $E\in\mathcal E$ and $\varepsilon>0$, there is a compact set $K\subset E$ such that $E\setminus K$ is covered by finitely many sets $E_1'',\dots,E_m''\in\mathcal E$ with $\sum_{i=1}^mw(E_i'')\leq\varepsilon$.
\end{enumerate}
Then $w$ is countably subadditive on $\mathcal E$, and hence $\mu_w(E)=w(E)$ for all $E\in\mathcal E$.
\end{lemma}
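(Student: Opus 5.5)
Let $E,E_1,E_2,\dots\in\mathcal E$ with $E\subset\bigcup_{i=1}^\infty E_i$. The goal is to show $w(E)\leq\sum_iw(E_i)$. We may assume the right side is finite, since otherwise there is nothing to prove. Once countable subadditivity is established, the final assertion $\mu_w(E)=w(E)$ is exactly Proposition \ref{p:method-one}(E). The plan is the classical compactness argument for Lebesgue measure. Shrink $E$ to a compact set and enlarge each $E_i$ to an open set, each at small cost. Then pass to a finite subcover and apply finite subadditivity (1) to a finite family of sets in $\mathcal E$ that covers $E$.

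Fix $\varepsilon>0$. By (3), choose a compact $K\subset E$ and sets $E_1'',\dots,E_m''\in\mathcal E$ covering $E\setminus K$ with $\sum_jw(E_j'')\leq\varepsilon$. For each $i$, apply (2) with tolerance $2^{-i}\varepsilon$. This gives an open $U_i\supset E_i$ and sets $E'_{i,1},\dots,E'_{i,l_i}\in\mathcal E$ covering $U_i\setminus E_i$ with $\sum_kw(E'_{i,k})\leq2^{-i}\varepsilon$.

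The open sets $U_i$ cover $K$, so by compactness $K\subset U_1\cup\dots\cup U_n$ for some $n$. Then
\[
 E\subset (E\setminus K)\cup\bigcup_{i=1}^n\bigl(E_i\cup(U_i\setminus E_i)\bigr)
 \subset\bigcup_{j=1}^mE_j''\cup\bigcup_{i=1}^nE_i\cup\bigcup_{i=1}^n\bigcup_{k=1}^{l_i}E'_{i,k}.
\]
This is a finite cover of $E$ by members of $\mathcal E$. Finite subadditivity (1) therefore gives
\[
 w(E)\leq\varepsilon+\sum_{i=1}^nw(E_i)+\sum_{i=1}^n2^{-i}\varepsilon\leq\sum_{i=1}^\infty w(E_i)+2\varepsilon .
\]
Letting $\varepsilon\to0$ proves countable subadditivity.

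The only delicate point is the step that turns the open enlargements back into sets of $\mathcal E$. We cannot assign a weight to $U_i$ itself. Instead we use $U_i\subset E_i\cup E'_{i,1}\cup\dots\cup E'_{i,l_i}$, which is why hypotheses (2) and (3) are phrased as finite covers by sets in $\mathcal E$ rather than in terms of $\mu_w$. With that observation, everything reduces to the single application of (1) above. No topological assumptions such as Hausdorff are needed, since we only use that open covers of a compact set have finite subcovers.
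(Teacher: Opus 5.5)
Your proposal is correct and follows the paper's proof essentially step for step. Both arguments use the compact inner approximation from (3), the open enlargements $U_i$ with tolerance $2^{-i}\varepsilon$ from (2), a finite subcover $U_1,\dots,U_n$ of $K$, and one application of (1) to the resulting finite cover by sets in $\mathcal E$, giving $w(E)\leq\sum_i w(E_i)+2\varepsilon$.
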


\begin{proof}
Let $E,E_1,E_2,\dots\in\mathcal E$ with $E\subset\bigcup_{i=1}^\infty E_i$, and let $\varepsilon>0$. By (3), choose a compact set $K\subset E$ and a finite family $\mathcal A\subset\mathcal E$ that covers $E\setminus K$ and has total weight at most $\varepsilon$. By (2), for every $i\geq1$, choose an open set $U_i\supset E_i$ and a finite family $\mathcal A_i\subset\mathcal E$ that covers $U_i\setminus E_i$ and has total weight at most $2^{-i}\varepsilon$. Since $K\subset\bigcup_{i=1}^\infty U_i$ and $K$ is compact, there is $n$ with $K\subset U_1\cup\dots\cup U_n$. Hence $E$ is covered by $E_1,\dots,E_n$ and the finitely many sets in $\mathcal A\cup\mathcal A_1\cup\dots\cup\mathcal A_n$. By (1),
\[
 w(E)\leq\sum_{i=1}^nw(E_i)+\varepsilon+\sum_{i=1}^n2^{-i}\varepsilon\leq 2\varepsilon+\sum_{i=1}^\infty w(E_i).
\]
Letting $\varepsilon\to0$ shows that $w$ is countably subadditive. Proposition \ref{p:method-one}(E) completes the proof.
\end{proof}

\begin{remark}\label{r:continuity-trivial}
If every set in $\mathcal E$ is open, then condition (2) in Lemma \ref{l:continuity} holds trivially. If every set in $\mathcal E$ is compact, then condition (3) holds trivially.
\end{remark}

\begin{example}[failure of compact inner approximation]\label{ex:continuity-failure}
In Example \ref{ex:rational-intervals}, conditions (1) and (2) of Lemma \ref{l:continuity} hold, but condition (3) fails for every $\varepsilon<b-a$. Indeed, suppose that $K\subset\QQ\cap(a,b]$ is compact and that $\QQ\cap(a,b]\setminus K$ is covered by sets $\QQ\cap(a_i,b_i]$, $1\leq i\leq m$, with $\sum_{i=1}^m(b_i-a_i)<b-a$. The open set $(a,b)\setminus\bigcup_{i=1}^m[a_i,b_i]$ is nonempty and contains an interval $(c,d)$. Then $\QQ\cap(c,d)\subset K$, and the closed subset $\QQ\cap[c',d']$ of $K$ is compact for all $c<c'<d'<d$. This is impossible, since $\QQ\cap[c',d']$ is not closed in $\RR$.
\end{example}

\begin{remark}[compact classes]\label{r:marczewski}
A family $\mathcal K$ of sets is a \emph{compact class} if every countable subfamily with empty intersection has a finite subfamily with empty intersection. The classical compact-class theorem says that a finitely additive set function on an algebra that is inner regular with respect to a compact class is countably additive; see Marczewski \cite{Marczewski} and Bogachev \cite[\S1.4]{Bogachev}. Lemma \ref{l:continuity} uses compactness in a similar way, but it needs no algebra and no finite additivity. In its proof, the compact set from condition (3) is covered by the open sets from condition (2), and compactness reduces this cover to a finite one. Finite subadditivity in condition (1) then closes the argument. The conclusion is countable subadditivity, which is the hypothesis of Proposition \ref{p:method-one}(E), rather than countable additivity.
\end{remark}

\subsection{Lebesgue and Lebesgue--Stieltjes measures on \texorpdfstring{$\RR^n$}{Rn}}\label{ss:stieltjes}

An \emph{$h$-rectangle} in $\RR^n$ is a set
$R=(a_1,b_1]\times\dots\times(a_n,b_n]$ with $a_j<b_j$ for all $j$, and
$\vol R=\prod_{j=1}^n(b_j-a_j)$.

Lebesgue measure assigns to an $h$-rectangle the product $\vol R$ of its coordinate increments. Lebesgue--Stieltjes measures replace this product formula by the full mixed difference of a function at the $2^n$ vertices. The idea of generating a measure on $\RR^n$ from the mixed differences of a single function goes back to Lebesgue \cite{Lebesgue1910}, and it appears in some books, for example Doob \cite{Doob1994} and Durrett \cite{Durrett}. Most modern treatments of measure theory construct Lebesgue--Stieltjes measures only on $\RR$ and obtain measures on $\RR^n$ as products. The Method I formulation below treats all dimensions at once, and by Theorem \ref{t:stieltjes-converse} it produces every Radon measure on $\RR^n$, including measures that are not products (Example \ref{ex:diagonal}). Lebesgue measure is the case $f(x)=x_1\cdots x_n$ (Example \ref{ex:stieltjes}). The coordinate-difference notation below records the mixed difference.

\begin{definition}[signed variation]\label{d:signed-variation}
For an $h$-rectangle $R=(a_1,b_1]\times\dots\times(a_n,b_n]$, let $V_R:=\{a_1,b_1\}\times\dots\times\{a_n,b_n\}$ be its set of vertices, and for $v=(v_1,\dots,v_n)\in V_R$ put $\operatorname{sign}(v)=+1$ if $\#\{j:v_j=a_j\}$ is even and $\operatorname{sign}(v)=-1$ otherwise. For every function $f:\RR^n\to\RR$, the \emph{signed variation} of $f$ over $R$ is
\[
 \var^\pm(f,R):=\sum_{v\in V_R}\operatorname{sign}(v)f(v).
\]
\end{definition}

When $n=1$, $\var^\pm(f,(a,b])=f(b)-f(a)$. When $n=2$ and $R=(a_1,b_1]\times(a_2,b_2]$, $\var^\pm(f,R)=f(b_1,b_2)-f(a_1,b_2)-f(b_1,a_2)+f(a_1,a_2)$. With $(a_1,\dots,a_n)$ at the lower left (front) corner, the signs at the vertices for $n=2$ and $n=3$ are
\[
\begin{tikzpicture}[baseline=(current bounding box.center),scale=0.8,
 vtx/.style={circle,draw,fill=white,inner sep=0pt,minimum size=9pt,font=\scriptsize}]
 \draw (0,0) rectangle (2.4,1.6);
 \node[vtx] at (0,0) {$+$}; \node[vtx] at (2.4,0) {$-$};
 \node[vtx] at (0,1.6) {$-$}; \node[vtx] at (2.4,1.6) {$+$};
\end{tikzpicture}
\qquad\qquad
\begin{tikzpicture}[baseline=(current bounding box.center),scale=0.7,
 x={(1cm,0cm)},y={(0.55cm,0.4cm)},z={(0cm,1cm)},
 vtx/.style={circle,draw,fill=white,inner sep=0pt,minimum size=9pt,font=\scriptsize}]
 \def\L{2.4}\def\D{1.8}\def\H{1.6}
 \draw[dashed] (0,\D,0) -- (\L,\D,0); \draw[dashed] (0,\D,0) -- (0,0,0); \draw[dashed] (0,\D,0) -- (0,\D,\H);
 \draw (0,0,0) -- (\L,0,0) -- (\L,\D,0) -- (\L,\D,\H) -- (0,\D,\H) -- (0,0,\H) -- cycle;
 \draw (0,0,\H) -- (\L,0,\H) -- (\L,\D,\H); \draw (\L,0,0) -- (\L,0,\H);
 \node[vtx] at (0,0,0) {$-$}; \node[vtx] at (\L,0,0) {$+$};
 \node[vtx] at (0,\D,0) {$+$}; \node[vtx] at (0,0,\H) {$+$};
 \node[vtx] at (\L,\D,0) {$-$}; \node[vtx] at (\L,0,\H) {$-$};
 \node[vtx] at (0,\D,\H) {$-$}; \node[vtx] at (\L,\D,\H) {$+$};
\end{tikzpicture}
\]
Crossing an edge swaps one coordinate between $a_j$ and $b_j$ and changes the sign.
Let $\Delta^j_{a_j,b_j}$ denote the difference operator in
the $j$th coordinate.  The operators commute, and
\begin{equation}\label{mixed-difference}
 \var^\pm(f,R)=
 \Delta^1_{a_1,b_1}\cdots\Delta^n_{a_n,b_n}f.
\end{equation}

\begin{lemma}[additivity]\label{l:var-additive}
If the $h$-rectangles $R_1,\dots,R_k$ partition the $h$-rectangle $R$, then
\[
 \var^\pm(f,R)=\sum_{i=1}^k\var^\pm(f,R_i).
\]
\end{lemma}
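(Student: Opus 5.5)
The plan is to reduce an arbitrary partition to a grid partition and then use the one-dimensional telescoping identity through the mixed-difference formula \eqref{mixed-difference}.

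\emph{Step 1: grid additivity.} For each coordinate $j$, collect all endpoints of the intervals $R_1,\dots,R_k$ in the $j$th coordinate; together with $a_j,b_j$ they give a partition $a_j=t^j_0<t^j_1<\dots<t^j_{m_j}=b_j$. The grid rectangles $Q_\alpha=\prod_j(t^j_{\alpha_j-1},t^j_{\alpha_j}]$ partition $R$. In one variable, $\Delta^j_{a_j,b_j}g=\sum_{l}\Delta^j_{t^j_{l-1},t^j_l}g$ by telescoping, for any function $g$. Each difference operator is linear, and the operators commute. Expanding $\Delta^1_{a_1,b_1}\cdots\Delta^n_{a_n,b_n}f$ one coordinate at a time therefore gives
\[
 \var^\pm(f,R)=\sum_\alpha\var^\pm(f,Q_\alpha).
\]
The same identity applies to each $R_i$, using the grid cells contained in $R_i$.

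\emph{Step 2: regrouping.} Each $R_i$ is a product of half-open intervals whose endpoints lie among the $t^j_l$. Hence $R_i$ is exactly the union of the grid cells $Q_\alpha$ it contains, and Step 1 applied to $R_i$ gives $\var^\pm(f,R_i)=\sum_{Q_\alpha\subset R_i}\var^\pm(f,Q_\alpha)$. Since the $R_i$ are disjoint with union $R$, and each grid cell is nonempty and meets exactly one $R_i$, each $Q_\alpha$ lies in exactly one $R_i$. Summing over $i$ and comparing with Step 1 for $R$ gives the lemma.

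\emph{Main obstacle.} The only point needing care is justifying that a grid cell meeting $R_i$ is contained in $R_i$. In each coordinate, the interval $(t^j_{l-1},t^j_l]$ contains no grid point in its interior. It therefore cannot straddle an endpoint of the $j$th side of $R_i$, so it is either contained in that side or disjoint from it. Taking the product over coordinates gives the containment. The telescoping expansion itself is routine by induction on the number of coordinates, using linearity and commutativity of the $\Delta^j$.
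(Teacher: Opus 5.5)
Your proof is correct and follows essentially the same route as the paper: telescoping in each coordinate through the mixed-difference formula, refining to the common grid built from all endpoints, and regrouping the cells according to the unique $R_i$ that contains each one. The only cosmetic difference is that you telescope over all grid points of a coordinate at once, whereas the paper proves the two-piece split along a single hyperplane $x_j=c$ and applies it repeatedly.
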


\begin{proof}
\emph{Special case.} Suppose that $k=2$ and that $R$ is split by the hyperplane $x_j=c$, where $a_j<c<b_j$. That is, $R_1$ and $R_2$ agree with $R$ except that their $j$th sides are $(a_j,c]$ and $(c,b_j]$. In the $j$th coordinate, the telescoping identity $g(b_j)-g(a_j)=\bigl(g(b_j)-g(c)\bigr)+\bigl(g(c)-g(a_j)\bigr)$ reads $\Delta^j_{a_j,b_j}=\Delta^j_{a_j,c}+\Delta^j_{c,b_j}$. The difference operators are linear, so by \eqref{mixed-difference},
\begin{equation*}
\begin{split}
 \var^\pm(f,R)
 &=\Delta^1_{a_1,b_1}\cdots\bigl(\Delta^j_{a_j,c}+\Delta^j_{c,b_j}\bigr)\cdots\Delta^n_{a_n,b_n}f\\
 &=\var^\pm(f,R_1)+\var^\pm(f,R_2).
\end{split}
\end{equation*}

\emph{General case.} Simply apply the special case a finite number of times. For each $j$, list the $j$th endpoints of $R_1,\dots,R_k$ in increasing order. These points include $a_j$ and $b_j$, and they cut $R$ into an $m_1\times\cdots\times m_n$ grid of cells. Every cell lies in exactly one $R_i$, and each $R_i$ is the union of the cells it contains. Cutting a rectangle along one hyperplane $x_j=c$ at a time, the special case shows that $\var^\pm(f,R)$ is the sum of $\var^\pm(f,Q)$ over all cells $Q$, and that $\var^\pm(f,R_i)$ is the sum of $\var^\pm(f,Q)$ over the cells $Q\subset R_i$. Summing the latter identity over $i$ gives the former. It is helpful to draw a picture.
\end{proof}

\begin{definition}[Lebesgue--Stieltjes measures]\label{d:stieltjes}
Let $f:\RR^n\to\RR$ satisfy
\begin{enumerate}
\item (nonnegative signed variation) $\var^\pm(f,R)\geq0$ for every $h$-rectangle $R$, and
\item (right continuity) $f(x+h)\to f(x)$ as $h\to0$ with
$h\in[0,\infty)^n$, for every $x\in\RR^n$.
\end{enumerate}
The \emph{Lebesgue--Stieltjes measure} $\mu_f$ is the Method I outer measure on $\RR^n$ with $\mathcal E$ the family of $h$-rectangles and the empty set and with weight $w(R)=\var^\pm(f,R)$.
\end{definition}

\begin{theorem}[Lebesgue--Stieltjes construction]\label{t:stieltjes}
Let $f$ be as in Definition \ref{d:stieltjes}. Then $\mu_f$ is a Radon measure on $\RR^n$ and $\mu_f(R)=\var^\pm(f,R)$ for every $h$-rectangle $R$.
\end{theorem}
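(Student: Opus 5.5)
The plan is to verify the three hypotheses of the continuity principle, Lemma \ref{l:continuity}, for the weight $w(R)=\var^\pm(f,R)$ on $h$-rectangles. This gives $\mu_f(R)=\var^\pm(f,R)$. Measurability then comes from Proposition \ref{p:method-one}(M), and Borel regularity from (OR).

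\emph{Finite subadditivity.} Suppose $R\subset R_1\cup\dots\cup R_k$. Form the common grid generated by all endpoints of $R,R_1,\dots,R_k$ in each coordinate. Each $R$ and $R_i$ is a union of grid cells, so Lemma \ref{l:var-additive} writes each weight as the sum of the weights of its cells. Every cell of $R$ lies in some $R_i$, and all cell weights are nonnegative by hypothesis (1). Hence $w(R)\le\sum_iw(R_i)$. The same grid argument shows that $w$ is monotone and finitely additive on disjoint unions. This is needed for the (M) hypothesis: with $R=(a,b]$ and $R'$, the set $R'\cap R$ is an $h$-rectangle or empty, and $R'\setminus R$ is a finite disjoint union of $h$-rectangles cut from the grid. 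Their weights add to $w(R')$ by Lemma \ref{l:var-additive}. Thus every $h$-rectangle is $\mu_f$ measurable. Since open sets in $\RR^n$ are countable unions of $h$-rectangles (dyadic cubes), every Borel set is $\mu_f$ measurable.

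\emph{Approximations.} For (2), given $R=\prod(a_j,b_j]$, I take $U=\prod(a_j,b_j+\delta)$. Then $U\setminus R\subset R_\delta\setminus R$ with $R_\delta=\prod(a_j,b_j+\delta]$, and $R_\delta\setminus R$ is a finite disjoint union of $h$-rectangles. By the grid additivity, their total weight is $\var^\pm(f,R_\delta)-\var^\pm(f,R)$. For (3), I take $K=\prod[a_j+\delta,b_j]\subset R$. Then $R\setminus K\subset R\setminus R^\delta$ with $R^\delta=\prod(a_j+\delta,b_j]$, and the error is $\var^\pm(f,R)-\var^\pm(f,R^\delta)$. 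Both errors tend to $0$ as $\delta\downarrow0$ because $\var^\pm(f,\cdot)$ is a finite signed sum of values of $f$ at vertices. In $R_\delta$ each vertex moves by a nonnegative vector $h$, and in $R^\delta$ the vertex coordinates $a_j+\delta$ approach $a_j$ from the right. Right continuity (hypothesis (2)) therefore gives convergence at every vertex. The main point needing care is this step: right continuity must be applied at the moved vertices in the correct direction. For (2) the perturbation $h$ is nonnegative, so this is direct. For (3) we need $f(a+h)\to f(a)$ for $h\ge0$, which holds since the vertex $(a_j+\delta)$ is a perturbation of $a_j$ by $\delta\ge0$. Lemma \ref{l:continuity} then gives $\mu_f(R)=w(R)$.

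\emph{Radon property.} For Borel regularity, note that $\RR^n\in\mathcal E_\sigma$. Proposition \ref{p:method-one}(OR) then gives, for every $S$, a hull $B\in\mathcal E_{\sigma\delta}$, which is Borel. For finiteness on bounded sets, a bounded set lies in a single $h$-rectangle $R$, so its measure is at most $w(R)<\infty$.
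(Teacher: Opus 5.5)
Your proposal is correct and follows essentially the same route as the paper. Both arguments get finite subadditivity from a common grid refinement and Lemma \ref{l:var-additive}, and both get the open and compact approximations from the rectangles $(a,b+\delta\mathbf 1]$ and $(a+\delta\mathbf 1,b]$, using right continuity at the vertices; measurability then comes from (M), Borel regularity from (OR), and finiteness from containment of bounded sets in $h$-rectangles (the only detail to add is that $\delta$ must be small enough that $a_j+\delta<b_j$).
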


\begin{proof}
Write $w(R)=\var^\pm(f,R)$. Let $R,R_1,\dots,R_k$ be $h$-rectangles with
$R\subset R_1\cup\dots\cup R_k$. A common grid refinement partitions each of
these rectangles into grid cells. Every cell contained in $R$ is contained in
or disjoint from each $R_i$, and hence lies in some $R_i$. Lemma
\ref{l:var-additive} and the nonnegativity of $w$ on the cells give
$w(R)\leq\sum_iw(R_i)$. This proves condition (1) of Lemma
\ref{l:continuity}.

Fix $R=(a,b]$ and $\varepsilon>0$.  If
$R^+_\delta=(a,b+\delta\mathbf 1]$, then right continuity at the finitely
many vertices of $R$ gives $w(R^+_\delta)\to w(R)$ as
$\delta\downarrow0$.  Since $R^+_\delta$ is the disjoint union of $R$ and
finitely many $h$-rectangles, Lemma \ref{l:var-additive} and nonnegativity
show that those remaining rectangles have total weight
$w(R^+_\delta)-w(R)$.  For small $\delta$, this is at most $\varepsilon$,
and they cover
\[
 \prod_{j=1}^n(a_j,b_j+\delta)\setminus R.
\]
This proves condition (2) of Lemma \ref{l:continuity}.

For sufficiently small $\delta>0$, let
$R^-_\delta=(a+\delta\mathbf 1,b]$ and
$K_\delta=\prod_{j=1}^n[a_j+\delta,b_j]$.  Again,
$w(R^-_\delta)\to w(R)$ by right continuity.  The compact set $K_\delta$ is
contained in $R$, and $R\setminus K_\delta$ is covered by the rectangular
pieces of $R\setminus R^-_\delta$, whose total weight is
$w(R)-w(R^-_\delta)$.  This proves condition (3).  Lemma \ref{l:continuity}
now gives $\mu_f(R)=w(R)$.

For two $h$-rectangles $R$ and $R'$, the intersection $R'\cap R$ is an
$h$-rectangle or empty, while $R'\setminus R$ is a finite disjoint union of
$h$-rectangles.  Lemma \ref{l:var-additive} verifies the hypothesis of
Proposition \ref{p:method-one}(M), and $h$-rectangles are $\mu_f$ measurable.
Every open subset of $\RR^n$ is a countable union of $h$-rectangles. Hence
every Borel set is $\mu_f$ measurable.  Proposition
\ref{p:method-one}(OR) gives Borel regularity.  Finally, every bounded set lies
in an $h$-rectangle of finite weight because $f$ is real valued.  Therefore,
$\mu_f$ is a Radon measure.
\end{proof}

\begin{theorem}[Radon measures and Lebesgue--Stieltjes measures]\label{t:stieltjes-converse}
An outer measure $\mu$ on $\RR^n$ is a Radon measure if and only if
$\mu=\mu_f$ for some function $f$ satisfying Definition \ref{d:stieltjes}.
\end{theorem}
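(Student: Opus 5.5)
One direction is Theorem \ref{t:stieltjes}: if $f$ satisfies Definition \ref{d:stieltjes}, then $\mu_f$ is a Radon measure. For the converse, let $\mu$ be a Radon measure on $\RR^n$.

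\emph{Constructing $f$.} We build $f$ as a signed "distribution function" anchored at the origin. For $x\in\RR^n$, let $I_j(x)$ be $(0,x_j]$ if $x_j\geq0$ and $(x_j,0]$ if $x_j<0$, with $\sigma_j(x)=+1$ in the first case and $-1$ in the second. Define
\[
 f(x)=\Bigl(\prod_{j=1}^n\sigma_j(x)\Bigr)\,\mu\bigl(I_1(x)\times\dots\times I_n(x)\bigr),
\]
with the convention that $f(x)=0$ when some $x_j=0$ (the product set is then empty). The value is finite because the set is bounded.

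\emph{Key identity.} The central claim is that $\var^\pm(f,R)=\mu(R)$ for every $h$-rectangle $R$. First take $R$ inside a single closed orthant. In the positive orthant, inclusion–exclusion over the vertices, using finite additivity of $\mu$ on the Borel sets $\prod_j(0,v_j]$, gives the identity directly. For a general orthant, a one-variable computation does it. For fixed other coordinates, the map $t\mapsto \sigma(t)\,\mu(\cdots\times I(t)\times\cdots)$ is an "oriented" distribution function whose increment over $(s,t]$ equals the measure of the slab $(s,t]$, for any signs of $s$ and $t$. Because the difference operators in \eqref{mixed-difference} act one coordinate at a time, iterating this gives $\var^\pm(f,R)=\mu(R)$ for all $R$ with no need to split $R$ at the coordinate hyperplanes. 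If some case is awkward, splitting along $x_j=0$ and applying Lemma \ref{l:var-additive} gives a fallback. In particular, $\var^\pm(f,R)\geq0$.

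\emph{Right continuity.} As $h\downarrow0$ in $[0,\infty)^n$, each interval $I_j(x+h)$ converges to $I_j(x)$ monotonically on each side. If $x_j\geq0$, the intervals decrease to $(0,x_j]$. If $x_j<0$, the intervals $(x_j+h_j,0]$ increase to $(x_j,0]$. The sign $\sigma_j$ is eventually constant. We therefore apply continuity from above (finite measure on bounded sets) and continuity from below to $\mu$ on products, treating the mixed monotonicity by comparing with product sets that increase or decrease in each coordinate separately. This bookkeeping is the main obstacle, since the sets are neither increasing nor decreasing jointly. I would handle it by writing the difference between the sets at $x+h$ and at $x$ as a union of $n$ slab pieces, each of whose measure tends to $0$.

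\emph{Concluding $\mu=\mu_f$.} Now $\mu$ and $\mu_f$ agree on $h$-rectangles. Every open set is a countable disjoint union of $h$-rectangles, so the two agree on open sets. Both are Borel regular and finite on bounded sets, and Radon measures on $\RR^n$ are outer regular by open sets on Borel sets. I would instead argue via Method I maximality, which is cleaner. By Remark \ref{r:extension}, $\mu\leq\mu_f$, since $\mu$ extends $w$. Conversely, for $S$, choose a Borel hull $B$ of $S$. Then $\mu_f(B\cap Q)\leq\mu(B\cap Q)$ on each bounded $h$-rectangle $Q$ by Lemma \ref{l:uniqueness}: both restrict to measures on $\cM(\mathcal E)=$ Borel agreeing on $\mathcal E$, with $\sigma$-finite $w$. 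So $\mu$ and $\mu_f$ agree on Borel sets. Borel regularity of both then gives $\mu(S)=\mu(B)=\mu_f(B)\geq\mu_f(S)$, and with $\mu\leq\mu_f$ this gives equality.
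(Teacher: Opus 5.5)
Your proposal is correct and follows essentially the same route as the paper. Both use the same signed distribution function anchored at the origin, derive $\var^\pm(f,R)=\mu(R)$ from the one-variable increment identity (the paper expands $\prod_j(g_{b_j}-g_{a_j})=\chi_{(a,b]}$ all at once and uses finite additivity, while you iterate one coordinate at a time), prove right continuity by bounding the symmetric difference by $n$ slabs in a bounded box and applying continuity from above, and conclude via Lemma \ref{l:uniqueness} on Borel sets followed by Borel regularity. Using $\mu\le\mu_f$ from Remark \ref{r:extension} in the last step is only a cosmetic variation; Lemma \ref{l:uniqueness} already gives equality on all Borel sets, so the detour through $B\cap Q$ is unnecessary.
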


\begin{proof}
If $\mu=\mu_f$ for a function in Definition \ref{d:stieltjes}, then $\mu$ is
Radon by Theorem \ref{t:stieltjes}.  Conversely, let $\mu$ be a Radon measure.
For $t\in\RR$, set
\[
 I(t)=(\min\{0,t\},\max\{0,t\}],\qquad
 \epsilon(t)=\begin{cases}1,&t\geq0,\\-1,&t<0.\end{cases}
\]
Thus, $I(0)=\emptyset$.  Define a representing function based at the origin by
\begin{equation}\label{canonical-representative}
 f(x)=\left(\prod_{j=1}^n\epsilon(x_j)\right)
 \mu\left(\prod_{j=1}^n I(x_j)\right).
\end{equation}
The product rectangle is bounded, so $f$ is real valued.

For a one-dimensional variable $s$, put
\[
 g_t(s)=\begin{cases}
  \chi_{(0,t]}(s),&t\geq0,\\
  -\chi_{(t,0]}(s),&t<0.
 \end{cases}
\]
The identity $g_b-g_a=\chi_{(a,b]}$ holds whenever $a<b$.  Expanding one
difference in each coordinate, the alternating sum in
\eqref{mixed-difference} is therefore the same finite inclusion--exclusion
sum obtained from
\[
 \prod_{j=1}^n(g_{b_j}-g_{a_j})=\chi_{(a,b]}.
\]
Partition the bounded union of the rectangles in
\eqref{canonical-representative} into finitely many rectangles on which all
of the functions $g_{a_j}$ and $g_{b_j}$ are constant.  Finite additivity of
$\mu$ on this partition gives
\begin{equation}\label{representative-difference}
 \var^\pm(f,(a,b])=\mu((a,b]).
\end{equation}
In particular, the signed variation is nonnegative.

We next check right continuity. Fix $x\in\RR^n$, and let $0<\eta<1$ be
smaller than $|x_j|$ for every $j$ with $x_j\neq0$. For $h\in[0,\eta)^n$ and
every $j$,
\[
 \epsilon(x_j+h_j)=\epsilon(x_j)
 \qquad\text{and}\qquad
 I(x_j+h_j)\triangle I(x_j)=(x_j,x_j+h_j].
\]
Hence the rectangles in \eqref{canonical-representative} for $x$ and $x+h$
lie in the bounded rectangle $B=\prod_{j=1}^n(-|x_j|-1,|x_j|+1]$, and their
symmetric difference is contained in the union of the sets
$S_j(\eta):=\{s\in B:x_j<s_j\leq x_j+\eta\}$ with $1\leq j\leq n$. Thus,
\[
 |f(x+h)-f(x)|\leq\sum_{j=1}^n\mu(S_j(\eta))\quad\text{for all }h\in[0,\eta)^n.
\]
Each $S_j(\eta)$ decreases to $\emptyset$ as $\eta\downarrow0$ inside the set
$B$ of finite measure. Continuity from above gives $f(x+h)\to f(x)$ as $h\to0$
through $[0,\infty)^n$.

Hence $f$ satisfies Definition \ref{d:stieltjes}.  Theorem
\ref{t:stieltjes} and \eqref{representative-difference} show that $\mu_f$ and
$\mu$ agree on every $h$-rectangle.  The family of $h$-rectangles and the
empty set satisfies Proposition \ref{p:method-one}(M), the weight
$\var^\pm(f,\cdot)$ is countably subadditive by Theorem \ref{t:stieltjes}, and
$\RR^n$ is covered by countably many $h$-rectangles of finite weight.  Lemma
\ref{l:uniqueness} therefore gives equality on the Borel $\sigma$-algebra.
Both outer measures are Borel regular, so they agree on every subset of
$\RR^n$.
\end{proof}

\begin{example}\label{ex:stieltjes}
Let $g_j:\RR\to\RR$ be right-continuous and nondecreasing for $1\leq j\leq n$, and let $f(x)=\prod_jg_j(x_j)$. Then
\[
 \var^\pm(f,(a,b])=\prod_{j=1}^n\bigl(g_j(b_j)-g_j(a_j)\bigr),
\]
and $\mu_f$ represents the product of the one-dimensional Lebesgue--Stieltjes measures. Taking $g_j(t)=t$ for every $j$ gives $\var^\pm(f,R)=\vol R$. The measure $\Leb^n:=\mu_f$ is \emph{Lebesgue measure} on $\RR^n$, and Theorem \ref{t:stieltjes} shows that it is a Radon measure with $\Leb^n(R)=\vol R$ for every $h$-rectangle $R$. Taking every $g_j$ to be $\chi_{[0,\infty)}$ gives the Dirac mass at the origin, and taking every $g_j$ to be the Cantor distribution function gives the product Cantor measure.
\end{example}

\begin{example}[a measure on the diagonal]\label{ex:diagonal}
Let $n=2$ and $f(x_1,x_2)=\min\{x_1,x_2\}$. Let $\nu$ be the image of Lebesgue measure on $\RR$ under $t\mapsto(t,t)$, i.e.~$\nu(S)=\Leb^1(\{t\in\RR:(t,t)\in S\})$ for all $S\subset\RR^2$. Since $\Leb^1$ is Radon, so is $\nu$. Given an $h$-rectangle $R=(a_1,b_1]\times(a_2,b_2]$, choose $c<\min\{a_1,a_2\}$. For every vertex $v$ of $R$, $f(v)=c+\nu((c,v_1]\times(c,v_2])$. Since the signed variation of a constant vanishes, inclusion-exclusion gives
\[
 \var^\pm(f,R)=\nu(R)=\Leb^1((a_1,b_1]\cap(a_2,b_2]).
\]
The function $f$ is continuous, and it satisfies the hypotheses of Definition \ref{d:stieltjes}. As in the proof of Theorem \ref{t:stieltjes-converse}, Lemma \ref{l:uniqueness} and Borel regularity give $\mu_f=\nu$. Thus, $\mu_f$ is carried by the diagonal. In particular, $\mu_f$ is not a product of measures on $\RR$.
\end{example}

\subsection{Riesz representation}\label{ss:riesz}

The continuity principle does not require a metric. A classical instance is the Riesz representation theorem. Its usual proof defines a weight on open sets from a linear functional, proves countable subadditivity by compactness and partitions of unity, and then shows that open sets are measurable. Below, the first step is an application of Lemma \ref{l:continuity}, and the second is the measurability criterion in Remark \ref{r:measurability}. We state the version for functionals on vector-valued functions, which produces the total variation measure and the polar decomposition used in geometric measure theory. See Evans and Gariepy \cite[\S1.8]{EvansGariepy}. Steps that follow the standard proof are only indicated.

Let $\XX$ be a locally compact Hausdorff space in which every open set is $\sigma$-compact. This holds for $\RR^n$, for its open subsets, and for every proper metric space. Fix $d\geq1$, and write $|\cdot|$ for the Euclidean norm on $\RR^d$. For open $V\subset\XX$, let $C_c(V;\RR^d)$ denote the set of continuous functions $f:\XX\to\RR^d$ whose support is a compact subset of $V$. Let $\mathcal E$ be the family of open sets with compact closure. Since $\XX$ is locally compact, every compact set is contained in a set in $\mathcal E$.

\begin{theorem}[Riesz representation]\label{t:riesz}
Let $L:C_c(\XX;\RR^d)\to\RR$ be linear, and assume that
\begin{equation}\label{riesz-bounded}
 \sup\{L(f):f\in C_c(\XX;\RR^d),\ |f|\leq1,\ \operatorname{spt}f\subset K\}<\infty\quad\text{for every compact }K\subset\XX.
\end{equation}
For open $V\subset\XX$, put
\[
 w(V):=\sup\{L(f):f\in C_c(V;\RR^d),\ |f|\leq1\},
\]
and let $\mu$ be the Method I outer measure generated by $w$ on $\mathcal E$. Then the following hold.
\begin{enumerate}
\item $\mu(V)=w(V)$ for every open set $V$, $\mu(A)=\inf\{\mu(V):V\supset A\text{ open}\}$ for every $A\subset\XX$, and $\mu(K)<\infty$ for every compact $K$.
\item Every Borel set is $\mu$ measurable, and $\mu(V)=\sup\{\mu(K):K\subset V\text{ compact}\}$ for every open set $V$.
\item There is a $\mu$ measurable function $\sigma:\XX\to\RR^d$ with $|\sigma|=1$ $\mu$-a.e.\ such that
\[
 L(f)=\int_\XX f\cdot\sigma\,d\mu\quad\text{for all }f\in C_c(\XX;\RR^d).
\]
\end{enumerate}
If $d=1$ and $L(f)\geq0$ whenever $f\geq0$, then $\sigma=1$ $\mu$-a.e., and part (3) reads $L(f)=\int_\XX f\,d\mu$.
\end{theorem}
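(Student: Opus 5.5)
The plan is to apply Lemma \ref{l:continuity} to $w$ on $\mathcal E$ (open sets with compact closure), using Remark \ref{r:continuity-trivial} for condition (2). By \eqref{riesz-bounded}, $w(V)<\infty$ for $V\in\mathcal E$, since $\operatorname{spt}f\subset\overline V$. Monotonicity of $w$ on open sets is immediate.

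\textbf{Finite subadditivity (condition (1)).} If $V\subset V_1\cup\dots\cup V_k$ and $f\in C_c(V;\RR^d)$ with $|f|\leq1$, I take a partition of unity $\phi_i\in C_c(V_i)$, $0\leq\phi_i\leq 1$, with $\sum_i\phi_i=1$ on $\operatorname{spt}f$. Then $f=\sum_i\phi_if$, each $\phi_if\in C_c(V_i;\RR^d)$ has $|\phi_if|\leq1$, and $L(f)\leq\sum_iw(V_i)$.

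\textbf{Compact inner approximation (condition (3)).} Given $V\in\mathcal E$ and $\varepsilon>0$, I choose $f\in C_c(V;\RR^d)$ with $|f|\leq1$ and $L(f)>w(V)-\varepsilon$, and put $K=\operatorname{spt}f$. I must cover $V\setminus K$ by one set $W\in\mathcal E$ with $w(W)\leq\varepsilon$. The obvious choice $W=V\setminus K$ is open with compact closure, so the key is the superadditivity estimate $w(W)+L(f)\leq w(V)$. For $g\in C_c(W;\RR^d)$ with $|g|\leq1$, the supports of $f$ and $g$ are disjoint, so $|f+g|\leq1$ with support in $V$, and $L(g)+L(f)=L(f+g)\leq w(V)$. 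Lemma \ref{l:continuity} then gives $\mu(V)=w(V)$ for $V\in\mathcal E$.

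\textbf{Part (1).} For a general open $V$, I use $\sigma$-compactness to write $V$ as an increasing union of sets $V_j\in\mathcal E$. Then $w(V)=\sup_jw(V_j)$, because each test function has compact support lying in some $V_j$. Countable subadditivity of $\mu$ and monotonicity give $\mu(V)=w(V)$. Outer approximation follows from Proposition \ref{p:method-one}(OA), since $\mathcal E_\sigma$ consists of open sets. Finiteness on compacts holds because each compact set lies in some set of $\mathcal E$.

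\textbf{Part (2): measurability.} I would prove that open sets $U$ are measurable using Remark \ref{r:measurability}: for $E\in\mathcal E$ I need $\mu(E\cap U)+\mu(E\setminus U)\leq w(E)$. Choose $f\in C_c(E\cap U)$ with $|f|\leq1$ and $L(f)$ near $w(E\cap U)$. Then $E\setminus\operatorname{spt}f$ is open and contains $E\setminus U$. Choose $g$ supported in $E\setminus\operatorname{spt}f$ nearly realizing its weight. As above, $L(f)+L(g)\leq w(E)$. Inner regularity on open sets follows from taking $K=\operatorname{spt}f$ for near-optimal $f$, together with monotonicity.

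\textbf{Part (3).} This follows the standard route indicated by Evans and Gariepy. For each unit vector $e$, I define $\lambda_e(\phi)=L(\phi e)$ on nonnegative scalar $\phi$ and show $|L(f)|\leq\int|f|\,d\mu$. I then obtain bounded functionals, represent them via densities with $|\cdot|\leq1$ by a Radon--Nikodym argument, and prove $|\sigma|=1$ a.e.\ by testing against near-optimal functions. In the positive case $d=1$, $w(V)=\sup\{L(f):0\leq f\leq1\}$ yields $\sigma=1$.

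\textbf{Main obstacle.} I expect part (3) to be the main obstacle. Specifically, showing that the linear functionals $\phi\mapsto L(\phi e)$ are dominated by $\int\phi\,d\mu$ (via Lusin-type approximation) and producing $\sigma$ with $|\sigma|=1$ rather than $\leq1$ is the lengthiest step. The paper only indicates these steps.
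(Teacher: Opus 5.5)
Your route is the paper's: you verify the three hypotheses of Lemma \ref{l:continuity} on $\mathcal E$ in the same way, prove measurability through Remark \ref{r:measurability}, and outline part (3) along Evans--Gariepy. The genuine gap is in part (1), in the passage from $V\in\mathcal E$ to an arbitrary open set $V=\bigcup_jV_j$ with $V_1\subset V_2\subset\cdots$ in $\mathcal E$.

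Monotonicity gives $\mu(V)\geq\mu(V_j)=w(V_j)$ for every $j$, hence $\mu(V)\geq w(V)$. Countable subadditivity, however, gives only $\mu(V)\leq\sum_j\mu(V_j)=\sum_jw(V_j)$. This sum is infinite as soon as some $w(V_j)>0$, and it is not the bound $\mu(V)\leq\sup_jw(V_j)$ that you need. When $V$ is not relatively compact, $V\notin\mathcal E$, so there is no single-set cover to fall back on. The natural covers by relatively compact open sets do not help either: the annuli $V_{j+1}\setminus\overline{V_{j-1}}$, for an exhaustion with $\overline{V_{j-1}}\subset V_j$, count their overlaps twice. The upper bound $\mu(V)\leq w(V)$ needs an additivity input, namely continuity from below, $\mu(V)=\lim_j\mu(V_j)$. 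That is available only once the $V_j$ are known to be $\mu$ measurable.

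The repair is to reverse the order of your parts (1) and (2), which is what the paper does. Your measurability test involves only sets $E\in\mathcal E$ and the bounds $\mu(E\cap U)\leq w(E\cap U)$ and $\mu(E\setminus U)\leq w(E\setminus\operatorname{spt}f)$. Both $E\cap U$ and $E\setminus\operatorname{spt}f$ again belong to $\mathcal E$, so this argument does not use part (1) for general open sets. Prove first that open sets are $\mu$ measurable; continuity from below then gives $\mu(V)=\lim_jw(V_j)=w(V)$.

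A smaller point concerns inner regularity. There you need $\mu(\operatorname{spt}f)\geq L(f)$, but monotonicity of $\mu$ only compares $\mu(\operatorname{spt}f)$ with $\mu(V)$ in the trivial direction. The inequality comes from outer approximation. Every open $W\supset\operatorname{spt}f$ satisfies $\mu(W)=w(W)\geq L(f)$, so the outer approximation in part (1) gives $\mu(\operatorname{spt}f)\geq L(f)$.
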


\begin{proof}[Sketch of proof]
By \eqref{riesz-bounded}, $w$ is finite on $\mathcal E$. Functions with disjoint supports can be added without increasing the norm. Thus, if $E\in\mathcal E$ and $f\in C_c(E;\RR^d)$ with $|f|\leq1$, then $f+g\in C_c(E;\RR^d)$ and $|f+g|\leq1$ for every $g\in C_c(E\setminus\operatorname{spt}f;\RR^d)$ with $|g|\leq1$. Taking the supremum over $g$ gives
\begin{equation}\label{riesz-disjoint}
 L(f)+w(E\setminus\operatorname{spt}f)\leq w(E).
\end{equation}

We verify the three conditions of Lemma \ref{l:continuity} for $w$ on $\mathcal E$. For condition (1) of Lemma \ref{l:continuity}, let $E\subset E_1\cup\dots\cup E_k$ with $E,E_1,\dots,E_k\in\mathcal E$, and let $f\in C_c(E;\RR^d)$ with $|f|\leq1$. Choose a partition of unity $h_1,\dots,h_k$ on $\operatorname{spt}f$ with $h_i\in C_c(E_i)$, $0\leq h_i$, $\sum_ih_i\leq1$, and $\sum_ih_i=1$ on $\operatorname{spt}f$. Then $f=\sum_ih_if$ with $h_if\in C_c(E_i;\RR^d)$ and $|h_if|\leq1$. Hence $L(f)\leq\sum_iw(E_i)$. Condition (2) of Lemma \ref{l:continuity} holds with $U=E$. For condition (3) of Lemma \ref{l:continuity}, given $E\in\mathcal E$ and $\varepsilon>0$, choose $f\in C_c(E;\RR^d)$ with $|f|\leq1$ and $L(f)>w(E)-\varepsilon$, and put $K=\operatorname{spt}f$. Then $E\setminus K\in\mathcal E$, and \eqref{riesz-disjoint} gives $w(E\setminus K)<\varepsilon$. By Lemma \ref{l:continuity}, $\mu(E)=w(E)$ for every $E\in\mathcal E$.

Let $A\subset\XX$ be open, let $E\in\mathcal E$, and let $\varepsilon>0$. Choose $f\in C_c(E\cap A;\RR^d)$ with $|f|\leq1$ and $L(f)>w(E\cap A)-\varepsilon$. The set $E\setminus\operatorname{spt}f$ belongs to $\mathcal E$ and contains $E\setminus A$. Since $E\cap A\in\mathcal E$, \eqref{riesz-disjoint} gives
\[
 \mu(E\cap A)+\mu(E\setminus A)
 \leq w(E\cap A)+w(E\setminus\operatorname{spt}f)
 < L(f)+\varepsilon+w(E)-L(f)=w(E)+\varepsilon.
\]
By Remark \ref{r:measurability}, $A$ is $\mu$ measurable. Hence every Borel set is $\mu$ measurable.

Let $V$ be open. Since $V$ is $\sigma$-compact and $\XX$ is locally compact, $V$ is the union of an increasing sequence $E_1\subset E_2\subset\cdots$ of sets in $\mathcal E$. Every $f\in C_c(V;\RR^d)$ has support in some $E_j$. Continuity from below gives $\mu(V)=\lim_j\mu(E_j)=\lim_jw(E_j)=w(V)$. Countable unions of sets in $\mathcal E$ are open, and Proposition \ref{p:method-one}(OA) gives outer regularity. Every compact set lies in a set $E\in\mathcal E$, and it has measure at most $w(E)<\infty$. If $f\in C_c(V;\RR^d)$ with $|f|\leq1$, then every open set containing $\operatorname{spt}f$ has measure at least $L(f)$, and outer regularity gives $\mu(\operatorname{spt}f)\geq L(f)$. This proves inner regularity on open sets and completes parts (1) and (2) of the theorem.

The remaining steps follow the standard proof \cite[\S1.8]{EvansGariepy}. Writing $f\in C_c(\XX;\RR^d)$ as a finite sum of functions supported in the superlevel sets $\{|f|>t\}$ and applying part (1) of the theorem to these open sets gives
\[
 |L(f)|\leq\int_\XX|f|\,d\mu\quad\text{for all }f\in C_c(\XX;\RR^d).
\]
The measure $\mu$ is $\sigma$-finite, and $C_c(\XX;\RR^d)$ is dense in $L^1(\mu;\RR^d)$ by parts (1) and (2). The duality between $L^1(\mu;\RR^d)$ and $L^\infty(\mu;\RR^d)$ gives $\sigma$ with $|\sigma|\leq1$ $\mu$-a.e.\ and $L(f)=\int f\cdot\sigma\,d\mu$. For $E\in\mathcal E$,
\[
 \mu(E)=w(E)=\sup\left\{\int_\XX f\cdot\sigma\,d\mu:f\in C_c(E;\RR^d),\ |f|\leq1\right\}\leq\int_E|\sigma|\,d\mu,
\]
and $\mu(E)<\infty$ forces $|\sigma|=1$ $\mu$-a.e.\ on $E$. The sets in $\mathcal E$ cover $\XX$ by countably many members, which proves part (3). If $d=1$ and $L$ is positive, then $\int f\sigma\,d\mu\geq0$ for all $f\geq0$, and $\sigma=1$ $\mu$-a.e.
\end{proof}

\section{Kolmogorov extension}\label{s:kolmogorov}

Kolmogorov's extension theorem is the basic existence theorem for probability measures specified by finite-dimensional distributions. We first set up the notation.

Let $I$ be a nonempty index set, and let $S$ be a metric space. Below, $S$ will be either $\RR$ or a compact metric space. For every finite set $F\subset I$, write
\[
 \pi_F:S^I\to S^F,\qquad \pi_F(x)=x|_F,
\]
for the coordinate projection. If $F\subset G\subset I$ are finite, write
\[
 \pi_F^G:S^G\to S^F,\qquad \pi_F^G(x)=x|_F,
\]
for the projection that forgets the coordinates in $G\setminus F$. Thus,
$\pi_F=\pi_F^G\circ\pi_G$. For a Borel set $B\subset S^F$, the set
$\pi_F^{-1}(B)$ is called a \emph{cylinder determined by $F$}. Membership in
it depends only on the coordinates indexed by $F$. If $F\subset G$, then
\begin{equation}\label{cylinder-lift}
 \pi_F^{-1}(B)
 =\pi_G^{-1}\bigl((\pi_F^G)^{-1}(B)\bigr).
\end{equation}
The \emph{product $\sigma$-algebra} on $S^I$ is the $\sigma$-algebra
generated by the cylinders, equivalently the smallest $\sigma$-algebra for
which all finite-coordinate projections $\pi_F$ are measurable.

A family $(\mu_F)$ of Borel probability measures on $S^F$, indexed by the finite sets $F\subset I$, is \emph{consistent} if
\begin{equation}\label{consistent-family}
 \mu_G\bigl((\pi_F^G)^{-1}(B)\bigr)=\mu_F(B)
\end{equation}
whenever $F$ and $G$ are finite subsets of $I$ with $F\subset G$ and $B\subset S^F$ is Borel. In words, forgetting coordinates from a $G$-dimensional distribution recovers the prescribed $F$-dimensional distribution.

For example, let $\rho$ be a Borel probability measure on $\RR$, and for finite $F\subset I$ let $\mu_F=\bigotimes_{i\in F}\rho$ be the product of copies of $\rho$, one for each coordinate indexed by $F$. In the notation of Section \ref{ss:tonelli}, $\mu_F$ is obtained by iterating $\otimes$. Since the Borel $\sigma$-algebra of $\RR^F$ is the product of the Borel $\sigma$-algebras of the factors, Lemma \ref{l:uniqueness} shows that $\mu_F$ is the unique Borel probability measure on $\RR^F$ with
\begin{equation}\label{product-law}
 \mu_F\Bigl(\prod_{i\in F}B_i\Bigr)=\prod_{i\in F}\rho(B_i)\quad\text{for all Borel sets }B_i\subset\RR.
\end{equation}
Forgetting the coordinates in $G\setminus F$ multiplies the right side of \eqref{product-law} by factors $\rho(\RR)=1$, and Lemma \ref{l:uniqueness} shows that the family $(\mu_F)$ is consistent. When $I=\mathbb N$, Theorem \ref{t:kolmogorov} below produces a probability measure on $\RR^{\mathbb N}$ under which the coordinate maps $x\mapsto x_i$ are independent and have common law $\rho$.

In the extension-first organization of this paper, this is again an extension problem. Consistency prescribes a weight on cylinders, and the task is to prove that the Method I outer measure retains those weights. When $S$ is compact, the continuity principle verifies the required countable subadditivity (Lemma \ref{l:kolmogorov-compact}), and no premeasure on an algebra of cylinders is needed. Together with the uniqueness argument in the proof of Theorem \ref{t:kolmogorov}, this gives the theorem with $\RR$ replaced by any compact metric space. The case $S=\RR$, which is needed for sequences of independent random variables with an arbitrary common law on $\RR$, follows by mapping $\RR$ homeomorphically onto $(0,1)\subset[0,1]$ and using the outer measure to show that $(0,1)^I$ has full measure in $[0,1]^I$ (Lemma \ref{l:full-outer-measure}). As in Folland \cite[Chapter 10]{Folland}, we first treat compact factors and then pass to $\RR$ through a compactification. Folland handles the compact case with the Riesz representation theorem (compare Nelson \cite{Nelson1964}). Here, Lemma \ref{l:kolmogorov-compact} obtains it from Method I and the continuity principle, and Lemma \ref{l:full-outer-measure} returns to $\RR$ through a set that need not be measurable.

\begin{lemma}[compact factors]\label{l:kolmogorov-compact}
Let $S$ be a compact metric space, and let $(\nu_F)$ be a consistent family of Borel probability measures on the spaces $S^F$. Let $\mathcal E$ consist of the empty set and all cylinders in $S^I$, and define
\begin{equation}\label{kolmogorov-weight}
 w\bigl(\pi_F^{-1}(B)\bigr):=\nu_F(B).
\end{equation}
Then $w$ is well defined, the Method I outer measure $\mu_w$ satisfies $\mu_w=w$ on $\mathcal E$, every cylinder is $\mu_w$ measurable, and $\mu_w$ restricts to a probability measure on the product $\sigma$-algebra of $S^I$.
\end{lemma}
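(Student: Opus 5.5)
Well-definedness comes first. If $\pi_F^{-1}(B)=\pi_G^{-1}(C)$, put $H=F\cup G$. By \eqref{cylinder-lift}, $\pi_H^{-1}((\pi_F^H)^{-1}(B))=\pi_H^{-1}((\pi_G^H)^{-1}(C))$. Since $\pi_H$ is surjective ($S\neq\emptyset$), this gives $(\pi_F^H)^{-1}(B)=(\pi_G^H)^{-1}(C)$. Consistency \eqref{consistent-family} then yields $\nu_F(B)=\nu_H(\cdot)=\nu_G(C)$.

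The same lifting shows that any finitely many cylinders can be written over one common finite $H$. On cylinders determined by a fixed $H$, $w$ is just $\nu_H$ transported. Hence $w$ is finitely subadditive, which is condition (1) of Lemma \ref{l:continuity}. It is also finitely additive, with $\cap$ and $\setminus$ of cylinders again cylinders. Taking $F_0=E'\cap E$ and $F_1=E'\setminus E$ over a common $H$ gives $w(F_0)+w(F_1)=w(E')$. So Proposition \ref{p:method-one}(M) shows that every cylinder is $\mu_w$ measurable.

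For countable subadditivity I will apply Lemma \ref{l:continuity} with the product topology on $S^I$. By Tychonoff, $S^I$ is compact, and each $\pi_F$ is continuous. Fix a cylinder $E=\pi_F^{-1}(B)$ and $\varepsilon>0$. Since $S^F$ is a compact metric space, $\nu_F$ is a finite Borel measure on a metric space and hence regular. So there are a compact $K'\subset B$ and an open $U'\supset B$ with $\nu_F(U'\setminus K')<\varepsilon$. Set $U=\pi_F^{-1}(U')$, which is open, and $K=\pi_F^{-1}(K')$, which is closed in the compact space $S^I$ and therefore compact. The differences $U\setminus E=\pi_F^{-1}(U'\setminus B)$ and $E\setminus K=\pi_F^{-1}(B\setminus K')$ are single cylinders of weight less than $\varepsilon$. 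This verifies conditions (2) and (3). Lemma \ref{l:continuity} then gives countable subadditivity and $\mu_w=w$ on $\mathcal E$.

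Finally, $\mu_w(S^I)=w(\pi_F^{-1}(S^F))=1$. By Theorem \ref{t:measure-outer-correspondence}(1), $\mu_w$ restricted to the product $\sigma$-algebra, which lies inside $\cM_{\mu_w}$, is a probability measure. The main obstacle is conditions (2) and (3). These rest on two facts: regularity of finite Borel measures on the compact metric spaces $S^F$, which I would cite or prove by the standard argument that the class of sets approximable from inside by closed sets and from outside by open sets is a $\sigma$-algebra containing the open sets, and compactness of $S^I$ via Tychonoff. Compactness of $S$ is used exactly here, since a closed subset of $S^I$ must be compact.
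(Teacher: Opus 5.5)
Your proof is correct and follows essentially the same route as the paper's. You establish well-definedness by lifting to the common index set $H=F\cup G$, verify conditions (1)--(3) of Lemma~\ref{l:continuity} using Tychonoff and the open/closed regularity of $\nu_F$, obtain measurability from Proposition~\ref{p:method-one}(M) via additivity of $\nu_H$, and conclude with Theorem~\ref{t:measure-outer-correspondence}(1). Your explicit use of the surjectivity of $\pi_H$ in the well-definedness step is a point the paper leaves implicit, and it is a welcome one.
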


\begin{proof}
Suppose that $\pi_F^{-1}(B)=\pi_G^{-1}(D)$, and put $H=F\cup G$. Viewed in $S^H$, the same equality reads $(\pi_F^H)^{-1}(B)=(\pi_G^H)^{-1}(D)$. Consistency gives
\[
 \nu_F(B)
 =\nu_H\bigl((\pi_F^H)^{-1}(B)\bigr)
 =\nu_H\bigl((\pi_G^H)^{-1}(D)\bigr)
 =\nu_G(D).
\]
Thus, $w$ is well defined.

We verify the hypotheses of Lemma \ref{l:continuity} in $S^I$ with the product topology. By Tychonoff's theorem, $S^I$ is compact. For finite subadditivity, suppose that $\pi_F^{-1}(B)$ is covered by $\pi_{F_1}^{-1}(B_1),\dots,\pi_{F_m}^{-1}(B_m)$, and put $H=F\cup F_1\cup\dots\cup F_m$. By \eqref{cylinder-lift}, each of these cylinders is $\pi_H^{-1}$ of a Borel set in $S^H$, and by consistency its weight is the $\nu_H$ measure of that set. The corresponding sets in $S^H$ satisfy the same covering relation, because every point of $S^H$ is $\pi_H(y)$ for some $y\in S^I$. The desired inequality is therefore finite subadditivity of $\nu_H$.

For the approximation conditions, recall that a finite Borel measure on a metric space is outer regular by open sets and inner regular by closed sets. See Billingsley \cite[Theorem 1.1]{Billingsley1999} and Kechris \cite[Theorem 17.10]{Kechris}, where the proof shows that the Borel sets with both approximations form a $\sigma$-algebra containing the closed sets. Since $S^F$ is compact, closed subsets of $S^F$ are compact.

Given $\varepsilon>0$, choose an open set $V\supset B$ and a compact set $K\subset B$ in $S^F$ with $\nu_F(V\setminus B)<\varepsilon$ and $\nu_F(B\setminus K)<\varepsilon$. Then $\pi_F^{-1}(V)$ is open in $S^I$ and contains $\pi_F^{-1}(B)$, and $\pi_F^{-1}(V)\setminus\pi_F^{-1}(B)=\pi_F^{-1}(V\setminus B)$ has weight less than $\varepsilon$. The cylinder $\pi_F^{-1}(K)$ is closed in the compact space $S^I$, hence compact, and $\pi_F^{-1}(B)\setminus\pi_F^{-1}(K)$ has weight less than $\varepsilon$. Lemma \ref{l:continuity} gives $\mu_w=w$ on $\mathcal E$.

Let $\pi_F^{-1}(B),\pi_G^{-1}(D)\in\mathcal E$, and put $H=F\cup G$. In $S^H$, the sets corresponding to their intersection and difference are Borel. Additivity of $\nu_H$ therefore verifies the hypothesis of Proposition \ref{p:method-one}(M), and every cylinder is $\mu_w$ measurable. Hence the product $\sigma$-algebra of $S^I$ is contained in $\cM_{\mu_w}$. Since $S^I=\pi_{\{i\}}^{-1}(S)$ for any $i\in I$, we have $\mu_w(S^I)=1$.
\end{proof}

\begin{lemma}[product sets of full outer measure]\label{l:full-outer-measure}
In the setting of Lemma \ref{l:kolmogorov-compact}, let $U\subset S$ be a Borel set with $\nu_{\{i\}}(U)=1$ for every $i\in I$, and put $Z=U^I$. Then $\mu_w(Z)=1$. Moreover, the set function
\[
 A\cap Z\longmapsto\mu_w(A)\quad\text{for all }A\text{ in the product }\sigma\text{-algebra of }S^I
\]
is a well-defined probability measure on the trace of that $\sigma$-algebra on $Z$.
\end{lemma}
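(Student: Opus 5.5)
To show $\mu_w(Z)=1$, I will work directly from the Method I formula \eqref{method-one}. Since $\mu_w(Z)\le\mu_w(S^I)=1$, only $\mu_w(Z)\ge1$ needs proof. So take any countable cover $Z\subset\bigcup_k C_k$ by cylinders $C_k=\pi_{F_k}^{-1}(B_k)$, and aim to show $\sum_k w(C_k)\ge1$.

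The key reduction is to pass to countably many coordinates. Let $J=\bigcup_k F_k$, a countable subset of $I$ (if $J$ is empty, add one index). Consider the set
\[
 Z_J=\{x\in S^I:x_i\in U\text{ for all }i\in J\}=\bigcap_{i\in J}\pi_{\{i\}}^{-1}(U).
\]
This is a countable intersection of cylinders, each of $\mu_w$ measure $\nu_{\{i\}}(U)=1$ by Lemma \ref{l:kolmogorov-compact}. Cylinders are $\mu_w$ measurable, so each complement $\pi_{\{i\}}^{-1}(S\setminus U)$ is $\mu_w$ null. Countable subadditivity then gives $\mu_w(S^I\setminus Z_J)=0$, hence $\mu_w(Z_J)=1$.

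Next I claim $Z_J\subset\bigcup_k C_k$. Take $x\in Z_J$. Fix any $u\in U$; such a point exists since $\nu_{\{i\}}(U)=1$ forces $U\neq\emptyset$. Define $y$ by $y_i=x_i$ for $i\in J$ and $y_i=u$ otherwise. Then $y\in Z$, so $y\in C_k$ for some $k$. Because $C_k$ depends only on the coordinates in $F_k\subset J$, where $x$ and $y$ agree, we get $x\in C_k$. Therefore
\[
 1=\mu_w(Z_J)\le\sum_k\mu_w(C_k)=\sum_k w(C_k),
\]
using Lemma \ref{l:kolmogorov-compact} for $\mu_w=w$ on cylinders. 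Taking the infimum over covers gives $\mu_w(Z)=1$. I expect this step to be the main obstacle, since $Z$ itself need not be measurable; the point is that every countable cover only sees countably many coordinates.

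For the second assertion, I will use the standard trace argument with the measurable hull property. Let $A,A'$ be product-measurable with $A\cap Z=A'\cap Z$. Then $Z\cap(A\triangle A')=\emptyset$, so $Z\subset S^I\setminus(A\triangle A')$. This gives $1=\mu_w(Z)\le1-\mu_w(A\triangle A')$, so $\mu_w(A\triangle A')=0$ and $\mu_w(A)=\mu_w(A')$; the map is well defined.

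For countable additivity, take pairwise disjoint traces $A_k\cap Z$. Replace $A_k$ by $A_k\setminus\bigcup_{j<k}A_j$; this does not change the traces, because the traces were already disjoint. The sets $A_k$ are then disjoint and $\bigl(\bigcup_k A_k\bigr)\cap Z$ is the union of the traces, so additivity of $\mu_w$ on the product $\sigma$-algebra gives additivity of the trace set function. Finally, the total mass is $\mu_w(S^I)=1$.
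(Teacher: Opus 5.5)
Your proof is correct. The first half is essentially the paper's argument: the same countable index set $J$, the same full-measure set $Z_J$ (the paper's $W_J$), and the same extension of $x|_J$ by a point of $U$. Your justification of $\mu_w(Z_J)=1$ through null complements spells out what the paper states in one line.

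The second half takes a different route. The paper uses measurability of $A$ to split $Z$:
\[
\mu_w(Z)=\mu_w(Z\cap A)+\mu_w(Z\setminus A)\le\mu_w(A)+\mu_w(S^I\setminus A)=1 .
\]
This yields the stronger identity $\mu_w(A\cap Z)=\mu_w(A)$, so well-definedness is automatic. Countable additivity then comes from a second application of Carath\'eodory's theorem, to the outer measure $\mu_w\res Z$, after checking that every product-measurable set is $\mu_w\res Z$ measurable. You instead run the classical trace argument for a set of full outer measure. A product-measurable set disjoint from $Z$ must be null, which gives well-definedness via $\mu_w(A\triangle A')=0$. Countable additivity is then pulled back from $\mu_w$ on the product $\sigma$-algebra after disjointifying the representatives $A_k$.

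Your version is more elementary. It uses only that $\mu_w$ is a probability measure on the product $\sigma$-algebra and that $\mu_w(Z)=1$. The paper's version stays inside its outer-measure framework. It identifies the value on $A\cap Z$ with the outer measure of the trace set itself, and it exhibits the trace measure as a restriction of the complete measure induced by $\mu_w\res Z$. Either suffices for Theorem \ref{t:kolmogorov}, which only evaluates the set function on traces of cylinders.
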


\begin{proof}
Consider any countable cylinder cover $Z\subset\bigcup_j\pi_{F_j}^{-1}(B_j)$, and put $J=\bigcup_jF_j$. The set
\[
 W_J:=\{y\in S^I:y_i\in U\text{ for every }i\in J\}
\]
is a countable intersection of the cylinders $\pi_{\{i\}}^{-1}(U)$ with $i\in J$, each of measure $1$. Hence $\mu_w(W_J)=1$. Moreover, $W_J$ is contained in the given cover. Indeed, if $y\in W_J$, extend $y|_J$ to a point $z\in Z$, which is possible because $U\neq\emptyset$. Some $\pi_{F_j}^{-1}(B_j)$ contains $z$. Since that cylinder depends only on coordinates in $F_j\subset J$, it also contains $y$. Therefore, every countable cylinder cover of $Z$ has total weight at least $1$, and $\mu_w(Z)=1$.

Let $A$ be in the product $\sigma$-algebra of $S^I$. Then $A$ is $\mu_w$ measurable, and
\[
 \mu_w(Z)=\mu_w(Z\cap A)+\mu_w(Z\setminus A)\leq\mu_w(A)+\mu_w(S^I\setminus A)=1=\mu_w(Z).
\]
Hence $\mu_w(Z\cap A)=\mu_w(A)$. Evaluating $\mu_w=\mu_w\res A+\mu_w\res(S^I\setminus A)$ on sets of the form $T\cap Z$ shows that $A$ is $\mu_w\res Z$ measurable, and $Z$ is $\mu_w\res Z$ measurable trivially. Thus, the displayed set function is the restriction of the measure $(\mu_w\res Z)|_{\cM_{\mu_w\res Z}}$ to a $\sigma$-algebra contained in $\cM_{\mu_w\res Z}$, and it is a measure by Theorem \ref{t:measure-outer-correspondence}(1).
\end{proof}

When $I$ is uncountable and $U\neq S$, the set $U^I$ does not belong to the product $\sigma$-algebra, because every set in that $\sigma$-algebra is determined by countably many coordinates. The proof of Lemma \ref{l:full-outer-measure} uses only the outer measure.

\begin{theorem}[Kolmogorov]\label{t:kolmogorov}
Let $I$ be a nonempty index set of arbitrary cardinality. For every consistent family $(\mu_F)$ of Borel probability measures on the spaces $\RR^F$, there is a unique probability measure $\mu$ on the product $\sigma$-algebra on $\RR^I$ such that
\[
 \mu\bigl(\pi_F^{-1}(B)\bigr)=\mu_F(B)
\]
for every finite $F\subset I$ and every Borel set $B\subset\RR^F$.
\end{theorem}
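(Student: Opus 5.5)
Following the route announced before Lemma \ref{l:kolmogorov-compact}, we transfer the problem to the compact space $S=[0,1]$ through a homeomorphism $\phi:\RR\to(0,1)$, for instance $\phi(t)=\tfrac12+\tfrac1\pi\arctan t$. For finite $F\subset I$, let $\phi_F:\RR^F\to(0,1)^F$ act coordinatewise, and put
\[
 \nu_F(B):=\mu_F\bigl(\phi_F^{-1}(B\cap(0,1)^F)\bigr)\quad\text{for Borel }B\subset[0,1]^F.
\]
Each $\nu_F$ is a Borel probability measure on $[0,1]^F$, since $\phi_F$ is a homeomorphism onto an open subset. Consistency of $(\nu_F)$ follows from consistency of $(\mu_F)$, because $\phi$ commutes with the coordinate projections: $(\pi_F^G)^{-1}(B)\cap(0,1)^G$ corresponds under $\phi_G$ to $(\pi_F^G)^{-1}(\phi_F^{-1}(B\cap(0,1)^F))$. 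Moreover,
\[
 \nu_{\{i\}}((0,1))=\mu_{\{i\}}(\RR)=1\quad\text{for every }i.
\]
Lemma \ref{l:kolmogorov-compact} then gives the outer measure $\mu_w$ on $[0,1]^I$. Lemma \ref{l:full-outer-measure}, applied with $U=(0,1)$ and $Z=(0,1)^I$, gives a probability measure $\lambda$ on the trace of the product $\sigma$-algebra on $Z$, with $\lambda(A\cap Z)=\mu_w(A)$.

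Next, let $\Phi:\RR^I\to Z$ be the coordinatewise map, which is a bijection. It maps cylinders onto traces of cylinders: $\Phi(\pi_F^{-1}(B))=\pi_F^{-1}(\phi_F(B))\cap Z$, with $\phi_F(B)$ Borel in $[0,1]^F$. Hence $\Phi$ identifies the product $\sigma$-algebra on $\RR^I$ with the trace $\sigma$-algebra on $Z$. Indeed, the trace of a generated $\sigma$-algebra is generated by the traces of the generators, and the traces of cylinders are exactly the images of cylinders. Define $\mu(A):=\lambda(\Phi(A))$. Then
\[
 \mu(\pi_F^{-1}(B))=\mu_w\bigl(\pi_F^{-1}(\phi_F(B))\bigr)=\nu_F(\phi_F(B))=\mu_F(B),
\]
where the middle equality is $\mu_w=w$ on cylinders. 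This proves existence.

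For uniqueness we use Lemma \ref{l:uniqueness} with $\mathcal E$ the empty set together with all cylinders in $\RR^I$, and $w(\pi_F^{-1}(B))=\mu_F(B)$. This $w$ is well defined by the argument in Lemma \ref{l:kolmogorov-compact}, and it is $\sigma$-finite because $\RR^I$ is a cylinder of weight $1$. Hypothesis (M) of Proposition \ref{p:method-one} holds exactly as before, by lifting two cylinders to a common $H=F\cup G$ and using additivity of $\mu_H$. For hypothesis (E), the measure $\mu$ just constructed is countably subadditive and agrees with $w$ on $\mathcal E$. By Remark \ref{r:extension}, $w$ is therefore countably subadditive on $\mathcal E$. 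Lemma \ref{l:uniqueness} then says that any probability measure on the product $\sigma$-algebra with the prescribed cylinder values equals $\mu_w|_{\cM(\mathcal E)}$, so such a measure is unique.

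The main obstacle is the bookkeeping that $\Phi$ matches the two $\sigma$-algebras and that the trace measure of Lemma \ref{l:full-outer-measure} is computed correctly on cylinders. Here we must be careful that $\pi_F^{-1}(\phi_F(B))$, a cylinder in $[0,1]^I$, has $\mu_w$-measure $\nu_F(\phi_F(B))=\mu_F(B)$, because $\phi_F(B)\subset(0,1)^F$. Once that identification is written down, the remaining steps are direct applications of Lemmas \ref{l:kolmogorov-compact}, \ref{l:full-outer-measure}, and \ref{l:uniqueness}.
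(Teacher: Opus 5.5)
Your proposal is correct and follows essentially the same route as the paper's proof: transfer to $[0,1]^I$ via a coordinatewise homeomorphism $\phi:\RR\to(0,1)$, apply Lemma \ref{l:kolmogorov-compact} and Lemma \ref{l:full-outer-measure} with $Z=(0,1)^I$, transport the trace measure back by $\Phi^{-1}$, and get uniqueness from Lemma \ref{l:uniqueness}, using the constructed measure to supply countable subadditivity of the cylinder weight. The only cosmetic difference is that you identify the whole trace $\sigma$-algebra on $Z$ with the image of the product $\sigma$-algebra under $\Phi$, whereas the paper only uses that the trace contains the product $\sigma$-algebra of $Z$ and that $\Phi$ is bimeasurable; both suffice.
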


\begin{proof}
Fix a homeomorphism $\phi:\RR\to(0,1)$. For finite $F\subset I$, let $\phi_F:\RR^F\to(0,1)^F$ act coordinatewise, and define a Borel probability measure $\nu_F$ on $[0,1]^F$ by
\[
 \nu_F(B):=\mu_F\bigl(\phi_F^{-1}(B\cap(0,1)^F)\bigr).
\]
The family $(\nu_F)$ is again consistent, since the maps $\phi_F$ commute with the projections $\pi_F^G$. Let $\mu_w$ be the outer measure on $[0,1]^I$ given by Lemma \ref{l:kolmogorov-compact} with $S=[0,1]$, and put $Z=(0,1)^I$. Since $\nu_{\{i\}}((0,1))=1$ for every $i$, Lemma \ref{l:full-outer-measure} gives a probability measure $A\cap Z\mapsto\mu_w(A)$ on the trace on $Z$ of the product $\sigma$-algebra of $[0,1]^I$. This trace $\sigma$-algebra contains the cylinders of $Z$, and hence the product $\sigma$-algebra of $Z$.

The coordinatewise map $\Phi:\RR^I\to Z$, $\Phi(x)_i=\phi(x_i)$, is a bijection, and $\Phi$ and $\Phi^{-1}$ are measurable for the product $\sigma$-algebras. Transporting the trace measure by $\Phi^{-1}$ gives a probability measure $\mu$ on the product $\sigma$-algebra of $\RR^I$. Since $\Phi(\pi_F^{-1}(B))=\pi_F^{-1}(\phi_F(B))\cap Z$,
\[
 \mu\bigl(\pi_F^{-1}(B)\bigr)=w\bigl(\pi_F^{-1}(\phi_F(B))\bigr)
 =\nu_F(\phi_F(B))=\mu_F(B)
\]
for every finite $F\subset I$ and Borel set $B\subset\RR^F$.

For uniqueness, assign to each cylinder $\pi_F^{-1}(B)\subset\RR^I$ the weight $\mu_F(B)$. This weight is countably subadditive because the probability measure just constructed extends it. The common-coordinate argument with $H=F\cup G$ verifies the hypothesis of Proposition \ref{p:method-one}(M), and $\RR^I$ itself is a cylinder of weight $1$. Lemma \ref{l:uniqueness} therefore gives uniqueness on the product $\sigma$-algebra.
\end{proof}

\section{Mass distributions and Frostman's lemma}\label{s:mass-distribution}

Suppose that you want to define a probability measure $\mu$ on a compact metric space $\XX$, e.g.~normalized Lebesgue measure on $[0,1]^n$. The following inductive procedure is quite natural, but has pitfalls. Put $\Delta_0=\{\XX\}$ and assign $w(\XX)=1$. Suppose that for some $k\geq 0$ you have specified the value of $w$ on a finite partition $\Delta_k$ of $\XX$ into nonempty sets of diameter at most $2^{-k}\diam\XX$. Partition each $Q\in\Delta_k$ into finitely many nonempty sets $\Child(Q)$ of diameter at most $2^{-(k+1)}\diam\XX$, and distribute the mass $w(Q)$ to the children of $Q$ in any way such that
\begin{equation}\label{w-sum}
 w(Q)=\sum_{R\in\Child(Q)}w(R).
\end{equation}
Put $\Delta_{k+1}=\bigcup_{Q\in\Delta_k}\Child(Q)$ and $\mathcal T=\bigcup_{k=0}^\infty\Delta_k$. By Lemma \ref{l:method-one},
\begin{equation}\label{out-A}
 \mu(A)=\inf\left\{\sum_{i=1}^\infty w(Q_i):Q_i\in\mathcal T\text{ and }A\subset\bigcup_{i=1}^\infty Q_i\right\}\quad\text{for all }A\subset\XX
\end{equation}
is an outer measure on $\XX$, and using \eqref{w-sum} one can show that Borel sets are $\mu$ measurable (see Theorem \ref{t:existence}). The point is that the finite conservation law \eqref{w-sum} does not by itself solve the extension problem: it is not at all clear and may not be true that $\mu(Q)=w(Q)$ for $Q\in\mathcal T$, or even that $\mu(\XX)=1$. Example \ref{ex:not-tame} shows that $\mu(\XX)=0$ is possible. Theorem \ref{t:existence} characterizes when the continuity principle upgrades this finite information to the countable subadditivity needed for faithful extension, without taking limits of measures.

\begin{definition}[nested partitions]\label{d:nested-partitions}
Let $\XX$ be a proper metric space. A sequence $(\Delta_k)_{k\geq0}$ of families of subsets of $\XX$ is a \emph{sequence of nested partitions} if the following conditions hold.
\begin{enumerate}
\item Every $\Delta_k$ is a partition of $\XX$ into nonempty bounded Borel sets.
\item Every set in $\Delta_{k+1}$ is contained in a set in $\Delta_k$.
\item Every bounded set in $\XX$ intersects only finitely many sets in $\Delta_k$ for every $k\geq 0$.
\item $\sup_{Q\in\Delta_k}\diam Q\to0$ as $k\to\infty$.
\end{enumerate}
\end{definition}

Conditions (3) and (4) imply that every bounded set in $\XX$ is totally bounded. Hence a complete metric space that admits a sequence of nested partitions is proper.

Let $(\Delta_k)_{k\geq0}$ be a sequence of nested partitions. We call the sets in $\mathcal T:=\bigcup_{k\geq0}\Delta_k$ \emph{cubes}. By (3), every $\Delta_k$ is countable. For $Q\in\Delta_k$ and $n\geq0$, let $\Child^n(Q)$ be the finite set of cubes in $\Delta_{k+n}$ contained in $Q$, and put $\Child(Q):=\Child^1(Q)$. If a set $Q$ belongs to both $\Delta_j$ and $\Delta_k$ with $j<k$, then $Q$ belongs to $\Delta_i$ for all $j\leq i\leq k$, and $\Child(Q)=\{Q\}$ at the levels $j\leq i<k$. The boundary of a cube $Q$ is denoted by $\partial Q$, and for $Q\in\Delta_k$ and $n\geq0$, we put
\[
 \Collar^n(Q):=\{R\in\Child^n(Q):\overline R\cap\partial Q\neq\emptyset\}.
\]
We call $\Collar^n(Q)$ the \emph{collar} of $Q$ at depth $n$. It consists of the descendants of $Q$ at depth $n$ whose closures meet the boundary of $Q$.
When a set $Q$ occurs at more than one level, the symbols $\Child^n(Q)$ and $\Collar^n(Q)$ are understood relative to the specified occurrence $Q\in\Delta_k$. The mass $w(Q)$ below depends only on the underlying set.

We will repeatedly use the following consequence of (3). At each fixed level $k$, the family $\{\overline Q:Q\in\Delta_k\}$ is locally finite. Indeed, if a bounded set $A$ meets $\overline Q$, then $Q$ meets the bounded $1$-neighborhood of $A$, and only finitely many cubes in $\Delta_k$ can do so. Consequently, only finitely many level-$k$ closures meet a fixed bounded set, and the union of any subfamily of level-$k$ closures is closed.

\begin{definition}[mass distributions]\label{d:mass-distribution}
Let $(\Delta_k)_{k\geq0}$ be a sequence of nested partitions of a proper metric space $\XX$. A \emph{mass distribution} is a function $w:\mathcal T\to[0,\infty)$ such that
\begin{equation}\label{mass-additivity}
 w(Q)=\sum_{R\in\Child(Q)}w(R)\quad\text{for all }Q\in\Delta_k\text{ and }k\geq0.
\end{equation}
A mass distribution $w$ is \emph{tame} if
\begin{equation}\label{tame-condition}
 \lim_{n\to\infty}\sum_{R\in\Collar^n(Q)}w(R)=0\quad\text{for every occurrence }Q\in\Delta_k.
\end{equation}
Given a mass distribution $w$, put $w(\emptyset):=0$ and let $\mu_w$ be the Method I outer measure with weight $w$ on $\mathcal E:=\mathcal T\cup\{\emptyset\}$.
\end{definition}

If a cube $Q$ belongs to two consecutive levels, then $\Child(Q)=\{Q\}$ at the lower level, and \eqref{mass-additivity} reads $w(Q)=w(Q)$. Thus, \eqref{mass-additivity} is consistent with regarding $w$ as a function of the underlying set. Iterating \eqref{mass-additivity} gives $w(Q)=\sum_{R\in\Child^n(Q)}w(R)$ for all $Q\in\Delta_k$ and $n\geq 0$.

\begin{theorem}[existence of measures from mass distributions]\label{t:existence}
Let $(\Delta_k)_{k\geq0}$ be a sequence of nested partitions of a proper metric space $\XX$, and let $w$ be a mass distribution. Then $\mu_w$ is a Radon measure on $\XX$. Moreover, $w$ is tame if and only if
\begin{equation}\label{tame-values}
 \mu_w(Q)=w(Q)\quad\text{and}\quad\mu_w(\partial Q)=0\quad\text{for all }Q\in\mathcal T.
\end{equation}
\end{theorem}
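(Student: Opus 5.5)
We would prove the Radon property first, using Proposition \ref{p:method-one}(M) and (OR). Then we would prove the two directions of the tameness equivalence: the forward one through the continuity principle, Lemma \ref{l:continuity}, and the converse through continuity from above.

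\emph{Radon property.}
\begin{enumerate}
\item The family $\mathcal T$ is nested: two cubes are either disjoint or one contains the other.
\item We verify (M) for cubes $E,E'$. If $E\cap E'=\emptyset$ or $E'\subset E$, take $F_0=E'\cap E$ and $F_1=E'\setminus E$, each of which is empty or a cube, so the weights add up to at most $w(E')$.
\item If $E\subsetneq E'$ with $E'\in\Delta_j$ and $E\in\Delta_k$, then $E'\setminus E$ is the union of the other cubes in $\Child^{k-j}(E')$. The iterated identity $w(E')=\sum_{R\in\Child^{k-j}(E')}w(R)$ gives equality of weights.
\item Hence every cube is $\mu_w$ measurable. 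Every open set is a countable union of cubes, because cube diameters tend to $0$ and each $\Delta_k$ is countable. So Borel sets are measurable.
\item Since $\XX\in\mathcal T_\sigma$ (take $\Delta_0$), (OR) gives Borel regularity.
\item A bounded set meets only finitely many cubes of $\Delta_0$, each of finite weight, so $\mu_w$ is finite on bounded sets.
\end{enumerate}

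\emph{Tame implies \eqref{tame-values}.} We check the three conditions of Lemma \ref{l:continuity} on $\mathcal E$.

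For finite subadditivity, suppose $Q\subset Q_1\cup\dots\cup Q_m$. Any $Q_i\supset Q$ already gives $w(Q_i)\geq w(Q)$ by iterated additivity. Otherwise, pass to a common deep level $N$. By nestedness, each cube of $\Child^{N}(Q)$ lies in some $Q_i$. Iterated additivity for $Q$ and for the $Q_i$ then gives $w(Q)\leq\sum_i w(Q_i)$.

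For compact inner approximation of $Q\in\Delta_k$, let $K$ be the union of the closures of the finitely many non-collar cubes $R\in\Child^n(Q)$.
\begin{itemize}
\item Each such $\overline R$ is compact, since $\XX$ is proper.
\item Each $\overline R$ lies in $\overline Q$ and misses $\partial Q$, hence lies in the interior of $Q$. So $K\subset Q$.
\item $Q\setminus K$ is covered by $\Collar^n(Q)$, whose total weight is small for large $n$ by \eqref{tame-condition}.
\end{itemize}

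For open outer approximation, take
\[
U:=\XX\setminus\bigcup\bigl\{\overline R:R\in\Delta_{k+n},\ \overline R\cap\overline Q=\emptyset\bigr\}.
\]
\begin{itemize}
\item $U$ is open by local finiteness of level-$(k+n)$ closures, and $U\supset\overline Q$.
\item $U\setminus Q$ is covered by cubes $R\in\Delta_{k+n}$ with $R\not\subset Q$ and $\overline R\cap\overline Q\neq\emptyset$.
\item Such an $R$ lies in a level-$k$ cube $Q'\neq Q$, which is one of finitely many (local finiteness). A common point of $\overline R$ and $\overline Q$ cannot lie in the interior of $Q'$, because $Q'\cap Q=\emptyset$. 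So $R\in\Collar^n(Q')$.
\item Tameness for these finitely many $Q'$ makes the total weight small.
\end{itemize}

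Lemma \ref{l:continuity} then gives $\mu_w(Q)=w(Q)$. Finally, $\partial Q\subset U\setminus K$, so
\[
\mu_w(\partial Q)\leq\mu_w(U\setminus Q)+\mu_w(Q\setminus K)\leq 2\varepsilon,
\]
whence $\mu_w(\partial Q)=0$.

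\emph{\eqref{tame-values} implies tame.} Since cubes are disjoint and measurable with $\mu_w=w$ on them, $\sum_{R\in\Collar^n(Q)}w(R)=\mu_w(C_n)$, where $C_n:=\bigcup\Collar^n(Q)$.
\begin{itemize}
\item The sets $C_n$ decrease: if a child's closure meets $\partial Q$, so does its parent's.
\item They lie in $Q$, which has finite measure.
\item A point in every $C_n$ lies in cubes of diameter tending to $0$ whose closures meet $\partial Q$, so it belongs to the closed set $\partial Q$.
\end{itemize}
Continuity from above gives $\lim_n\mu_w(C_n)\leq\mu_w(\partial Q)=0$, which is \eqref{tame-condition}.

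The main obstacle is the open outer approximation. Cubes are not open, so $U$ must be built from complements of closures, and we must check that the leftover cubes are exactly collar cubes of finitely many neighbouring cubes.
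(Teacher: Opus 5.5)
Your proposal is correct and follows the paper's proof essentially step for step. It uses the same case analysis for (M) together with (OR) for the Radon property, the same compact set $K$ built from closures of non-collar descendants, and the same open set $U$ obtained by deleting the closures of level-$(k+n)$ cubes that miss $\overline Q$, with the leftover cubes identified as collar cubes of the finitely many level-$k$ neighbours. The converse is the same continuity-from-above argument on the decreasing collar unions.

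Two small points in the finite subadditivity step: the cubes into which you refine $Q$ should be the cubes of $\Delta_N$ contained in $Q$, i.e.\ $\Child^{N-k}(Q)$, and the preliminary case $Q_i\supset Q$ is unnecessary.
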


\begin{proof}
\emph{$\mu_w$ is a Radon measure.} By (1) and (3) in Definition \ref{d:nested-partitions}, every bounded set is covered by finitely many cubes in $\Delta_0$, and $\mu_w$ is finite on bounded sets. Let $Q,Q'\in\mathcal T$. If $Q'\subset Q$, take $F_0=Q'$ and $F_1=\emptyset$. If $Q'\cap Q=\emptyset$, take $F_0=\emptyset$ and $F_1=Q'$. Otherwise, $Q\subsetneq Q'$, and we take $F_0=Q$ and let $F_1,\dots,F_p$ be the other cubes at a level of $Q$ that are contained in $Q'$. In each case, the iterated form of \eqref{mass-additivity} gives $\sum_{j=0}^pw(F_j)=w(Q')$. The remaining cases with $\emptyset\in\mathcal E$ are immediate. By Proposition \ref{p:method-one}(M), every cube is $\mu_w$ measurable. Since $\Delta_0$ is a countable partition of $\XX$, $\XX\in\mathcal E_\sigma$. If $U\subset\XX$ is open and $x\in U$, then by (4) in Definition \ref{d:nested-partitions} some cube containing $x$ is contained in $U$. Hence $U$ is the union of the countably many cubes contained in $U$, and every Borel set is $\mu_w$ measurable. By Proposition \ref{p:method-one}(OR), every set $S\subset\XX$ is contained in a set $B\in\mathcal E_{\sigma\delta}$ with $\mu_w(B)=\mu_w(S)$, and $B$ is Borel.

\emph{\eqref{tame-values} implies tameness.} Let $Q\in\Delta_k$ and put $A_n:=\bigcup_{R\in\Collar^n(Q)}R$. If $R\in\Collar^{n+1}(Q)$, then the cube in $\Child^n(Q)$ containing $R$ belongs to $\Collar^n(Q)$. Hence $A_{n+1}\subset A_n$. If $x\in\bigcap_{n}A_n$, then for every $n$ the point $x$ lies in a cube $R\in\Delta_{k+n}$ with $\overline R\cap\partial Q\neq\emptyset$. Since $\diam\overline R=\diam R$, condition (4) shows that points of $\partial Q$ lie arbitrarily close to $x$. Hence $x\in\partial Q$, because $\partial Q$ is closed. The cubes are Borel sets, and $\mu_w$ is a Borel measure. By \eqref{tame-values} and continuity from above,
\[
 \lim_{n\to\infty}\sum_{R\in\Collar^n(Q)}w(R)=\lim_{n\to\infty}\mu_w(A_n)=\mu_w\Big(\bigcap_nA_n\Big)\leq\mu_w(\partial Q)=0.
\]

\emph{Tameness implies \eqref{tame-values}.} Assume that $w$ is tame. We verify the hypotheses of Lemma \ref{l:continuity}.

(1) Let $Q,Q_1,\dots,Q_l\in\mathcal T$ with $Q\subset Q_1\cup\dots\cup Q_l$, and choose $n$ larger than the levels of $Q,Q_1,\dots,Q_l$. Every cube in $\Delta_n$ contained in $Q$ meets some $Q_i$, and is therefore contained in $Q_i$. Hence
\[
 w(Q)=\sum_{R\in\Delta_n,\ R\subset Q}w(R)\leq\sum_{i=1}^l\sum_{R\in\Delta_n,\ R\subset Q_i}w(R)=\sum_{i=1}^lw(Q_i).
\]

Fix $Q\in\Delta_k$ and $\varepsilon>0$. By the local finiteness observation above, the set $\mathcal N(Q)$ of cubes $P\in\Delta_k$ with $P\neq Q$ and $\overline P\cap\overline Q\neq\emptyset$ is finite. By \eqref{tame-condition}, choose $n\geq 1$ such that
\[
 \sum_{R\in\Collar^n(Q)}w(R)\leq\varepsilon\quad\text{and}\quad\sum_{P\in\mathcal N(Q)}\sum_{R\in\Collar^n(P)}w(R)\leq\varepsilon.
\]

(2) Let $\mathcal F$ be the set of cubes $S\in\Delta_{k+n}$ with $S\cap Q=\emptyset$ and $\overline S\cap\overline Q\neq\emptyset$. This set is finite by local finiteness, since $\overline Q$ is bounded. Let $S\in\mathcal F$, let $P\in\Delta_k$ be the cube containing $S$, and let $y\in\overline S\cap\overline Q$. Then $P\neq Q$ and $y\in\overline P\cap\overline{\XX\setminus P}=\partial P$. Hence $P\in\mathcal N(Q)$ and $S\in\Collar^n(P)$. It follows that $\sum_{S\in\mathcal F}w(S)\leq\varepsilon$. By the same local finiteness observation, the union of the closures of the cubes $S\in\Delta_{k+n}$ with $\overline S\cap\overline Q=\emptyset$ is closed. Its complement $U$ is an open set containing $\overline Q$. Every point of $U\setminus Q$ lies in a cube $S\in\Delta_{k+n}$ with $S\cap Q=\emptyset$ and $\overline S\cap\overline Q\neq\emptyset$. Hence $U\setminus Q\subset\bigcup\mathcal F$.

(3) The family $\Child^n(Q)$ is finite by (3) in Definition \ref{d:nested-partitions}, since all of its cubes meet the bounded set $Q$. Let $K$ be the union of the closures of the cubes in $\Child^n(Q)\setminus\Collar^n(Q)$. Each closure is compact because $\XX$ is proper, so $K$ is compact. Every such closure lies in $\overline Q\setminus\partial Q$. Hence $K\subset Q$ and $Q\setminus K\subset\bigcup\Collar^n(Q)$, which has total weight at most $\varepsilon$.

By Lemma \ref{l:continuity}, $\mu_w(Q)=w(Q)$ for all $Q\in\mathcal T$. Finally, $\partial Q\cap Q\subset Q\setminus K$ and $\partial Q\setminus Q\subset U\setminus Q$. Thus, $\mu_w(\partial Q)\leq 2\varepsilon$ for every $\varepsilon>0$.
\end{proof}

\begin{example}\label{ex:not-tame}
Tameness cannot be dropped. Let $\XX=[0,1]$ and $\Delta_0=\{\XX\}$, and for $k\geq1$ let $\Delta_k$ consist of $[0,2^{-k}]$ and the intervals $(i2^{-k},(i+1)2^{-k}]$ with $1\leq i<2^k$. Define $w(\XX)=1$, $w((1/2,1/2+2^{-k}])=1$ for every $k\geq1$, and $w(Q)=0$ for all other cubes. Then $w$ is a mass distribution. Every point of $[0,1/2]$ lies in the cube $[0,1/2]$ of weight $0$. If $x>1/2$ and $2^{1-k}<x-1/2$, then the cube in $\Delta_k$ containing $x$ has left endpoint greater than $1/2$ and weight $0$. Hence $\mu_w(\XX)=0\neq w(\XX)$. Here $\partial(1/2,1]=\{1/2\}$, the cube $(1/2,1/2+2^{-n-1}]$ belongs to $\Collar^n((1/2,1])$ for all $n$, and $w$ is not tame.
\end{example}

For $A,B\subset\XX$, the \emph{gap} between $A$ and $B$ is
\[
 \gap(A,B):=\inf\{d(a,b):a\in A,\ b\in B\},
\]
where $\inf\emptyset=\infty$.

\begin{corollary}[a tameness criterion]\label{c:tame}
Let $w$ be a mass distribution for a sequence of nested partitions of a proper metric space $\XX$. Suppose that there is $\eta>0$ such that, for every occurrence $Q\in\Delta_k$, there is a family $\mathcal I(Q)\subset\Child(Q)$ with
\[
 \gap\Big(\bigcup\mathcal I(Q),\XX\setminus Q\Big)>0\quad\text{and}\quad\sum_{R\in\mathcal I(Q)}w(R)\geq\eta\,w(Q).
\]
Then
\begin{equation}\label{tame-geometric}
 \sum_{R\in\Collar^n(Q)}w(R)\leq(1-\eta)^n w(Q)\quad\text{for all }Q\in\mathcal T\text{ and }n\geq 0.
\end{equation}
In particular, $w$ is tame, and \eqref{tame-values} holds.
\end{corollary}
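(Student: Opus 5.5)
Induction on $n$, with the key claim that a cube in the collar at depth $n+1$ must descend from a collar cube at depth $n$ and must avoid the interior family of its parent. Once \eqref{tame-geometric} holds, tameness follows because $(1-\eta)^n w(Q)\to 0$ ($w(Q)<\infty$ and $\eta>0$; note $\eta\le1$ is forced whenever some $w(Q)>0$, and otherwise everything is trivial). Theorem \ref{t:existence} then gives \eqref{tame-values}.

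The case $n=0$ reads $\sum_{R\in\Collar^0(Q)}w(R)\le w(Q)$, which is immediate since $\Collar^0(Q)\subset\Child^0(Q)=\{Q\}$. For the inductive step, fix an occurrence $Q\in\Delta_k$ and let $R\in\Collar^{n+1}(Q)$. Let $P\in\Child^n(Q)$ be the cube containing $R$. As in the proof of Theorem \ref{t:existence}, $\overline R\subset\overline P$, so $\overline P\cap\partial Q\ne\emptyset$ and $P\in\Collar^n(Q)$.

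The new point is to show $R\notin\mathcal I(P)$, where $P$ is regarded as an occurrence in $\Delta_{k+n}$ and $R\in\Child(P)$. Pick $y\in\overline R\cap\partial Q$. Since $y\in\partial Q$, every neighborhood of $y$ meets $\XX\setminus Q\subset\XX\setminus P$, so $d(y,\XX\setminus P)=0$. Since $y\in\overline R$, if $R\in\mathcal I(P)$ then points of $\bigcup\mathcal I(P)$ come arbitrarily close to $y$. Hence $\gap(\bigcup\mathcal I(P),\XX\setminus P)=0$, contradicting the hypothesis. So $R\in\Child(P)\setminus\mathcal I(P)$.

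Summing over parents and using \eqref{mass-additivity} together with the hypothesis,
\[
 \sum_{R\in\Collar^{n+1}(Q)}w(R)\le\sum_{P\in\Collar^n(Q)}\Bigl(w(P)-\sum_{R\in\mathcal I(P)}w(R)\Bigr)\le(1-\eta)\sum_{P\in\Collar^n(Q)}w(P).
\]
The induction hypothesis then gives the bound $(1-\eta)^{n+1}w(Q)$.

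The main obstacle is bookkeeping when a set occurs at several levels. Here $\Child(P)$ may equal $\{P\}$, and $\mathcal I(P)$ is chosen relative to the occurrence $P\in\Delta_{k+n}$. The argument uses only that $R\in\Child(P)$ for that occurrence, so it stays consistent with the occurrence conventions. A second small check is that $\partial Q\subset\overline{\XX\setminus Q}$, which holds by the definition of the boundary.
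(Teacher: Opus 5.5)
Your proof is correct and follows essentially the same route as the paper: the same gap argument shows that interior children have closures missing $\partial Q$, mass additivity gives the factor $1-\eta$ at each level, and $w\not\equiv0$ forces $\eta\le1$. The only difference is bookkeeping. You run the induction directly on the collars via $\Collar^{n+1}(Q)\subset\bigcup_{P\in\Collar^n(Q)}(\Child(P)\setminus\mathcal I(P))$, whereas the paper bounds the auxiliary family $\mathcal A_n$ and then proves $\Collar^n(Q)\subset\mathcal A_n$.
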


A preliminary version of Corollary \ref{c:tame} appears in Badger and Schul \cite[Lemma 4.14]{BS4}, for systems of metric $b$-adic cubes with $b>5$, where $\mathcal I(Q)$ consists of the central child of $Q$. There, the countable subadditivity of the weight is verified directly by a compactness argument, which Lemma \ref{l:continuity} isolates.

\begin{proof}
If $w\equiv0$, the conclusion is immediate. Otherwise the hypothesis forces $\eta\leq1$, and we assume $0<\eta\leq1$. Fix $Q\in\mathcal T$. Put $\mathcal A_0:=\{Q\}$ and $\mathcal A_{n+1}:=\bigcup_{P\in\mathcal A_n}(\Child(P)\setminus\mathcal I(P))$ for all $n\geq 0$. Then $\sum_{R\in\mathcal A_n}w(R)\leq(1-\eta)^nw(Q)$ by induction. Let $R\in\Child^n(Q)\setminus\mathcal A_n$. Then there are $0\leq i<n$ and cubes $P\in\Child^i(Q)$ and $P'\in\mathcal I(P)$ with $R\subset P'$. Since $\XX\setminus Q\subset\XX\setminus P$, we have $\gap(R,\XX\setminus Q)\geq\gap(P',\XX\setminus P)>0$. Hence $\overline R\cap\partial Q=\emptyset$, and $R\notin\Collar^n(Q)$. Thus, $\Collar^n(Q)\subset\mathcal A_n$, which gives \eqref{tame-geometric}. Theorem \ref{t:existence} completes the proof.
\end{proof}

If a cube $Q$ belongs to two consecutive levels, then $\Child(Q)=\{Q\}$ at the lower level. At that occurrence, the hypothesis of Corollary \ref{c:tame} holds only if $w(Q)=0$ or $\gap(Q,\XX\setminus Q)>0$. Thus, the criterion is suited to nested partitions in which cubes of positive mass do not repeat across levels, such as the $b$-adic cubes below.

\begin{example}[$b$-adic cubes]\label{ex:b-adic}
Fix an integer $b\geq3$ and let
\[
 \Delta_k=\left\{b^{-k}\bigl(m+(0,1]^n\bigr):m\in\ZZ^n\right\},
 \qquad k\geq0.
\]
These are nested partitions of $\RR^n$.  Every cube $Q$ has a child
$Q^\circ$ whose closure is contained in the interior of $Q$. When $b$ is
odd, one may take the central child.  If a mass distribution satisfies
$w(Q^\circ)\geq\eta w(Q)$ for some $\eta>0$ independent of $Q$, then
Corollary \ref{c:tame} applies with $\mathcal I(Q)=\{Q^\circ\}$.  Hence the
Method I measure has the prescribed masses and gives zero mass to every cube
boundary.  The same conclusion holds if the fixed fraction $\eta$ is carried
by any collection of children separated from $\RR^n\setminus Q$.
\end{example}

\begin{corollary}[uniqueness]\label{c:uniqueness}
Let $(\Delta_k)_{k\geq0}$ be a sequence of nested partitions of a proper metric space $\XX$, and let $w$ be a tame mass distribution. If $\nu$ is a Radon measure on $\XX$ with $\nu(Q)=w(Q)$ for all $Q\in\mathcal T$, then $\nu=\mu_w$.
\end{corollary}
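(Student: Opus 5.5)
The plan is to apply Lemma \ref{l:uniqueness} to the family $\mathcal E=\mathcal T\cup\{\emptyset\}$ with the weight $w$, and then to pass from Borel sets to arbitrary sets using Borel regularity.

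\emph{Step 1: the hypotheses of Lemma \ref{l:uniqueness} hold.} Because $w$ is tame, Theorem \ref{t:existence} gives $\mu_w(Q)=w(Q)$ for every cube. By Remark \ref{r:extension}, this means $w$ is countably subadditive on $\mathcal E$, which is the hypothesis of Proposition \ref{p:method-one}(E). The hypothesis of Proposition \ref{p:method-one}(M) was checked in the first part of the proof of Theorem \ref{t:existence}, using the sets $F_0,F_1,\dots,F_p$ built from cubes at a common level. The weight $w$ is $\sigma$-finite: $\Delta_0$ is a countable partition of $\XX$, and $w$ is finite-valued.

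\emph{Step 2: identify the generated $\sigma$-algebra.} I claim $\cM(\mathcal E)$ is the Borel $\sigma$-algebra. Cubes are Borel by Definition \ref{d:nested-partitions}(1). Conversely, the proof of Theorem \ref{t:existence} shows that every open set is a countable union of cubes, using condition (4) and the countability of each $\Delta_k$. With $\cM(\mathcal E)$ equal to the Borel sets, Lemma \ref{l:uniqueness} says that $\mu_w$ restricted to the Borel sets is the unique measure there that agrees with $w$ on cubes. Since $\nu$ is Radon, every Borel set is $\nu$ measurable, so $\nu$ restricts to a Borel measure that agrees with $w$ on $\mathcal T$. Hence $\nu(B)=\mu_w(B)$ for every Borel set $B$.

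\emph{Step 3: extend to all subsets.} Both $\nu$ and $\mu_w$ are Borel regular; for $\mu_w$ this is part of Theorem \ref{t:existence}. Fix $S\subset\XX$ and choose Borel sets $B_1,B_2\supset S$ with $\nu(B_1)=\nu(S)$ and $\mu_w(B_2)=\mu_w(S)$. Put $B=B_1\cap B_2$. By monotonicity, $\nu(B)=\nu(S)$ and $\mu_w(B)=\mu_w(S)$. Step 2 gives $\nu(B)=\mu_w(B)$, so $\nu(S)=\mu_w(S)$.

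\emph{Main obstacle.} The only point needing care is Step 2, namely that cubes generate exactly the Borel sets. Tameness is used only to obtain countable subadditivity through Theorem \ref{t:existence}; without it, Lemma \ref{l:uniqueness} cannot be invoked, as Example \ref{ex:not-tame} shows.
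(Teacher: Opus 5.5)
Your proof is correct and follows the paper's argument step for step. You get countable subadditivity from Theorem \ref{t:existence} via Remark \ref{r:extension}, hypothesis (M) and $\sigma$-finiteness from the cube structure, Lemma \ref{l:uniqueness} on the $\sigma$-algebra generated by the cubes (which contains the open sets by condition (4)), and Borel regularity of both outer measures to reach arbitrary subsets.

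One correction to your closing comment. Since $\nu$ is itself an outer measure agreeing with $w$ on $\mathcal E$, Remark \ref{r:extension} already gives countable subadditivity of $w$ directly from the hypothesis, so tameness is not actually needed for this uniqueness argument. Example \ref{ex:not-tame} does not show otherwise, because there no Radon $\nu$ with the prescribed values exists at all: the cubes $(1/2,1/2+2^{-k}]$ of weight $1$ decrease to $\emptyset$.
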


\begin{proof}
The family $\mathcal E=\mathcal T\cup\{\emptyset\}$ satisfies the hypothesis of Proposition \ref{p:method-one}(M), as in the proof of Theorem \ref{t:existence}, and $w$ is countably subadditive because $\mu_w$ extends $w$ (Remark \ref{r:extension}). The countably many cubes in $\Delta_0$ cover $\XX$ and have finite weight. By Lemma \ref{l:uniqueness}, $\nu=\mu_w$ on the $\sigma$-algebra generated by $\mathcal T$, which contains every open set by (4) in Definition \ref{d:nested-partitions}, and hence every Borel set. Since $\nu$ and $\mu_w$ are Borel regular, they agree on every subset of $\XX$.
\end{proof}

\begin{example}[completeness is needed for prescribed values]\label{ex:rationals}
Let $\XX=\QQ$ with the usual metric, and let $\Delta_k$ consist of the sets $\QQ\cap(i3^{-k},(i+1)3^{-k}]$ with $i\in\ZZ$. These partitions satisfy (1)--(4) in Definition \ref{d:nested-partitions}, but $\QQ$ is not proper. Put $w(Q)=3^{-k}$ for $Q\in\Delta_k$. Only the two extreme descendants of a cube can meet its boundary, and hence $w$ is tame. On the other hand, every rational number lies in cubes of arbitrarily small weight. Hence $\mu_w(\QQ)=0$, while $w(\QQ\cap(0,1])=1$, as in Example \ref{ex:rational-intervals}. In the proof of Theorem \ref{t:existence}, the argument breaks down at condition (3) of Lemma \ref{l:continuity}, because closures of cubes in $\QQ$ are not compact.
\end{example}

\subsection{Frostman's lemma}\label{ss:frostman}

The mass-distribution theorem gives a direct proof of Frostman's lemma when
the exponent is larger than the dimension of the boundaries in a cubical
partition.  We formulate the construction using net content, with side length
rather than diameter as the cost of a cube.  The construction begins with
infinite-depth covering contents and distributes mass forward through the
cubical tree.  Thus, it does not pass through measures on finite trees or use
weak compactness of measures.

For $s>0$ and $E\subset\RR^n$, recall that
\[
 \mathcal H^s_\infty(E)
 :=\inf\left\{\sum_{i=1}^\infty(\diam U_i)^s:
 E\subset\bigcup_{i=1}^\infty U_i\right\}.
\]

A \emph{cube} in $\RR^n$ is a product $Q=J_1\times\dots\times J_n$ of bounded intervals $J_1,\dots,J_n$ of a common length $\ell>0$, each of which may be open, closed, or half-open. We write $\side Q:=\ell$. Thus, $\diam Q=\sqrt n\,\side Q$.

\begin{definition}[nets and net content]\label{d:nets}
Let $n\geq1$ and $m\geq2$ be integers, and let $t>0$.  An \emph{$m$-adic
net} $\Delta$ \emph{at initial scale $t$} is a family of cubes in $\RR^n$
such that
\begin{enumerate}
\item every cube $Q\in\Delta$ has side length $tm^{-k}$ for some integer
$k\geq0$;
\item the family $\Delta_k$ of cubes in $\Delta$ with side length $tm^{-k}$
is pairwise disjoint;
\item for every $Q\in\Delta_k$ there is a family
$\Child(Q)\subset\Delta_{k+1}$ such that
$Q=\bigcup\Child(Q)$, and
$\Delta_{k+1}=\bigcup_{Q\in\Delta_k}\Child(Q)$.
\end{enumerate}
We call $\Delta_k$ the \emph{$k$-th level} of $\Delta$.  We do not require
the cubes in a net to have a fixed type. In particular, the choices of
boundary faces may vary from cube to cube.  A $2$-adic net is \emph{dyadic}
and a $3$-adic net is \emph{triadic}.  For $Q\in\Delta$, let $\Delta(Q)$
denote the family consisting of $Q$ and all of its descendants.

For $s\geq0$ and $E\subset\RR^n$, the \emph{$s$-dimensional $\Delta$
content} is
\begin{equation}\label{net-content}
 \mathcal N^{\Delta,s}_\infty(E)
 :=\inf\left\{\sum_{i=1}^\infty(\side Q_i)^s:
 E\subset\bigcup_{i=1}^\infty Q_i,\quad Q_i\in\Delta\right\},
\end{equation}
where the infimum is $\infty$ if no such cover exists.
\end{definition}

Thus, $\mathcal N^{\Delta,s}_\infty$ is the Method I outer measure obtained
from the net with weight $Q\mapsto(\side Q)^s$.

The \emph{center} of a cube $Q=J_1\times\dots\times J_n$ is the point whose
$i$th coordinate is the midpoint of $J_i$. Two elementary counting facts about
a level $\Delta_k$ of an $m$-adic net, with $\ell=tm^{-k}$, will be used
below. First, the centers $c$ and $c'$ of distinct cubes in $\Delta_k$ satisfy
$\max_i|c_i-c_i'|\geq\ell$, since otherwise the point $(c+c')/2$ lies in both
cubes. An open cube of side less than $3\ell$ is the union of $3^n$ cubes of
side less than $\ell$, and each of these contains at most one center. Hence
\begin{equation}\label{center-count}
 \text{an open cube of side less than }3\ell\text{ contains the centers of at
 most }3^n\text{ cubes in }\Delta_k.
\end{equation}
Second, the children of a cube $Q\in\Delta_k$ are disjoint cubes of side
$\ell/m$ with union $Q$. Comparing Lebesgue measure shows that $Q$ has exactly
$m^n$ children, and hence $m^{jn}$ descendants in $\Delta_{k+j}$.

\begin{theorem}[Frostman's lemma for $s>n-1$]\label{t:frostman}
Let $n\geq1$ and $m\geq2$ be integers, and let $n-1<s\leq n$. Let $\Delta$ be
an $m$-adic net at initial scale $t>0$, and let $E\subset\RR^n$ be a compact
set contained in a cube $Q_0\in\Delta_0$. Then there is a finite Radon measure
$\mu$ supported on $E$ such that
\begin{align}
 \mu(Q)&\leq(\side Q)^s
 &&\text{for every }Q\in\Delta, \label{frostman-net-cubes}\\
 \mu(E)&=\mathcal N^{\Delta,s}_\infty(E)
 \geq n^{-s/2}\mathcal H^s_\infty(E), \label{frostman-net-mass}\\
 \mu(B(x,r))&\leq 3^nm^sr^s
 &&\text{for every }x\in\RR^n\text{ and }r>0. \label{frostman-ball-bound}
\end{align}
Consequently, $\nu:=3^{-n}m^{-s}\mu$ satisfies the usual Frostman bound
$\nu(B(x,r))\leq r^s$ together with
\[
 \nu(E)\geq \frac{1}{3^nm^sn^{s/2}}\,\mathcal H^s_\infty(E).
\]
\end{theorem}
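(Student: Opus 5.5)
The plan is to build a mass distribution on the net by pushing the content $\mathcal N^{\Delta,s}_\infty(E\cap\cdot)$ forward through the tree. Then Theorem \ref{t:existence} turns it into a Radon measure, and the hypothesis $s>n-1$ is used to verify tameness. Write $c(Q):=\mathcal N^{\Delta(Q),s}_\infty(E\cap Q)$ for the optimal content of $E\cap Q$ using only cubes of $\Delta(Q)$. Since a cover of $E\cap Q$ by arbitrary net cubes can be replaced by the cubes of the cover contained in $Q$, together with $Q$ itself in place of any ancestor, we get $c(Q)\leq(\side Q)^s$. The same exchange gives the recursion
\[
 c(Q)=\min\Bigl\{(\side Q)^s,\ \sum_{R\in\Child(Q)}c(R)\Bigr\}.
\]
Define $w(Q_0):=c(Q_0)=\mathcal N^{\Delta,s}_\infty(E)$, and set $w=0$ on the other level-$0$ cubes. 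Inductively, given $w(Q)\leq c(Q)\leq\sum_{R}c(R)$, distribute $w(Q)$ among the children by $w(R):=w(Q)\,c(R)/\sum_{R'}c(R')$. Then $w(R)\leq c(R)\leq(\side R)^s$, mass is conserved, and cubes whose closure misses $E$ receive zero mass. To fit Definition \ref{d:nested-partitions}, I would complete $\Delta$ to a partition of $\RR^n$ at every level (the net is a family of half-open-type cubes), extend $w$ by zero, and check conditions (1)--(4).

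The main obstacle is tameness, because Corollary \ref{c:tame} is not available: the boundary children of $Q$ may carry all of the mass. This is where $s>n-1$ enters. The collar $\Collar^j(Q)$ consists of descendants at depth $j$ touching $\partial Q$, and there are at most $2n\,m^{j(n-1)}$ of them. Each has mass at most $(\side Q)^sm^{-js}$, so
\[
 \sum_{R\in\Collar^j(Q)}w(R)\leq 2n\,(\side Q)^s\,m^{-j(s-n+1)}\to0.
\]
For the neighbouring cubes $P$ added when completing the partition, the same bound holds, or the mass is zero. Theorem \ref{t:existence} then gives a Radon measure $\mu=\mu_w$ with $\mu(Q)=w(Q)$ for every cube $Q$. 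Hence \eqref{frostman-net-cubes} holds, and
\[
 \mu(E)\leq\mu(Q_0)=\mathcal N^{\Delta,s}_\infty(E).
\]

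For the support and for \eqref{frostman-net-mass}, I would argue as follows. If $x\notin E$, then for large $k$ the level-$k$ cubes whose closures meet a small ball around $x$ have closures disjoint from $E$. These cubes have $c=0$ and hence $w=0$, so $\mu(\RR^n\setminus E)=0$ and $\mu(E)=\mu(Q_0)=\mathcal N^{\Delta,s}_\infty(E)$. One point needs care: $c(R)$ should vanish when $E\cap R=\emptyset$, and the mass should go to cubes whose closure meets $E$; compactness of $E$ handles this. The comparison $\mathcal N^{\Delta,s}_\infty\geq n^{-s/2}\mathcal H^s_\infty$ follows from $\diam Q=\sqrt n\,\side Q$, since every net cover is an admissible cover for $\mathcal H^s_\infty$.

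For the ball bound \eqref{frostman-ball-bound}, first dispose of $r\geq t$: then $\mu(B(x,r))\leq\mu(Q_0)\leq t^s\leq r^s$. For $r<t$, choose $k$ with $tm^{-k-1}\leq 2r<tm^{-k}$ and set $\ell=tm^{-k}$. The ball lies in an open cube of side $2r<\ell$. Every level-$k$ cube meeting it has its center within the concentric open cube of side less than $3\ell$, so by \eqref{center-count} there are at most $3^n$ such cubes. Therefore
\[
 \mu(B(x,r))\leq3^n\ell^s\leq3^n(2mr)^s,
\]
which gives \eqref{frostman-ball-bound} up to the power of $2$; I would adjust $k$ (choosing it from $r$ rather than $2r$) so that the constant becomes exactly $3^nm^s$. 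The statement for $\nu$ is then immediate by scaling.
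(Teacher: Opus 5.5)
Your proposal is correct and follows the paper's proof essentially step for step: the localized contents $c(Q)$ with the recursion $c(Q)=\min\{(\side Q)^s,\sum_{R}c(R)\}$, proportional distribution of mass down the tree, completion to nested partitions of $\RR^n$ by zero-weight pieces, the collar count $2nm^{j(n-1)}$ for tameness, and the compactness argument for the support. The paper completes with the sets $G\setminus Q_0$ for grid cubes $G$, which you need because net cubes may have varying boundary faces and are not a half-open grid as your parenthetical suggests. For the ball bound, your proposed adjustment is exactly the paper's: take $k\geq0$ with $r<\ell=tm^{-k}\leq mr$, so the centers of level-$k$ cubes meeting $B(x,r)$ lie in an open cube of side $2r+\ell<3\ell$; this also covers $t/2\leq r<t$, where your $2r$-based choice has no admissible $k\geq0$.
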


Every compact set $E\subset\RR^n$ lies in a level-zero cube of some
$m$-adic net, for example a translated grid
$\{a+tm^{-k}(p+(0,1]^n):p\in\ZZ^n,\ k\geq0\}$ with $a$ and $t$ chosen
suitably. Thus,
Theorem \ref{t:frostman} contains the usual form of Frostman's lemma for
$s>n-1$. The hypothesis that $E$ lies in a single cube of $\Delta_0$ is used
only for the ball bound with $r\geq t$, where it gives $\mu(E)\leq t^s$. Net
content is additive across the cubes of $\Delta_0$, and without this
hypothesis, $\mathcal N^{\Delta,s}_\infty(E)$ can be much larger than
$(\diam E)^s$.

\begin{proof}
Every cube in $\Delta$ has an ancestor in $\Delta_0$, by (3) in Definition
\ref{d:nets}, and the cubes in $\Delta_0$ are pairwise disjoint. Hence a cube
in $\Delta$ that is not a descendant of $Q_0$ is disjoint from $Q_0$.

For every descendant $Q$ of $Q_0$, define the localized net content
\begin{equation}\label{localized-content}
 c(Q):=\mathcal N^{\Delta(Q),s}_\infty(E\cap Q).
\end{equation}
Since $Q$ itself is an admissible cover,
\begin{equation}\label{content-bound}
 0\leq c(Q)\leq(\side Q)^s.
\end{equation}

The localized contents satisfy the exact recursion
\begin{equation}\label{content-recursion}
 c(Q)=\min\left\{(\side Q)^s,
          \sum_{R\in\Child(Q)}c(R)\right\}.
\end{equation}
Indeed, covering by $Q$ gives the first upper bound. Taking nearly optimal
covers of $E\cap R$ for the finitely many children gives the second. For the
reverse inequality, consider any cover admitted in \eqref{localized-content}.
If it contains $Q$, its total is at least $(\side Q)^s$. Otherwise every
member of the cover is contained in a unique child $R$ of $Q$, and the
members belonging to $R$ cover $E\cap R$. The total is therefore at least
$\sum_{R\in\Child(Q)}c(R)$.

We now distribute the mass $c(Q_0)$ down the tree. Put $w(Q_0):=c(Q_0)$.
If $w(Q)\leq c(Q)$ has been defined and
$\sum_{R\in\Child(Q)}c(R)>0$, put
\begin{equation}\label{frostman-weights}
 w(R):=w(Q)\,\frac{c(R)}{\sum_{R'\in\Child(Q)}c(R')}
 \quad\text{for }R\in\Child(Q).
\end{equation}
If the denominator vanishes, put $w(R)=0$ for every child $R$. In that
case \eqref{content-recursion} gives $c(Q)=w(Q)=0$. Otherwise the child
weights sum to $w(Q)$, and \eqref{content-recursion} gives
$w(Q)/\sum_{R'}c(R')\leq1$. Thus, $w(R)\leq c(R)$ in either case. Iterating
defines $w$ on every descendant of $Q_0$, and
\begin{equation}\label{frostman-cube-bound}
 w(Q)\leq c(Q)\leq(\side Q)^s\quad\text{for every descendant }Q\text{ of }Q_0.
\end{equation}

To apply Theorem \ref{t:existence}, complete the descendants of $Q_0$ to
nested partitions of $\RR^n$. For $k\geq0$, let $\Delta_k'$ consist of the
level-$k$ descendants of $Q_0$ together with the nonempty sets
$G\setminus Q_0$, where $G=tm^{-k}(p+(0,1]^n)$ and $p\in\ZZ^n$. The level-$k$
descendants of $Q_0$ partition $Q_0$, and the grids at consecutive levels
refine one another. Every bounded set meets only finitely many sets in
$\Delta_k'$, and every set in $\Delta_k'$ has diameter at most $\sqrt n\,tm^{-k}$.
Thus, $(\Delta_k')_{k\geq0}$ is a sequence of nested partitions of $\RR^n$.
Give every set $G\setminus Q_0$ weight zero. Then $w$ is a mass distribution
for $(\Delta_k')_{k\geq0}$, and its only cubes of positive weight are
descendants of $Q_0$.

This mass distribution is tame. The sets $G\setminus Q_0$ have only
descendants of weight zero. Let $Q$ be a descendant of $Q_0$ and put
$\ell_j=m^{-j}\side Q$. If $R\in\Collar^j(Q)$, then $\overline R\subset\overline Q$
and $\overline R$ meets $\partial Q$. Some side interval of $\overline R$
therefore contains an endpoint of the corresponding side interval of
$\overline Q$, and $R$ lies in one of the $2n$ slabs of $\overline Q$ of width
$\ell_j$ adjacent to the faces of $Q$. These slabs have total Lebesgue measure
at most $2n\ell_j(\side Q)^{n-1}$, and the cubes in $\Collar^j(Q)$ are disjoint
with Lebesgue measure $\ell_j^n$. Hence $\Collar^j(Q)$ has at most
$2nm^{j(n-1)}$ members, and
\[
 \sum_{R\in\Collar^j(Q)}w(R)
 \leq 2nm^{j(n-1)}m^{-js}(\side Q)^s
 =2nm^{-j(s-n+1)}(\side Q)^s\longrightarrow0.
\]
Theorem \ref{t:existence} therefore gives a Radon measure $\mu$ on $\RR^n$
such that
\begin{equation}\label{frostman-prescribed-masses}
 \mu(Q)=w(Q)\quad\text{for every }Q\in\bigcup_{k\geq0}\Delta_k'.
\end{equation}

The measure $\mu$ is supported on $E$. The sets $G\setminus Q_0$ cover
$\RR^n\setminus Q_0$ and have measure zero. If $x\in Q_0\setminus E$, then a
sufficiently small descendant of $Q_0$ containing $x$ is disjoint from $E$,
because $E$ is compact. Such a descendant $R$ has $w(R)\leq c(R)=0$. Hence
$\RR^n\setminus E$ is a countable union of $\mu$ null sets. In particular,
every cube in $\Delta$ that is not a descendant of $Q_0$ has measure zero.
Together with \eqref{frostman-cube-bound} and
\eqref{frostman-prescribed-masses}, this proves \eqref{frostman-net-cubes}.
Since the cubes in $\Delta$ that meet $E$ are descendants of $Q_0$,
\[
 \mu(E)=\mu(Q_0)=c(Q_0)=\mathcal N^{\Delta,s}_\infty(E).
\]
If $E\subset\bigcup_iQ_i$ with $Q_i\in\Delta$, then
\[
 \mathcal H^s_\infty(E)
 \leq\sum_i(\diam Q_i)^s
 =n^{s/2}\sum_i(\side Q_i)^s.
\]
Taking the infimum over net covers proves \eqref{frostman-net-mass}.

It remains to pass from cubes to balls. If $0<r<t$, choose $k\geq0$ so that
$r<\ell:=tm^{-k}\leq mr$. The center of every cube in $\Delta_k$ that meets
$B(x,r)$ lies in the open cube with center $x$ and side $2r+\ell<3\ell$. By
\eqref{center-count}, at most $3^n$ cubes in $\Delta_k$ meet $B(x,r)$. Since
$\mu(\RR^n\setminus Q_0)=0$ and the level-$k$ descendants of $Q_0$ partition
$Q_0$, \eqref{frostman-net-cubes} gives
\[
 \mu(B(x,r))\leq3^n\ell^s\leq3^nm^sr^s.
\]
If $r\geq t$, then
$\mu(B(x,r))\leq\mu(E)\leq(\side Q_0)^s=t^s\leq r^s$.
This proves \eqref{frostman-ball-bound} and completes the proof.
\end{proof}

\begin{remark}[all exponents]\label{r:frostman-codimension}
The restriction $s>n-1$ is used only to prove tameness in $\RR^n$. For every
$0<s\leq n$, keep the same net, localized contents, and weights, and work
first on the compact ultrametric space $\Sigma$ of descending chains
$Q_0\supset Q_1\supset\cdots$ with $Q_k\in\Delta_k$, with distance $m^{-k}$
when $k$ is the first index at which two chains differ. The cylinders $[Q]$ of
chains through $Q$ are compact and open, and hence have empty boundary. The
weights $w(Q)$ therefore form a tame mass distribution on $\Sigma$, and
Theorem \ref{t:existence} gives a finite measure $\widetilde\mu$ with
$\widetilde\mu([Q])=w(Q)$. Let $\pi:\Sigma\to\overline{Q_0}$ send a chain to the
unique point of $\bigcap_k\overline{Q_k}$, and let $\mu_0=\pi_\#\widetilde\mu$
be the image of $\widetilde\mu$ under the continuous map $\pi$, that is,
$\mu_0(A)=\widetilde\mu(\pi^{-1}(A))$ for all $A\subset\RR^n$. As in the proof
of Theorem \ref{t:frostman}, $\mu_0$ is a finite Borel measure concentrated on
$E$, and $\mu_0(E)=w(Q_0)=\mathcal N^{\Delta,s}_\infty(E)$.

The only new point is boundary multiplicity. If a chain has $\pi$-image in a
set $A$, then its cube at level $k$ has closure meeting $A$. When $A\in\Delta_k$,
or when $A=B(x,r)$ with $r<\ell=tm^{-k}$, the centers of these cubes lie in an
open cube of side less than $3\ell$, and \eqref{center-count} bounds their
number by $3^n$. Hence $\mu:=3^{-n}\mu_0$ satisfies
\begin{equation}\label{frostman-all-exponents}
 \mu(Q)\leq(\side Q)^s\quad\text{for all }Q\in\Delta,
 \qquad
 \mu(E)=3^{-n}\mathcal N^{\Delta,s}_\infty(E)
 \geq3^{-n}n^{-s/2}\mathcal H^s_\infty(E),
\end{equation}
and choosing $k$ with $r<tm^{-k}\leq mr$ gives $\mu(B(x,r))\leq m^sr^s$ for
$0<r<t$. The case $r\geq t$ follows from the total-mass bound. Thus, the usual
Frostman conclusion holds for every $0<s\leq n$, at the cost of the factor
$3^{-n}$.

For a related use of an ultrametric passage in metric-cube constructions, see
K\"aenm\"aki, Rajala, and Suomala \cite{KRS-cubes}. For tree proofs phrased
in terms of flows and cutsets, see Lyons \cite{Lyons1990} and Bishop and Peres
\cite[Chapter 3]{BishopPeres}.
\end{remark}

\subsection*{AI use disclosure}

The author used GPT-5.6 Sol (OpenAI) and Claude Opus 5 and Claude Opus 5.5 (Anthropic) during the preparation of this manuscript. These tools were used to assist with drafting and revising prose and \LaTeX{} source, comparing alternative formulations, checking mathematical arguments, and organizing references for subsequent verification. The author reviewed the resulting text and proofs, verified the mathematical content, and takes responsibility for the manuscript.

\providecommand{\bysame}{\leavevmode\hbox to3em{\hrulefill}\thinspace}
\providecommand{\MR}{\relax\ifhmode\unskip\space\fi MR }
\providecommand{\MRhref}[2]{%
  \href{http://www.ams.org/mathscinet-getitem?mr=#1}{#2}
}
\providecommand{\href}[2]{#2}

\end{document}